\theoremstyle{definition}
\newtheorem{theorem}{Theorem}[section]
\newtheorem{lemma}{Lemma}[section]
\newcommand\scalemath[2]{\scalebox{#1}{\mbox{\ensuremath{\displaystyle#2}}}}
\newcommand{\mesh}{\mathbb{T}} 
\newcommand{\cell}{\kappa}
\newcommand{\meshfaces}{\mathbb{F}} 
\newcommand{\face}{f}
\newcommand{\ipbf}[2]{a_h\left(#1,#2\right)} 
\newcommand{\eps}{\varepsilon} 
\newcommand{\jump}[1]{\left[\!\left[#1\right]\!\right]}
\newcommand{\av}[1]{\left\{\!\!\left\{#1\right\}\!\!\right\}}
\newcommand{\avv}[1]{\left\{\!\!\!\left\{#1\right\}\!\!\!\right\}}
\newcommand{\e}[1]{e^{#1}}
\newcommand{\I}{i}
\newcommand{\dd}{\delta_0}
\begin{document} 

\title[Optimization of Two-level Methods for DG applied to reaction
diffusion]{Optimization of two-level methods for DG discretizations of
reaction-diffusion equations} \author{Martin Jakob Gander \and José
Pablo Lucero Lorca}

\maketitle

\begin{abstract}
We analyze and optimize two-level methods applied to a symmetric
interior penalty discontinuous Galerkin finite element discretization
of a singularly perturbed reaction-diffusion equation. Previous
analyses of such methods have been performed numerically by Hemker
et. al. for the Poisson problem. Our main innovation is that we obtain
explicit formulas for the optimal relaxation parameter of the
two-level method for the Poisson problem in 1D, and very accurate
closed form approximation formulas for the optimal choice in the
reaction-diffusion case in all regimes. Our Local Fourier Analysis,
which we perform at the matrix level to make it more accessible to the
linear algebra community, shows that for DG penalization parameter
values used in practice, it is better to use \emph{cell} block-Jacobi
smoothers of Schwarz type, in contrast to earlier results suggesting
that \emph{point} block-Jacobi smoothers are preferable, based on a
smoothing analysis alone. Our analysis also reveals how the
performance of the iterative solver depends on the DG penalization
parameter, and what value should be chosen to get the fastest
iterative solver, providing a new, direct link between DG
discretization and iterative solver performance. We illustrate our
analysis with numerical experiments and comparisons in higher
dimensions and different geometries.

\smallskip
\noindent \textbf{Keywords.} Reaction-diffusion, Discontinuous
Galerkin, Interior Penalty, Finite Element Method, block-Jacobi,
Two-level, Multigrid, Optimization, Local Fourier Analysis
\end{abstract}

\section{Introduction} 

Reaction-diffusion equations are differential equations arising from
two of the most basic interactions in nature: reaction models the
interchange of a substance from one type to another, and diffusion its
displacement from a point to its neighborhood. Chemical reactors,
radiation transport, and even stock option prices, all have regimes
where their mathematical model is a reaction-diffusion equation with
applications ranging from engineering to biology and finance
\cite{Smoller1994, KanschatLucero16, Manteuffel1998, Fife1979,
Becherer2005}.

In this paper, we present and analyze two-level methods to solve a
symmetric interior penalty discontinuous Galerkin (SIPG)
discretization of a singularly perturbed reaction-diffusion
equation. Symmetric interior penalty methods \cite{Arnold2002,
Nitsche1971, Baker1977, Arnold1982, Wheeler1978} are particularly
interesting to solve these equations since by imposing boundary
conditions weakly they produce less oscillations near the boundaries
in singularly perturbed problems \cite{LewBuscaglia2008}. Using this
discretization, the reaction operator involves only volume integrals
with no coupling between cells. Therefore, all its contributions are
included inside the local subspaces when using \emph{cell}
block-Jacobi smoothers, which can then be interpreted as
non-overlapping Schwarz smoothers (see
\cite{FengKarakashian2001,Dryja2016,LuceroKanschat2020} and references
therein). On the other hand, also \emph{point} block-Jacobi smoothers
have been considered in the literature, which we study as well.

The SIPG method leaves two parameters to be chosen by the user. One is
the penalty parameter, which determines how discontinuous the solution
is allowed to be between cells, and the other is the relaxation used
for the stationary iteration. For classical finite element or finite
difference discretizations of Poisson problems, it is sufficient to
optimize the smoother alone by maximizing the damping in the high
frequency range to get best performance of the two and multilevel
method, which leads for a Jacobi smoother to the damping parameter
$\frac{2}{3}$ (see \cite{Zhou2009}). This is however different for
SIPG discretizations, as we show in Figure \ref{fig:diskmesh}
\begin{figure}
  \raggedleft
  \includegraphics[width=0.23\textwidth]{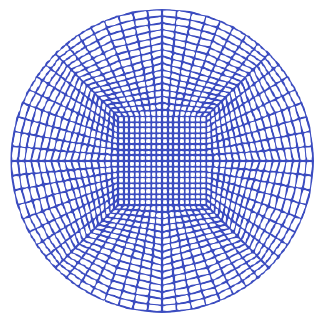}
  \includegraphics[width=0.7\textwidth]{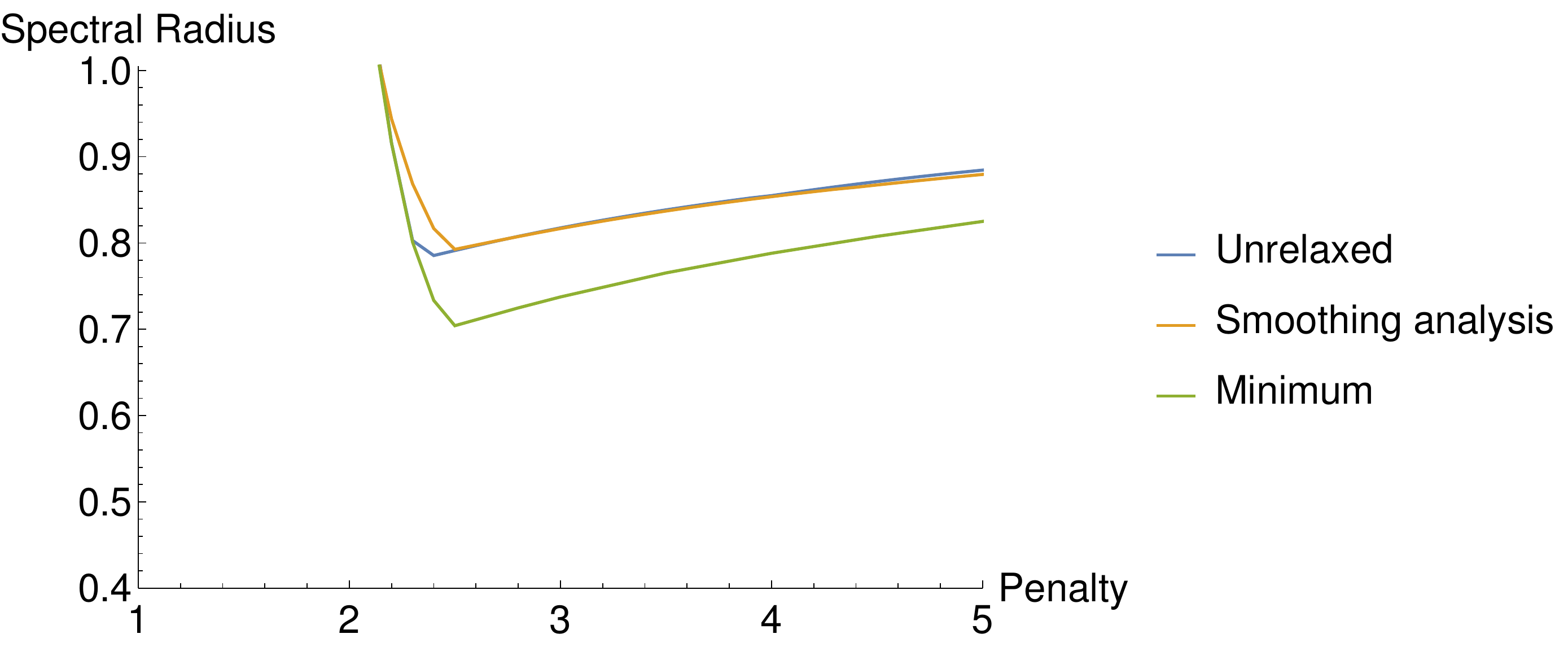}
  \caption{Left: circular domain and mesh used for the SIPG
    discretization of a Poisson problem. Right: spectral radius of the
    iteration operator as a function of the penalty parameter in SIPG
    using a \emph{cell} block-Jacobi smoother, without damping
    (Unrelaxed), with optimized damping from a 1D smoothing optimization
    alone (Smoothing analysis), and the numerically optimized two level
    process (Minimum).}
  \label{fig:diskmesh}
\end{figure}
for a Poisson problem on a disk discretized with SIPG on an irregular
mesh. We see that the best damping parameter depends on the
penalization parameter in SIPG, and can not be well predicted by a
smoothing analysis alone. Our goal here is to optimize the entire two
level process for such SIPG discretizations, both for Poisson and
singularly perturbed problems.

We apply Local Fourier Analysis (LFA), which has been widely used for
optimizing multigrid methods since its introduction in
\cite{Brandt1977}. This tool allows obtaining quantitative estimates
of the asymptotic convergence of numerical algorithms, and is
particularly useful for multilevel ones. Based on the Fourier
transform, the traditional LFA method is accurate for partial
differential equations if the influence of boundary conditions is
limited. It is well known \cite{Brandt1994}, that the method is exact
when periodic boundary conditions are used.

Previous Fourier analyses of such two-level methods for DG
discretizations have been performed for the Poisson equation by Hemker
et. al. (see \cite{Hemker2004,Hemker2003} and references therein), who
obtained numerically optimized parameters for \emph{point}
block-Jacobi smoothers. Our main results are first, explicit formulas
for the relaxation parameters of both \emph{point} and \emph{cell}
block-Jacobi smoothers for the Poisson equation and second, the
extension to the reaction-diffusion case, where we derive very
accurate closed form approximations of the optimal relaxation
parameters for the two-level process. Using our analytical results, we
can prove that for DG penalization parameter values used in practice,
it is better to use \emph{cell} block-Jacobi smoothers of Schwarz
type, in contrast to earlier results that suggested to use
\emph{point} block-Jacobi smoothers, based on a smoothing analysis
alone. Furthermore, our analysis reveals that there is an optimal
choice for the SIPG penalization parameter to get the fastest possible
two-level iterative solver. A further important contribution in our
opinion is that we present our LFA analysis using linear algebra tools
and matrices to make this important technique more accessible in the
linear algebra community.

\section{Model problem}\label{sec:model}

We consider the reaction-diffusion model problem
\begin{equation} \label{eqn:ContProb} 
  -\Delta u +\frac{1}{\eps} u = f \quad \text{in $\Omega$},\qquad
  u=0\quad\text{on $\partial\Omega$,}
\end{equation} 
where $\Omega \subset \mathbb{R}^{1,2,3}$ is a convex domain, $f$ is a
known source function and $\eps \in (0,\infty)$ is a parameter,
defining the relative size of the reaction term.

Using the $L^2(\Omega)$ space and the standard Sobolev space with zero
Dirichlet boundary conditions $H_0^1(\Omega)$, provided with their
respective inner products and norms, the weak form of problem
\eqref{eqn:ContProb} is: find $u \in H_0^1(\Omega)$ such that
\begin{equation}
  a(u,v) = (f,v)_{L^2(\Omega)},
\end{equation}
where $f \in L^2(\Omega)$ and the continuous bilinear form
$a(\cdot,\cdot)$ is defined by
\begin{equation}
  a(u,v) \coloneqq \int_\Omega \nabla u \cdot \nabla v dx +
  \frac{1}{\varepsilon} \int_\Omega u v dx =
  \left(u,v\right)_{H_0^1(\Omega)} + \frac{1}{\varepsilon}
  \left(u,v\right)_{L^2(\Omega)}.
\end{equation}
The bilinear form $a(u,v)$ is continuous and $H_0^1$-coercive
relatively to $L^2$ (see \cite[\S2.6]{DautrayLions1984}), i.e. there
exist constants $\gamma_a,C_a > 0$ such that
\begin{equation} 
  a(u,u) \ge \gamma_a \|u\|_{H_0^1(\Omega)}^2, \quad a(u,v) \le
  C_a\|u\|_{H_0^1(\Omega)}\|v\|_{H_0^1(\Omega)}.
\end{equation}
Note that even though $\gamma_a$ is independent of $\varepsilon$,
$C_a$ is not, which motivates our search for robust two-level methods
in the next section. From Lax-Milgram's theorem, the variational
problem admits a unique solution in $H_0^1(\Omega)$.

\subsection{Discretization}

We discretize the domain $\Omega$ using quadrilaterals or hexahedra,
constituting a mesh $\mesh$ with cells $\cell \in \mesh$ and faces
$\face \in \meshfaces$ using an SIPG finite element
discretization. Let $\mathbb{Q}_p(\cell)$ be the space of tensor
product polynomials with degree up to $p$ in each coordinate direction
with support in $\cell$. The discontinuous function space $V_h$ is
then defined as
\begin{equation} 
  V_h \coloneqq \bigl\{ v\in L^2(\Omega) \big| \forall \kappa,
  v_{|\cell} \in \mathbb{Q}_p(\cell)\bigr\}.
\end{equation}
Following \cite{Arnold1982}, we introduce the jump and average
operators $\jump{u} \coloneqq u^+ - u^-$ and $\av{u} \coloneqq
\frac{u^- + u^+}{2}$ and obtain the SIPG bilinear form
\begin{align}\label{eqn:SIPDG}
  \begin{aligned} 
    \ipbf{u}{v} \coloneqq& \int_\mesh \nabla u \cdot \nabla v dx +
    \frac{1}{\eps} \int_\mesh u v dx \\
    &+ \int_\meshfaces \left( \jump{u} \avv{\frac{\partial v}{\partial
          n}} + \avv{\frac{\partial u}{\partial n}} \jump{v} \right) ds +
    \int_\meshfaces \delta \jump{u} \jump{v} ds,
  \end{aligned}
\end{align} 
where the boundary conditions have been imposed weakly (i.e. Nitsche
boundary conditions \cite{Nitsche1971}) and $\delta \in \mathbb{R}$ is
a parameter penalizing the discontinuities at the interfaces between
cells. On the boundary there is only a single value, and we set
the value that would be on the other side to zero. In order for the
discrete bilinear form to be coercive, we must choose $\delta =
\delta_0/h$, where $h$ is the diameter of the cells and $\delta_0 \in
[1,\infty)$ is sufficiently large (see
\cite{KarakashianCollins2017}). Coercivity and continuity are proved
in \cite{Arnold1982} for the Laplacian under the assumption that
$\delta_0$ is sufficiently large, and these estimates are still valid
in the presence of the reaction term, since it is positive definite.

For our analysis, we will focus on a one-dimensional
problem\footnote{\label{HemkerFootnote}This is motivated by the
seminal work of P. W. Hemker \cite{Hemker2004} who stated: \emph{``we
study the one-dimensional equation, since this can be considered as an
essential building block for the higher dimensional case where we use
tensor product polynomials''}. We test however our analytical results
also in higher dimensions and on meshes which are not tensor products,
see Subsection \ref{HDSec}.}, with equally spaced nodes and cells with
equal size, see Fig. \ref{fig:MeshAndBasisFunctions}
\begin{figure}
  \centering
  \begin{tikzpicture} 
    \draw[-] (5,0) -- (1,0); 
    \draw[-] (5,0) -- (9,0); 
    \draw[dashed,-] (1,0) -- (0,0); 
    \draw[dashed,->] (9,0) -- (10,0) node[right]{$x$}; 
    \draw[-] (1,2) -- (1,-0.1) node[below]{$u_{j-2}^- u_{j-2}^+$};
    \draw[-] (3,2) -- (3,-0.1) node[below]{$u_{j-1}^- u_{j-1}^+$}; 
    \draw[-] (5,2) -- (5,-0.1) node[below]{$u_{j}^-u_{j}^+$}; 
    \draw[-] (7,2) -- (7,-0.1) node[below]{$u_{j+1}^- u_{j+1}^+$}; 
    \draw[-] (9,2) -- (9,-0.1) node[below]{$u_{j+2}^- u_{j+2}^+$};
    \draw[-] (1,0) -- (3,2) node[right]{$\phi_{j-1}$}; 
    \draw[-] (3,0) -- (1,2) node[right]{$\phi_{j-2}$}; 
    \draw[-] (3,0) -- (5,2) node[right]{$\phi_{j}$}; 
    \draw[-] (5,0) -- (3,2) node[left]{$\psi_{j-2}$};
    \draw[-] (5,0) -- (7,2) node[right]{$\phi_{j+1}$}; 
    \draw[-] (7,0) -- (5,2) node[left]{$\psi_{j - 1}$}; 
    \draw[-] (7,0) -- (9,2) node[left]{$\psi_{j+1}$}; 
    \draw[-] (9,0) -- (7,2) node[left]{$\psi_{j}$};
    \draw[-] (1.5,0.15) node[right]{$\kappa_{j-2}$}; 
    \draw[-] (3.5,0.15) node[right]{$\kappa_{j-1}$}; 
    \draw[-] (5.5,0.15) node[right]{$\kappa_{j}$}; 
    \draw[-] (7.5,0.15) node[right]{$\kappa_{j+1}$}; 
  \end{tikzpicture}
  \caption{Mesh for the discretization and finite element
    functions.}\label{fig:MeshAndBasisFunctions}
\end{figure}
for the mesh and finite element functions. We use the same kind of
basis and test functions and we denote them by $\phi_{j}=\phi_{j}(x)$
and $\psi_{j} = \psi_{j}(x)$ for decreasing and increasing linear
functions, respectively, with support in only one cell. The
coefficients accompanying each basis function are $u_j^+,u_j^- \in
\mathbb{R}$, where the superscript indicates if the nodal value is
evaluated from the left of the node ($^-$) or from the right ($^+$).

Any $v \in V_h$ can then be written as a linear combination of
$\phi_j(x)$ and $\psi_j(x)$,
\begin{align*}
  v &= \sum_{j \in J} u^+_j \phi_j(x) + u^-_j \psi_j(x) =
      \boldsymbol{u} \cdot \boldsymbol{\xi}^\intercal(x), \\
  \boldsymbol{u} &\coloneqq \left(\dots, u^+_{j-1}, u^-_{j-1},
                   u^+_{j}, u^-_{j}, u^+_{j+1}, u^-_{j+1}, \dots \right) \in
                   \mathbb{R}^{2J},\\
  \boldsymbol{\xi}(x) &\coloneqq \left( \dots, \phi_{j-1}(x),
                        \psi_{j-1}(x), \phi_{j}(x), \psi_{j}(x), \phi_{j+1}(x),\psi_{j+1}(x),
                        \dots \right),
\end{align*}
with $\phi_j(x), \psi_j(x) \in \mathbb{Q}_1(\kappa_j), j \in (1,J)$.
With this ordering, the SIPG discretization operator is
\newcommand{\edots}{\rotatebox{18}{$\ddots$}}
\renewcommand{\arraystretch}{2.}
\begin{align}\label{fig:DiscMat} A = \scalemath{0.75}{
  \left(
  \begin{array}{cccccccc} 
    \edots                        & \edots                        & \ipbf{\psi_{j-2}}{\psi_{j-1}} &                             &                             &                             \\
    \edots                        & \edots                        & \ipbf{\phi_{j-1}}{\psi_{j-1}} & \ipbf{\phi_{j-1}}{\phi_{j}} &                             &                             \\
    \ipbf{\psi_{j-1}}{\psi_{j-2}} & \ipbf{\psi_{j-1}}{\phi_{j-1}} & \ipbf{\psi_{j-1}}{\psi_{j-1}} & \ipbf{\psi_{j-1}}{\phi_{j}} & \ipbf{\psi_{j-1}}{\psi_{j}} &                             \\
                                  & \ipbf{\phi_{j}}{\phi_{j-1}}   & \ipbf{\phi_{j}}{\psi_{j-1}}   & \ipbf{\phi_{j}}{\phi_{j}}   & \ipbf{\phi_{j}}{\psi_{j}}   & \ipbf{\phi_{j}}{\phi_{j+1}} \\
                                  &                               &\ipbf{\psi_{j}}{\psi_{j-1}}    & \ipbf{\psi_{j}}{\phi_{j}}   & \edots                      & \edots \\
                                  &                               &                               & \ipbf{\phi_{j+1}}{\phi_{j}} & \edots                      & \edots
  \end{array}
                                                        \right)},
\end{align}
where the blank elements are zero. Using equation \eqref{eqn:SIPDG},
evaluating \eqref{fig:DiscMat} leads to
\renewcommand{\arraystretch}{1.7}
\begin{align} \label{eqn:DiscProb}
  A \boldsymbol{u} = 
  \frac1{h^2} \left(
  \begin{matrix}
    \ddots   & \ddots             & -\frac12                      &                              &                   &           \\ 
    \ddots   & \ddots             & \frac{h^2}{6 \eps}            &                   -\frac12   &                   &           \\[2mm]
    -\frac12 & \frac{h^2}{6 \eps} & \delta_0 + \frac{h^2}{3 \eps} &                   1-\delta_0 & - \frac12         &           \\[2mm]
             & - \frac12          & 1-\delta_0                    & \delta_0 +\frac{h^2}{3 \eps} & \frac{h^2}{6\eps} & - \frac12 \\
             &                    &                      -\frac12 &           \frac{h^2}{6 \eps} &            \ddots &    \ddots \\ 
             &                    &                               &                     -\frac12 &            \ddots &    \ddots
  \end{matrix}\right) \scalemath{0.85}{\left(\begin{matrix} 
    \vdots\\[2.75mm] 
    u_{j-1}^+\\[2.75mm] 
    u_{j}^- \\[2.75mm] 
    u_{j}^+ \\[2.75mm] 
    u_{j+1}^- \\[1mm] 
    \vdots
  \end{matrix}\right)} = \scalemath{0.85}{\left(\begin{matrix} 
    \vdots \\[2.75mm] 
    f_{j-1}^+\\[2.75mm] 
    f_{j}^- \\[2.75mm] 
    f_{j}^+ \\[2.75mm] 
    f_{j+1}^- \\[1mm] 
    \vdots
  \end{matrix}\right)} \eqqcolon \boldsymbol{f},
\end{align}
where \[\boldsymbol{f} = \left(\dots, f_{j-1}^+, f_{j}^-, f_{j}^+,
f_{j+1}^-, \dots\right) \in \mathbb{R}^{2J}\] is a vector, analogous
to $\boldsymbol{u}$, containing the coefficients of the representation
of the right hand side in $V_h$.  In the next section, we describe an
iterative two-level solver for the linear system \eqref{eqn:DiscProb}.

\section{Solver}

We solve the linear system \eqref{eqn:DiscProb} with a stationary
iteration of the form
\begin{equation}\label{eqn:richardson}
  \boldsymbol{u}^{(i+1)} = \boldsymbol{u}^{(i)} + M^{-1} \left(
    \boldsymbol{f} - A \boldsymbol{u}^{(i)} \right),
\end{equation}
where $M^{-1}$ is a two-level preconditioner, using first a cell-wise
nonoverlapping Schwarz (\emph{cell} block-Jacobi) smoother $D_c^{-1}$
(see \cite{FengKarakashian2001,Dryja2016}), i.e.
\renewcommand{\edots}{\rotatebox{10}{$\ddots$}}
\begin{align}\label{eqn:cellBJ}
  D_c^{-1} \boldsymbol{u} \coloneqq h^2 
  \left(\begin{array}{cccc}
          \edots &                              &                               &       \\
                 &\delta_0 + \frac{h^2}{3 \eps} & \frac{h^2}{6 \eps}            &       \\
                 &\frac{h^2}{6 \eps}            & \delta_0 + \frac{h^2}{3 \eps} &       \\
                 &                              &                               & \edots 
        \end{array}\right)^{-1}
  \left(\begin{array}{c}
          \vdots  \\
          u_{j}^+ \\
          u_{j}^- \\
          \vdots
        \end{array}\right).
\end{align}
This smoother takes only into account the relation between degrees of
freedom that are contained in each cell ($x_j^+$ and $x_j^-$ in
Fig. \ref{fig:Mesh}),
\begin{figure}
  \centering
  \begin{tikzpicture}[scale=1.]
    \draw[-] (0.0,0.0) -- (10.5, 0.0);
    \draw[-] (0.0,0.1) -- (0.0,-0.1) node[below]{$x_{1}^+ = 0$} ;
    \draw[-] (1.5,0.1) -- (1.5,-0.1) node[below]{$x_{1}^- x_{2}^+$};
    \draw[-] (3.0,0.1) -- (3.0,-0.1) node[below]{$\dots$};
    \draw[-] (4.5,0.1) -- (4.5,-0.1) node[below]{$x_{j-1}^-x_{j}^+$};
    \draw[-] (6.0,0.1) -- (6.0,-0.1) node[below]{$x_{j}^- x_{j+1}^+$};
    \draw[-] (7.5,0.1) -- (7.5,-0.1) node[below]{$\dots$};
    \draw[-] (9.0,0.1) -- (9.0,-0.1) node[below]{$x_{J-1}^- x_{J}^+$};
    \draw[-] (10.5,0.1) -- (10.5,-0.1) node[below]{$x_{J}^- = 1$};
  \end{tikzpicture}
  \caption{Mesh.}\label{fig:Mesh}
\end{figure}
i.e. we solve a local discrete reaction-diffusion problem consisting
of one cell, like a domain decomposition method with subdomains formed
by the cells.

Following \cite{Hemker2003}, we consider as well a \emph{point}
block-Jacobi smoother, consisting of a \emph{shifted} block
definition, i.e.
\renewcommand{\edots}{\rotatebox{10}{$\ddots$}}
\begin{align}
  D_p^{-1} \boldsymbol{u} \coloneqq h^2 
  \left(
  \begin{array}{cccc}\label{eqn:pointBJ}
    \edots &                              &                               &      \\
           &\delta_0 + \frac{h^2}{3 \eps} &1-\delta_0                     &      \\
           &1-\delta_0                    & \delta_0 + \frac{h^2}{3 \eps} &      \\
           &                              &                               &\edots
  \end{array}\right)^{-1}
  \left(
  \begin{array}{c}
    \vdots    \\
    u_{j}^-   \\
    u_{j+1}^+ \\
    \vdots
  \end{array}
  \right).
\end{align}
In this case, the smoother takes into account the relation between
degrees of freedom associated to a node ($x_j^-$ and $x_{j+1}^+$ in
Fig. \ref{fig:Mesh}). The domain decomposition interpretation in this
case is less clear than for $D_c$.

Let the restriction operator be defined as
\renewcommand{\edots}{\rotatebox{9}{$\ddots$}} 
\begin{align*}
  R \coloneqq \frac12 \left(
  \begin{array}{cccccccc}
    1    &   1/2   &   1/2   &         &         &         &         &         \\
         &   1/2   &   1/2   &    1    &         &         &         &         \\
         &         &         &         &  \edots &  \edots &  \edots &         \\
         &         &         &         &         &  \edots &  \edots &  \edots 
  \end{array}
  \right),
\end{align*}
and the prolongation operator be $P \coloneqq 2 R^\intercal$ (linear
interpolation), and set $A_0 := R A P$. Then the two-level
preconditioner $M^{-1}$, with one presmoothing step and a relaxation
parameter $\alpha$, acting on a residual $g$ is defined by Algorithm
\ref{alg:precit}.
\begin{algorithm}
  \caption{Two-level non-overlapping Schwarz preconditioned iteration.}
  \label{alg:precit}
  \begin{algorithmic}[1]
    \STATE compute $\boldsymbol{x}:= \alpha D^{-1} \boldsymbol{g}$,
    \STATE compute $\boldsymbol{y}:= \boldsymbol{x} + P A_0^{-1} R
    (\boldsymbol{g} - A \boldsymbol{x})$,
    \STATE obtain $M^{-1}\boldsymbol{g} = \boldsymbol{y}$.
  \end{algorithmic}
\end{algorithm}

\section{Local Fourier Analysis (LFA)}

In order to make the important LFA more accessible to the linear
algebra community, we work directly with matrices instead of
symbols. We consider a mesh as shown in Fig. \ref{fig:Mesh}, and
assume for simplicity that it contains an even number of
elements. Given that we are using nodal finite elements, a function $w
\in V_h$ is uniquely determined by its values at the nodes,
$\boldsymbol{w} = \left(\dots, w_{j-1}^+, w_{j}^-, w_{j}^+, w_{j+1}^-,
\dots \right)$. For the local Fourier analysis (LFA), we can picture
continuous functions that take the nodal values at the nodal points,
and since in the DG discretization there are two values at each node,
we consider two continuous functions, $w^+(x)$ and $w^-(x)$, which
interpolate the nodal values of $w$ to the left and right of the
nodes, respectively. We next represent these two continuous functions
as combinations of Fourier modes to get an understanding of how they
are transformed by the two grid iteration.

\subsection{LFA tools}\label{sec:lfatools}

For a uniform mesh with mesh size $h$, and assuming periodicity, we
can expand $w^-(x)$ and $w^+(x)$ into a finite Fourier series,
\begin{align*}
  w^+(x)=&\sum_{\widetilde{k}=-(J/2-1)}^{J/2} c_{\widetilde{k}}^{+}
           \e{\I 2 \pi \widetilde{k} x} = \sum_{k=1}^{J/2} c_{k-J/2}^{+} \e{\I 2
           \pi (k-J/2) x} + c_k^{+} \e{\I 2 \pi k x},\\
  w^-(x)=&\sum_{\widetilde{k}=-(J/2-1)}^{J/2} c_{\widetilde{k}}^{-} \e{\I 2 \pi
           \widetilde{k} x} = \sum_{k=1}^{J/2} c_{k-J/2}^{-} \e{\I 2 \pi (k-J/2)
           x} + c_k^{-} \e{\I 2 \pi k x}.
\end{align*}
Enforcing the interpolation condition for these trigonometric
polynomials at the nodes, $w_j^+ \coloneqq w^+(x_j^+)$ and $w_j^-
\coloneqq w^-(x_j^-)$, we obtain
\begin{align*}
  w^+_j =& \scalemath{0.9}{\sum_{k=1}^{J/2} c_{k-J/2}^+ \e{\I 2 \pi
           (k-J/2) x^+_j} + c_k^+ \e{\I 2 \pi k x^+_j} = \sum_{k=1}^{J/2}
           c_{k-J/2}^+ \e{\I 2 \pi (k-J/2) (j-1) h} + c_k^+ \e{\I 2 \pi k (j-1)
           h}}, \\
  w^-_j =& \scalemath{0.9}{\sum_{k=1}^{J/2} c_{k-J/2}^- \e{\I 2 \pi
           (k-J/2) x^-_j} + c_k^- \e{\I 2 \pi k x^-_j} = \sum_{k=1}^{J/2}
           c_{k-J/2}^- \e{\I 2 \pi (k-J/2) j h} + c_k^- \e{\I 2 \pi k j h}}.
\end{align*}
The representation for $\boldsymbol{w}^+$ and $\boldsymbol{w}^-$ as a
set of nodal values can therefore be written as
\begin{align*}
  \boldsymbol{w}^+ =& \left(
  \begin{smallmatrix} 
    w_1^+ \\
    \vdots \\
    w_j^+ \\
    \vdots \\
    w_{J}^+
  \end{smallmatrix} 
  \right) = 
  \scalemath{0.75}{
  \left(
  \begin{smallmatrix} 
    \displaystyle\sum_{k=1}^{J/2} c_{k-J/2}^+ + c_k^+ \\
    \vdots \\
    \displaystyle\sum_{k=1}^{J/2} c_{k-J/2}^+ \e{\I 2 \pi (k-J/2)
      (j-1) h} + c_k^+ \e{\I 2 \pi k (j-1) h} \\
    \vdots \\
    \displaystyle\sum_{k=1}^{J/2} c_{k-J/2}^+ \e{\I 2 \pi (k-J/2)
      (J-1) h} + c_k^+ \e{\I 2 \pi k (J-1) h}
  \end{smallmatrix} 
  \right)}, \\
\boldsymbol{w}^- =& \left(
  \begin{smallmatrix} 
    w_1^- \\
    \vdots \\
    w_j^- \\
    \vdots \\
    w_J^-
  \end{smallmatrix} 
  \right) = 
  \scalemath{0.75}{
  \left(
  \begin{smallmatrix} 
    \displaystyle\sum_{k=1}^{J/2} c_{k-J/2}^- \e{\I 2 \pi (k-J/2) h} +
    c_k^- \e{\I 2 \pi k h} \\
    \vdots \\
    \displaystyle\sum_{k=1}^{J/2} c_{k-J/2}^- \e{\I 2 \pi (k-J/2) j h}
    + c_k^- \e{\I 2 \pi k j h} \\
    \vdots \\
    \displaystyle\sum_{k=1}^{J/2} c_{k-J/2}^- \e{\I 2 \pi (k-J/2) J h}
    + c_k^- \e{\I 2 \pi k J h}
  \end{smallmatrix} 
  \right)}. 
\end{align*}
We thus write the Fourier representation as a matrix-vector
product and define two matrices $Q^+$ and $Q^-$, such that
$\boldsymbol{w}^+ = Q^+ \boldsymbol{c}^+$ and $\boldsymbol{w}^- = Q^-
\boldsymbol{c}^-$, where
\begin{align*}
  &Q^+ :=\scalemath{0.98}{\left(
  \begin{smallmatrix} 
    1                &        1             & \dots &              1              &          1             & \dots &   1   &           1                \\
    \vdots               &      \vdots          &\ddots &         \vdots              &    \vdots              &\ddots &\vdots &\vdots                      \\
    \e{- \I 2 \pi (1-J/2) (j-1) h} & \e{\I 2 \pi (j-1) h} & \dots & \e{\I 2 \pi (k-J/2) (j-1) h}& \e{\I 2 \pi k (j-1) h} & \dots &   1   & \e{\I 2 \pi (J/2) (j-1) h} \\
    \vdots               &      \vdots          &\ddots &         \vdots              &    \vdots              &\ddots &\vdots &\vdots                      \\
    \e{- \I 2 \pi (1-J/2) (J-1) h} & \e{\I 2 \pi (J-1) h} & \dots & \e{\I 2 \pi (k-J/2) (J-1) h}& \e{\I 2 \pi k (J-1) h} & \dots &   1   & \e{\I 2 \pi (J/2) (J-1) h} 
  \end{smallmatrix} 
  \right)},\\
  &Q^- := \scalemath{0.965}{\left(
  \begin{matrix} 
    \e{\I 2 \pi (1-J/2) h}   & \e{\I 2 \pi h}   & \dots & \e{\I 2 \pi (k-J/2) h}   & \e{\I 2 \pi k h}   & \dots &   1   & \e{\I 2 \pi (J/2) h}   \\
    \vdots         &      \vdots      &\ddots &         \vdots           &    \vdots          &\ddots &\vdots &\vdots   \\
    \e{\I 2 \pi (1-J/2) j h} & \e{\I 2 \pi j h} & \dots & \e{\I 2 \pi (k-J/2) j h} & \e{\I 2 \pi k j h} & \dots &   1   & \e{\I 2 \pi (J/2) j h} \\
    \vdots         &      \vdots      &\ddots &         \vdots           &    \vdots          &\ddots &\vdots &\vdots  \\
    \e{\I 2 \pi (1-J/2) J h} & \e{\I 2 \pi J h} & \dots & \e{\I 2 \pi (k-J/2) J h} & \e{\I 2 \pi k J h} & \dots &   1   & \e{\I 2 \pi (J/2) J h} \\
  \end{matrix} 
  \right)},
\end{align*}
and
\begin{align*}
  &\boldsymbol{c}^+ := \left( 
  \begin{matrix} 
    c^+_{1-J/2} & 
    c^+_1 & 
    \dots &
    c^+_{k-J/2} & 
    c^+_k & 
    \dots & 
    c^+_0 & 
    c^+_{J/2} 
  \end{matrix} 
            \right)^\intercal, \\
  &\boldsymbol{c}^- := \left( 
  \begin{matrix} 
    c^-_{1-J/2} & 
    c^-_1 & 
    \dots &
    c^-_{k-J/2} & 
    c^-_k & 
    \dots & 
    c^-_0 & 
    c^-_{J/2} 
  \end{matrix} 
            \right)^\intercal.
\end{align*}
An element in $V_h$ can then be represented by its nodal elements in a
stacked vector
\begin{align*}
  \check{\boldsymbol{w}} =  \left(\begin{smallmatrix}
      \boldsymbol{w}^+ \\ 
      \boldsymbol{w}^- \end{smallmatrix}\right)=
  \left(
  \begin{smallmatrix}
    Q^+ &     \\
    & Q^-
  \end{smallmatrix}
      \right)
      \left(
      \begin{smallmatrix}
        \boldsymbol{c}^+ \\
        \boldsymbol{c}^-
      \end{smallmatrix}
  \right)
  =: \check{Q} \check{\boldsymbol{c}}.
\end{align*}
We now reorder the vectors $\check{\boldsymbol{w}}$ and
$\check{\boldsymbol{c}}$ to obtain the new vectors $\boldsymbol{w}$
and $\boldsymbol{c}$ such that their elements are ordered from left to
right with respect to the mesh. To do so, we define an orthogonal
matrix $S$, such that $\boldsymbol{w} = S^\intercal
\check{\boldsymbol{w}}$ and $\check{\boldsymbol{w}} =
S\boldsymbol{w}$,
\renewcommand{\arraystretch}{1}
\begin{align*}
  S^\intercal \coloneqq \scalemath{0.7}{\left(
  \begin{array}{ccccc:cccc}
    1 &     &      &     &     &     &     &      &     \\
      &     &      &     &     &  1  &     &      &     \\
      &  1  &      &     &     &     &     &      &     \\
      &     &      &     &     &     &  1  &      &     \\
      &     &\cdots&     &     &     &     &      &     \\
      &     &      &     &     &     &     &\cdots&     \\
      &     &      &     &     &     &     &      &     \\
      &     &      &     &     &     &     &      &     \\
  \end{array}
  \right)},
\end{align*}
where the dashed line is drawn between the two columns in the middle
of the matrix. Finally, we define the reordered and scaled matrix $Q$
\begin{align*}
  \boldsymbol{w} = S^\intercal \check{Q} S \boldsymbol{c} =:
  \left(\sqrt{h}\right)^{-1} Q \boldsymbol{c}.
\end{align*}
The structure of Q is
\begin{align}\label{eqn:Q}
  Q = \sqrt{h} \scalemath{0.78}{\left(
  \begin{matrix} 
    \cdots &        &           \cdots           &                           &         \cdots        &                       & \cdots &        \\
           & \cdots &                            &           \cdots          &                       &         \cdots        &        & \cdots \\
    \cdots &        & \e{\I 2 \pi (k-J/2)(j-2)h} &                           & \e{\I 2 \pi k (j-2)h} &                       & \cdots &        \\
           & \cdots &                            &\e{\I 2 \pi (k-J/2)(j-1)h} &                       & \e{\I 2 \pi k (j-1)h} &        & \cdots \\
    \cdots &        & \e{\I 2 \pi (k-J/2)(j-1)h} &                           & \e{\I 2 \pi k (j-1)h} &                       & \cdots &        \\
           & \cdots &                            &\e{\I 2 \pi (k-J/2) j h}   &                       & \e{\I 2 \pi k j h}    &        & \cdots \\
    \cdots &        & \e{\I 2 \pi (k-J/2) j h}   &                           & \e{\I 2 \pi k j h}    &                       & \cdots &        \\
           & \cdots &                            &\e{\I 2 \pi (k-J/2)(j+1)h} &                       & \e{\I 2 \pi k (j+1)h} &        & \cdots \\
    \cdots &        & \e{\I 2 \pi (k-J/2)(j+1)h} &                           & \e{\I 2 \pi k (j+1)h} &                       & \cdots &        \\
           & \cdots &                            &\e{\I 2 \pi (k-J/2)(j+2)h} &                       & \e{\I 2 \pi k (j+2)h} &        & \cdots \\
    \cdots &        &           \cdots           &                           &         \cdots        &                       & \cdots &        \\
           & \cdots &                            &           \cdots          &                       &         \cdots        &        & \cdots 
  \end{matrix}
  \right)},
\end{align}
where the factor $\sqrt{h}$ is inserted such that $Q$ is unitary
(i.e. $Q^*=Q^{-1}$).

If we follow the same procedure for a coarser mesh, created by joining
neighboring cells together, the matrix $Q_0$, analogous to $Q$, picks
up the elements corresponding to the nodes contained in both the
coarse and fine meshes,
\begin{align*}
  Q_0 = \sqrt{2h} \left(
  \begin{matrix} 
    \cdots &        &           \cdots           &                       & \cdots &        \\
           & \cdots &                            &         \cdots        &        & \cdots \\
    \cdots &        & \e{\I 2 \pi (k-J/2)(j-2)h} &                       & \cdots &        \\
           & \cdots &                            & \e{\I 2 \pi k j h}    &        & \cdots \\
    \cdots &        & \e{\I 2 \pi (k-J/2) j h}   &                       & \cdots &        \\
           & \cdots &                            & \e{\I 2 \pi k (j+2)h} &        & \cdots \\
    \cdots &        &           \cdots           &                       & \cdots &        \\
           & \cdots &                            &         \cdots        &        & \cdots 
   \end{matrix}\right),
\end{align*}
where $j \ge 2$ is even and the factor $\sqrt{2h}$ is inserted such
that $Q_0$ is unitary. We next show that $Q$ renders $A$ and $D$ block
diagonal and $Q_0$ and $Q$ do the same for $R$ and $P$, albeit with
rectangular blocks. Therefore the study of the two grid iteration
operator is reduced to the study of a generic block. In order to prove
this result we need the following lemma.
\begin{lemma} \label{lem:blockdiagonal}
  Let $C \in \mathbb{R}^{2J \times 2J}$ be a block circulant matrix of
  the form
  \begin{equation*}
    C=\left(\begin{smallmatrix}
        C_{0}  & C_{1}  & C_{2}  & \dots  &   0    & \dots  & C_{-2} & C_{-1} \\
        C_{-1} & C_{0}  & C_{1}  & C_{2}  & \dots  &   0    & \dots  & C_{-2} \\
        C_{-2} & C_{-1} & C_{0}  & C_{1}  & C_{2}  & \dots  &   0    & \dots  \\
        \dots  & C_{-2} & C_{-1} & C_{0}  & C_{1}  & C_{2}  & \dots  & \dots  \\
        0      & \dots  & C_{-2} & C_{-1} & C_{0}  & C_{1}  & C_{2}  & \dots  \\
        \dots  &   0    & \dots  & C_{-2} & C_{-1} & C_{0}  & C_{1}  & \dots  \\
        C_{2}  & \dots  &   0    & \dots  & C_{-2} & C_{-1} & C_{0}  & \dots  \\
        C_{1}  & C_{2}  & \dots  & \dots  & \dots  & \dots  & \dots  & \dots
      \end{smallmatrix}\right),
  \end{equation*}
  where $C_j$ represents $(2 \times 2)$-blocks, and let $Q \in
  \mathbb{R}^{2J \times 2J}$ be the matrix which columns are discrete
  \emph{grid functions} as defined in \eqref{eqn:Q}, then the matrix
  $M=Q^* C Q$ is $(2 \times 2)$-block diagonal.
\end{lemma}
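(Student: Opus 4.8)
The plan is to use the classical fact that a block circulant matrix is block diagonalized by a block Fourier matrix, and to verify that the columns of $Q$ are exactly the (staggered) block Fourier vectors of $C$. First I would fix a frequency $\kh \in \{1-J/2,\dots,J/2\}$ and isolate the two columns of $Q$ belonging to it. From the structure \eqref{eqn:Q}, one of them is supported on the $w^+$ rows, with entry $\e{\I 2\pi\kh(j-1)h}$ in the node-$j$ block, and the other is supported on the $w^-$ rows, with entry $\e{\I 2\pi\kh j h}$. Setting $\omega \coloneqq \e{\I 2\pi\kh h}$ and collecting the two columns into a single $2J\times 2$ matrix $V^{(\kh)}$, the node-$j$ block of $V^{(\kh)}$ is $\omega^{\,j}\Phi$, where $\Phi \coloneqq \operatorname{diag}(\omega^{-1},1)$ is a fixed invertible $2\times 2$ matrix. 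This form records the half-node staggering between the $+$ and $-$ values in the single factor $\Phi$, and is what I would feed into the circulant computation.

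Next I would apply $C$ to $V^{(\kh)}$ one block at a time. With the convention that the $(p,q)$ block of $C$ is $C_{q-p}$, the node-$j$ block of $CV^{(\kh)}$ is $\sum_q C_{q-j}\,\omega^{\,q}\Phi$. Substituting $r=q-j$ and factoring out $\omega^{\,j}$ gives $\omega^{\,j}\bigl(\sum_r C_r\omega^{\,r}\bigr)\Phi = \omega^{\,j}\,\widehat C(\kh)\,\Phi$, where $\widehat C(\kh)\coloneqq\sum_r C_r\omega^{\,r}$ is the $2\times 2$ symbol, a finite sum over the few nonzero blocks $C_{-1},C_0,C_1$. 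The step that needs care, and which I expect to be the main obstacle to state cleanly, is the circulant wrap-around: when $q-j$ leaves the admissible range, the identification of $C_r$ with $C_{r\pm J}$ in the corners of $C$, together with $\omega^{\,J}=\e{\I 2\pi\kh}=1$ (valid since $\kh\in\mathbb Z$ and $hJ=1$), ensures that the corner blocks contribute exactly the missing terms, so the relation holds without boundary defect. This is precisely the periodicity under which LFA is exact.

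From this I would conclude the eigen-relation $CV^{(\kh)}=V^{(\kh)}B(\kh)$ with the $2\times 2$ matrix $B(\kh)\coloneqq\Phi^{-1}\widehat C(\kh)\Phi$, since $\omega^{\,j}\widehat C(\kh)\Phi=(\omega^{\,j}\Phi)\bigl(\Phi^{-1}\widehat C(\kh)\Phi\bigr)$ holds in every block $j$. Thus the two-dimensional column space of $V^{(\kh)}$ is $C$-invariant and $C$ acts on it as $B(\kh)$.

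Finally I would assemble the frequencies. The $J$ pairs $V^{(\kh)}$ are mutually orthogonal --- distinct frequencies by the discrete orthogonality of the exponentials, and the $+$ and $-$ columns of the same frequency because they live on disjoint (even/odd) rows --- and, ordered frequency pair by frequency pair, they constitute all $2J$ columns of $Q$, which is unitary by construction. Stacking the relations $CV^{(\kh)}=V^{(\kh)}B(\kh)$ then reads $CQ=Q\,\operatorname{blockdiag}_{\kh}\!\bigl(B(\kh)\bigr)$, and left multiplication by $Q^{*}=Q^{-1}$ gives $M=Q^{*}CQ=\operatorname{blockdiag}_{\kh}\!\bigl(B(\kh)\bigr)$, which is $(2\times 2)$-block diagonal as claimed.
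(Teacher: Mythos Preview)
Your proof is correct and takes a genuinely different route from the paper. The paper verifies block-diagonality by brute force: it writes out the $(p,q)$ block of $M$ as a double sum over $k$ and $l$, splits into four cases according to the parities of $p$ and $q$, and in each case factors out a geometric sum $\sum_{l=1}^{J}\exp\bigl(\tfrac{\I 2\pi}{J}\,\tfrac{1}{2}(q-p)l\bigr)$ which vanishes for $p\ne q$ as a sum of roots of unity. Your argument instead recognizes the columns of $Q$ attached to a single frequency $\kh$ as a $C$-invariant two-plane: writing the node-$j$ block as $\omega^{\,j}\Phi$ is exactly the device that absorbs the half-node staggering into a fixed conjugation, after which the standard block-circulant eigenvector computation $CV^{(\kh)}=V^{(\kh)}\Phi^{-1}\widehat C(\kh)\Phi$ goes through in one line. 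Your approach is shorter, explains \emph{why} the diagonalization works, and hands you the $2\times 2$ symbol $B(\kh)$ for free; the paper's approach, by contrast, is aligned with its declared aim of presenting LFA via explicit matrix manipulations accessible to readers unfamiliar with symbol calculus. One small remark: your parenthetical that $\widehat C(\kh)$ is ``a finite sum over the few nonzero blocks $C_{-1},C_0,C_1$'' is specific to the tridiagonal operators used later in the paper, whereas the Lemma is stated for arbitrary block circulants; your argument does not actually rely on this, so it is harmless, but you might drop the remark for clarity.
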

\begin{proof}
  We compute the block $(p,q)$ of $M$ to be
  \begin{equation*}
    M_{p,q} = \sum_{k=-(J/2-1)}^{J/2-1} \sum_{l=1}^J Q^*_{l,p} C_{k}
    Q_{((k+l-1)\%J)+1,q},
  \end{equation*}
  where we denote by $\% J$ equivalency modulo $J$, and a block $(m,n)$
  of $Q$ is
  \begin{equation*}
    Q_{m,n} =
    \begin{cases}
      \left(\begin{matrix}
          \e{\I 2 \pi ((n + 1)/2 - J/2) (m-1) h} & 0 \\
          0 & \e{\I 2 \pi ((n + 1)/2 - J/2) m h}
        \end{matrix}\right), &\text{if $n$ is odd,} \\
      \left(\begin{matrix}
          \e{\I 2 \pi (n/2) (m-1) h} & 0 \\
          0 & \e{\I 2 \pi (n/2) m h}
        \end{matrix}\right) ,& \text{if $n$ is even.}
    \end{cases} 
  \end{equation*}
  As before, we will use for the small blocks the notation $C_k =
  \left(\begin{smallmatrix} c_{11} & c_{12} \\ c_{21} &
      c_{22} \end{smallmatrix} \right)$.  We consider an off-diagonal block,
  i.e. $Q_{p,q}$, with $p \ne q$, and take an arbitrary $k$. Then if
  $p$ and $q$ are even, we have
  \begin{multline*}
    \sum_{l=1}^J Q^*_{l,p} C_{k} Q_{((k+l-1)\%J)+1,q} \\
    = \sum_{l=1}^J \left(
      \begin{smallmatrix}
        c_{11} \e{\frac{\I (((k+l-1)\%J)+1-1) \pi  q}{J}-\frac{\I (l-1) p \pi }{J}} & c_{12} \e{\frac{\I (((k+l-1)\%J)+1) \pi  q}{J}-\frac{\I (l-1) p \pi }{J}} \\
        c_{21} \e{\frac{\I (((k+l-1)\%J)+1-1) \pi  q}{J}-\frac{\I l p \pi }{J}}     & c_{22} \e{\frac{\I (((k+l-1)\%J)+1) \pi  q}{J}-\frac{\I l p \pi }{J}}
      \end{smallmatrix}
    \right) \\
    = \left(
      \begin{smallmatrix}
        c_{11} \e{\frac{\I 2 \pi}{J} \left(\left(\frac12 (p+(k-1) q)\right) \% J\right)} & c_{12} \e{\frac{\I 2 \pi}{J} \left(\left(\frac12 (p+k q)\right) \% J \right)} \\
        c_{21} \e{\frac{\I 2 \pi}{J} \left(\left(\frac12 (k-1) q\right) \% J\right)}     & c_{22} \e{\frac{\I 2 \pi}{J} \left(\left(\frac12 k q \right) \% J \right)}
      \end{smallmatrix}
    \right) \sum_{l=1}^J \e{\frac{\I 2 \pi}{J}  \left(\frac12 (q - p) l \right) \% J} = 0,
  \end{multline*}
  since we identify the sum of the roots of unity.  If $p$ and $q$ are
  odd, we have
  \begin{multline*}
    \sum_{l=1}^J Q^*_{l,p} C_{k} Q_{((k+l-1)\%J)+1,q} \\
    = \sum_{l=1}^J \left( 
      \scalemath{0.8}{
        \begin{smallmatrix} 
          c_{11} \e{\frac{\I 2 ((k+l-1)\%J)+1-1) \pi  \left(\frac{q+1}{2}-\frac{J}{2}\right)}{J}-\frac{\I 2 (l-1) \left(\frac{p+1}{2}-\frac{J}{2}\right) \pi }{J}} & c_{12} \e{\frac{\I 2 ((k+l-1)\%J)+1) \pi  \left(\frac{q+1}{2}-\frac{J}{2}\right)}{J}-\frac{\I 2 (l-1) \left(\frac{p+1}{2}-\frac{J}{2}\right) \pi }{J}} \\
          c_{21} \e{\frac{\I 2 ((k+l-1)\%J)+1-1) \pi  \left(\frac{q+1}{2}-\frac{J}{2}\right)}{J}-\frac{\I 2 l \left(\frac{p+1}{2}-\frac{J}{2}\right) \pi }{J}} & c{22} \e{\frac{\I 2 ((k+l-1)\%J)+1) \pi \left(\frac{q+1}{2}-\frac{J}{2}\right)}{J}-\frac{\I 2 l \left(\frac{p+1}{2}-\frac{J}{2}\right) \pi }{J}} 
        \end{smallmatrix}}
    \right) \\
    = \left(
      \begin{smallmatrix}
        c_{11} \e{\frac{\I 2 \pi}{J} \left(\left(\frac12(k+p+(k-1) q) \right)\% J \right)} & c_{12} \e{\frac{\I 2 \pi}{J} \left(\left(\frac12 (p+k (q+1)+1)\right) \% J \right)} \\
        c_{21} \e{\frac{\I 2 \pi}{J} \left(\left(\frac12 (k-1)(q+1)\right) \% J \right)}   & c_{22} \e{\frac{\I 2 \pi}{J} \left(\left(\frac12 k(q+1)\right)\%J\right)}
      \end{smallmatrix}
    \right) \sum_{l=1}^J \e{\frac{\I 2 \pi}{J}  \left(\frac12 (q - p) l \right) \% J} = 0,
  \end{multline*}
  since again we identify the sum of the roots of unity. If $p$ is odd
  and $q$ is even, we get
  \begin{multline*}
    \sum_{l=1}^J Q^*_{l,p} C_{k} Q_{((k+l-1)\%J)+1,q} \\
    = \sum_{l=1}^J \left(
      \begin{smallmatrix}
        c_{11} \e{\frac{\I ((k+l-1)\%J)+1-1) \pi  q}{J}-\frac{\I 2 (l-1) \left(\frac{p+1}{2}-\frac{J}{2}\right) \pi }{J}} & c_{12} \e{\frac{\I ((k+l-1)\%J)+1) \pi  q}{J}-\frac{\I 2 (l-1) \left(\frac{p+1}{2}-\frac{J}{2}\right) \pi }{J}} \\
        c_{21} \e{\frac{\I ((k+l-1)\%J)+1-1) \pi  q}{J}-\frac{\I 2 l \left(\frac{p+1}{2}-\frac{J}{2}\right) \pi }{J}} & c_{22} e^{\frac{\I ((k+l-1)\%J)+1) \pi  q}{J}-\frac{\I 2 l \left(\frac{p+1}{2}-\frac{J}{2}\right) \pi }{J}} 
      \end{smallmatrix}
    \right) \\
    \left(
      \begin{smallmatrix}
        c_{11} \e{-\frac{i \pi  (J-p-k q+q-1)}{J}} & c_{12} e^{-\frac{i \pi  (J-p-k q-1)}{J}} \\
        c_{21} \e{\frac{i (k-1) \pi  q}{J}}        & c_{22} e^{\frac{i k \pi  q}{J}}
      \end{smallmatrix}
    \right) \sum_{l=1}^J \e{\frac{\I 2 \pi}{J}  \left(\frac12 (q - p - 1 + J) l \right) \% J} = 0.
  \end{multline*}

  If $p$ is even and $q$ is odd, we get similarly
  \begin{multline*}
    \sum_{l=1}^J Q^*_{l,p} C_{k} Q_{((k+l-1)\%J)+1,q} = \\
    = \sum_{l=1}^J \left(
      \begin{smallmatrix}
        c_{11} \e{\frac{\I 2 ((k+l-1)\%J)+1-1) \pi \left(\frac{q+1}{2}-\frac{J}{2}\right)}{J}-\frac{\I (l-1) p \pi }{J}} & c_{12} \e{\frac{\I 2 ((k+l-1)\%J)+1) \pi \left(\frac{q+1}{2}-\frac{J}{2}\right)}{J}-\frac{\I (l-1) p \pi }{J}} \\
        c_{21} \e{\frac{\I 2 ((k+l-1)\%J)+1-1) \pi \left(\frac{q+1}{2}-\frac{J}{2}\right)}{J}-\frac{\I l p \pi }{J}} & c_{22} \e{\frac{\I 2 ((k+l-1)\%J)+1) \pi \left(\frac{q+1}{2}-\frac{J}{2}\right)}{J}-\frac{\I l p \pi }{J}}
      \end{smallmatrix}
    \right) \\
    = \left(
      \begin{smallmatrix}
        c_{11} \e{-\frac{\I 2 \pi}{J} \left(\left(\frac12 (J (k-1)-p+q-k (q+1)+1) \right) \% J \right)} & c_{12} \e{\frac{\I 2 \pi}{J} \left(\left(\frac12 (p+k (-J+q+1))\right) \% J \right)} \\
        c_{21} \e{-\frac{\I 2 \pi}{J} \left(\left(\frac12 (k-1)(J-q-1)\right) \% J \right)}             & c_{22} \e{-\frac{\I 2 \pi}{J} \left(\left(\frac12 k (J-q-1)\right) \% J \right)}
      \end{smallmatrix}
    \right) \\
    \sum_{l=1}^J \e{\frac{\I 2 \pi}{J} \left(\frac12 (q - p + 1 - J) l \right) \% J} = 0,
  \end{multline*}
  and thus $M$ is a $(2 \times 2)$-block diagonal matrix.
\end{proof}

Given that Lemma \ref{lem:blockdiagonal} ensures $M$ is block
diagonal, a generic block with block index $p,q$ can be computed as
follows:
\begin{align*}
  M = Q^* C Q \Longleftrightarrow\ & Q M = C Q \Longleftrightarrow (Q M)_{p,q} = (C Q)_{p,q}, \hspace{0.5cm} \forall p,q \\
  \Longleftrightarrow\ &Q_{p,q} M_{q} = \sum_{k=-(J/2-1)}^{J/2-1} C_{k} Q_{((k+p-1)\%J)+1,q}, \hspace{0.5cm} \forall p,q \\
  \Longleftrightarrow\ &M_{q} = (Q^*)_{q,p} \sum_{k=-(J/2-1)}^{J/2-1} C_{k} Q_{((k+p-1)\%J)+1,q} , \hspace{0.5cm} \forall p,q\\
  \Longleftrightarrow\ &\widetilde{M} = \widetilde{Q}^* \widetilde{C} Q_r,
\end{align*}
where $\widetilde{C} Q_r = \sum_{k=-(J/2-1)}^{J/2-1} C_{k} Q_{((k+p-1)\%J)+1,q}$, 
\begin{align*}
  Q_r =& \sqrt{\frac12} \left(
  \begin{matrix} 
  \e{\I 2 \pi (k-J/2)(j-2)h} &                           & \e{\I 2 \pi k (j-2)h} &                       \\
                             &\e{\I 2 \pi (k-J/2)(j-1)h} &                       & \e{\I 2 \pi k (j-1)h} \\
  \e{\I 2 \pi (k-J/2)(j-1)h} &                           & \e{\I 2 \pi k (j-1)h} &                       \\
                             &\e{\I 2 \pi (k-J/2) j h}   &                       & \e{\I 2 \pi k j h}    \\
  \e{\I 2 \pi (k-J/2) j h}   &                           & \e{\I 2 \pi k j h}    &                       \\
                             &\e{\I 2 \pi (k-J/2)(j+1)h} &                       & \e{\I 2 \pi k (j+1)h} \\
  \e{\I 2 \pi (k-J/2)(j+1)h} &                           & \e{\I 2 \pi k (j+1)h} &                       \\
                             &\e{\I 2 \pi (k-J/2)(j+2)h} &                       & \e{\I 2 \pi k (j+2)h}
  \end{matrix}
  \right),\\
  Q_l =& \sqrt{\frac12} \scalemath{0.98}{\left(
  \begin{matrix} 
  \e{-\I 2 \pi (k-J/2)(j-1)h} &                            & \e{-\I 2 \pi (k-J/2) j h} &                               \\
                              &\e{-\I 2 \pi (k-J/2) j h}   &                           & \e{-\I 2 \pi (k-J/2) (j+1) h} \\
  \e{-\I 2 \pi k (j-1)h}      &                            & \e{-\I 2 \pi k j h}       &                               \\
                              &\e{-\I 2 \pi k j h}         &                           & \e{-\I 2 \pi k (j+1) h}      
  \end{matrix}
  \right)},
\end{align*}
and the factor $\sqrt{\frac12}$ is chosen such that $Q_l I_{4 \times
8} Q_r = I_{4 \times 4}$, where $I_{4 \times 4}$ is the $4 \times 4$
identity matrix and
\[
  I_{4 \times 8} = \begin{pmatrix}
    0 & 0 & 1 & 0 & 0 & 0 & 0 & 0 & \\
    0 & 0 & 0 & 1 & 0 & 0 & 0 & 0 & \\
    0 & 0 & 0 & 0 & 1 & 0 & 0 & 0 & \\
    0 & 0 & 0 & 0 & 0 & 1 & 0 & 0 \end{pmatrix}.
\]

We have computed a generic block $\widetilde{M}$ in the block diagonal
of $M$. In the next subsection, we will work with blocks of size $4$
by $4$, given that we use a coarse correction with coarse cells formed
from $2$ adjacent fine cells with $2$ degrees of freedom each.

\subsection{Analysis of the SIPG operator and associated smoothers}
We extract a submatrix $\widetilde{A}$ containing the degrees of
freedom of two adjacent cells from the SIPG operator defined in
\eqref{eqn:DiscProb},
\begin{align*}
  \widetilde{A} = \left(
  \begin{array}{cccccccc}
    -\frac12 & 1-\delta_0 & \delta_0 + \frac{h^2}{3 \eps} & \frac{h^2}{6\eps}           & - \frac12                     &                             &                   &           \\
             & - \frac12  & \frac{h^2}{6 \eps}            & \delta_0 +\frac{h^2}{3\eps} & 1-\delta_0                    & -\frac12                    &                   &           \\
             &            & -\frac12                      & 1-\delta_0                  & \delta_0 + \frac{h^2}{3 \eps} & \frac{h^2}{6 \eps}          & - \frac12         &           \\
             &            &                               & - \frac12                   & \frac{h^2}{6 \eps}            & \delta_0 +\frac{h^2}{3\eps} & 1-\delta_0        & - \frac12   
  \end{array}
  \right).
\end{align*}

We can now begin the block-diagonalization,
\begin{align}
\begin{aligned}
  \widehat{A} &= Q_l \widetilde{A} Q_r \\
  =& \frac1{h^2}
  \left(
    \scalemath{0.9}{\begin{smallmatrix} 
        \dd+\frac{h}{3\eps}+\cos\left(2 \pi (k-J/2) h \right) & 1-\dd+\frac{h^2}{6\eps}\e{\I 2 \pi (k-J/2) h}           &                                                   &                                                   \\
        1-\dd+\frac{h^2}{6\eps}\e{-\I 2 \pi (k-J/2) h}        & \dd+\frac{h^2}{3\eps}+\cos\left(2 \pi (k-J/2) h \right) &                                                   &                                                   \\
                                                              &                                                         & \dd+\frac{h^2}{3\eps}-\cos\left(2 \pi k h \right) & 1-\dd+\frac{h^2}{6\eps}\e{\I 2 \pi k h}           \\
                                                              &                                                         & 1-\dd+\frac{h^2}{6\eps}\e{-\I 2 \pi k h}          & \dd+\frac{h^2}{3\eps}-\cos\left(2 \pi k h \right)                           
  \end{smallmatrix}}
  \right).
\end{aligned}
\end{align}
The same mechanism can be applied to the smoothers
\begin{align}
&\begin{aligned}
  \widetilde{D}_c = \left(
  \begin{array}{cccccccc}
       0     &      0     & \delta_0 + \frac{h^2}{3 \eps} & \frac{h^2}{6\eps}           &              0                &                             &                   &           \\
             &      0     & \frac{h^2}{6 \eps}            & \delta_0 +\frac{h^2}{3\eps} &              0                &              0              &                   &           \\
             &            &               0               &              0              & \delta_0 + \frac{h^2}{3 \eps} & \frac{h^2}{6 \eps}          &         0         &           \\
             &            &                               &              0              & \frac{h^2}{6 \eps}            & \delta_0 +\frac{h^2}{3\eps} &         0         &      0      
  \end{array}
  \right),
\end{aligned}\\
&\begin{aligned}
  \widehat{D}_c = Q_l \widetilde{D}_c Q_r = \frac1{h^2}
  \left(
  \begin{smallmatrix} 
    \dd+\frac{h}{3\eps}                      & \frac{h^2}{6\eps}\e{\I 2 \pi (k-J/2) h} &                                    &                                   \\
    \frac{h^2}{6\eps}\e{-\I 2 \pi (k-J/2) h} & \dd+\frac{h^2}{3\eps}                   &                                    &                                   \\
                                             &                                         & \dd+\frac{h^2}{3\eps}              & \frac{h^2}{6\eps}\e{\I 2 \pi k h} \\
                                             &                                         & \frac{h^2}{6\eps}\e{-\I 2 \pi k h} & \dd+\frac{h^2}{3\eps}                           
  \end{smallmatrix}
  \right),
\end{aligned}
\end{align}
and
\begin{align}
&\begin{aligned}
  \widetilde{D}_p = \left(
  \begin{array}{cccccccc}
       0     & 1-\delta_0 & \delta_0 + \frac{h^2}{3 \eps} &              0              &               0               &                             &                   &           \\
             &      0     &              0                & \delta_0 +\frac{h^2}{3\eps} & 1-\delta_0                    &              0              &                   &           \\
             &            &              0                & 1-\delta_0                  & \delta_0 + \frac{h^2}{3 \eps} &              0              &         0         &           \\
             &            &                               &              0              &               0               & \delta_0 +\frac{h^2}{3\eps} & 1-\delta_0        &     0       
  \end{array}
  \right),
\end{aligned}\\
&\begin{aligned}
  \widehat{D}_p &= Q_l \widetilde{D}_p Q_r = \frac1{h^2}
  \left(
  \begin{smallmatrix} 
    \dd+\frac{h}{3\eps} & 1-\dd                 &                       &                       \\
    1-\dd               & \dd+\frac{h^2}{3\eps} &                       &                       \\
                        &                       & \dd+\frac{h^2}{3\eps} & 1-\dd                 \\
                        &                       & 1-\dd                 & \dd+\frac{h^2}{3\eps}
  \end{smallmatrix}
  \right).
\end{aligned}
\end{align}
We continue with the analysis of the restriction, prolongation and
coarse operators.

\subsection{Analysis of the restriction, prolongation and coarse
operators}\label{subsec:coarse}

The same block-diagonalization is possible for the restriction and
prolongation operators. The calculation for the restriction gives
\begin{align}
  \begin{aligned}
    \widetilde{R} =& \frac12 \left(
      \begin{array}{cccccccc}
        1    &   1/2   &   1/2   &         &         &         &         &         \\
             &   1/2   &   1/2   &    1    &         &         &         &         \\
             &         &         &         &    1    &   1/2   &   1/2   &         \\
             &         &         &         &         &   1/2   &   1/2   &    1    
      \end{array}
    \right),
    \\
    \widehat{R} =& \frac12 {Q_l}_0 \widetilde{R} Q_r \\
    =& \frac1{2\sqrt{2}}
    \scalemath{0.8}{\left(
        \begin{array}{cccc}
          2 + \e{\I 2 \pi (k-J/2) h}                    & \e{\I 2 \pi (k - J/2) h}                        & (-1)^j \left(2 + \e{\I 2 \pi k h}\right) & (-1)^j \left(\e{\I 2 \pi k h}\right) \\
          (-1)^j \left(\e{-\I 2 \pi (k - J/2) h}\right) & (-1)^j \left(2 + \e{-\I 2 \pi (k-J/2) h}\right) & \e{-\I 2 \pi k h}                        & 2 + \e{-\I 2 \pi k h}
        \end{array}
      \right)},
  \end{aligned}
\end{align}
and for the prolongation operator we obtain
\begin{align}
  P = 2 R^\intercal, && \widehat{P} = Q_l \widetilde{P}
                        {Q_r}_0 = 2 \widehat{R}^*,
\end{align}
and finally for the coarse operator
\begin{align}\label{eqn:coarseoperator}
  \begin{aligned}
    Q_0^* A_0 Q_0 =& Q_0^* R A P Q_0 = Q_0^* R Q Q^* A Q Q^* P Q_0 \\
\implies \widehat{A}_0=& \widehat{R} \widehat{A} \widehat{P}
    = \frac1{H^2}
    \left(
      \begin{smallmatrix} 
        2 \dd+\frac{H^2}{3\eps}-\cos\left(2 \pi k H \right)           & (-1)^j \left(1-2\dd+\frac{H^2}{6\eps}\e{\I 2 \pi k H}\right) \\
        (-1)^j \left(1-2\dd+\frac{H^2}{6\eps}\e{-\I 2 \pi k H}\right) & 2 \dd+\frac{H^2}{3\eps}-\cos\left(2 \pi k H \right)
      \end{smallmatrix}
    \right),
  \end{aligned}
\end{align}
where $H=2h$. We notice that the coarse operator is different for $j$
even and $j$ odd; however, the matrices obtained for both cases are
similar, with similarity matrix $(-1)^j I$ where $I$ is the identity
matrix, and therefore have the same spectrum. In the rest of the paper
we assume $j$ is even, without loss of generality. This means that we
will be studying a node that is present in both the coarse and fine
meshes. We can now completely analyze the two grid iteration operator.

\subsection{Analysis of the two grid iteration operator}

The error reduction capabilities of Algorithm \ref{alg:precit} are
given by the spectrum of the iteration operator
\begin{align*}
  E = (I - P A_0^{-1} R A)(I - \alpha D^{-1} A),
\end{align*}
and we have shown that the 4-by-4 block Fourier-transformed operator
\begin{align*}
  \widehat{E}(k) = (I - \widehat{P}(k) \widehat{A}_0^{-1}(k)
  \widehat{R}(k) \widehat{A}(k))(I - \alpha \widehat{D}^{-1}(k)
  \widehat{A}(k))
\end{align*}
has the same spectrum. Then, we will focus on studying the spectral
radius $\rho\left(\widehat{E}(k)\right)$ in the next section, in order
to find the optimal relaxation parameter $\alpha_\text{opt}$.

\section{Study of optimal relaxation parameters} \label{sec:optrel}
We begin by recalling the study performed by Hemker et. al. for the
Poisson equation.

\subsection{Hemker et. al. results}
In \S4.1 of \cite{Hemker2004}, a smoothing analysis is performed,
which is an important first step in LFA studies. A comparison of the
spectrum of the \emph{point} block-Jacobi and \emph{cell} block-Jacobi
smoother with a relaxation parameter optimized only via a smoothing
analysis, they were obtained by Hemker et. al. is shown in Figure
\ref{fig:hsmoother}.
\begin{figure}
  \includegraphics[width=0.6\textwidth]{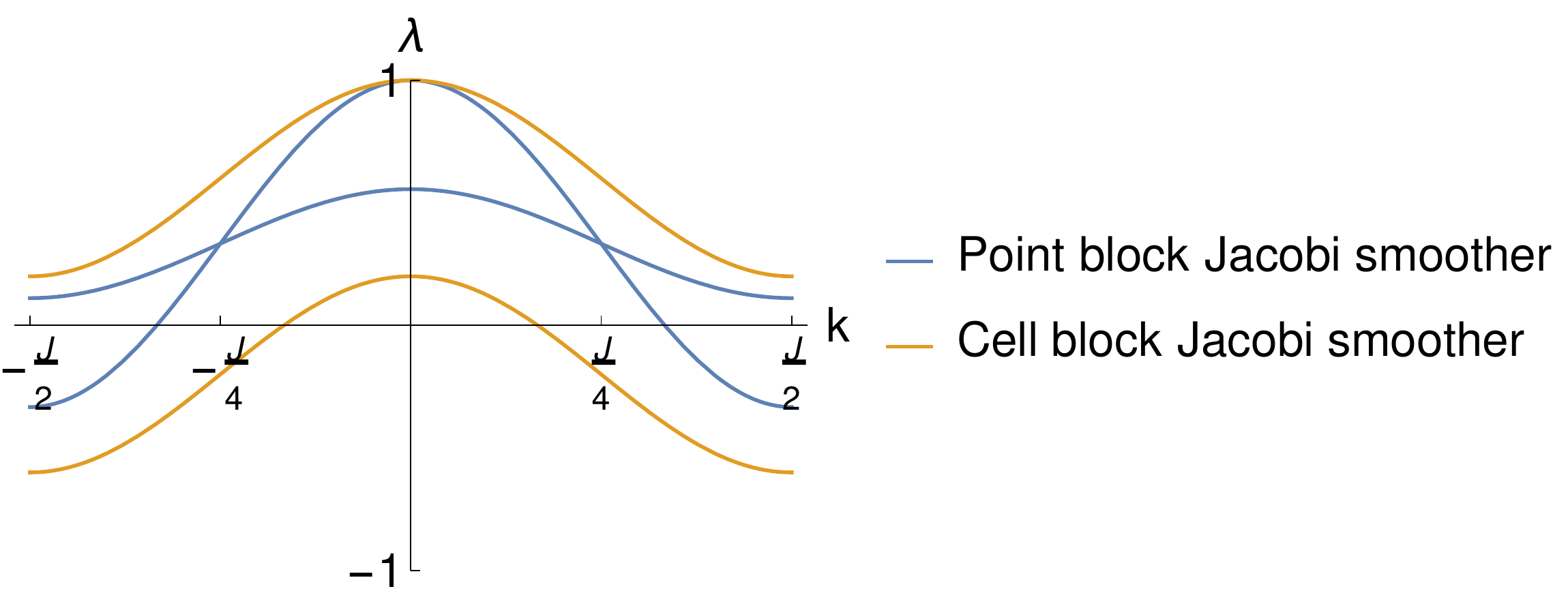}
  \caption{Spectrum of the \emph{point} block-Jacobi and \emph{cell}
    block-Jacobi smoothers for $\dd=2$, with optimized relaxation
    parameter without taking into account the coarse solver, following
    Hemker et. al. in \cite{Hemker2004}.}
  \label{fig:hsmoother}
\end{figure}
The smoothing analysis predicts an optimal relaxation parameter $4/5$
for the \emph{point} block-Jacobi smoother, and $2/3$ for the
\emph{cell} block-Jacobi smoother. We see that the smoothing
capabilities of the \emph{point} block-Jacobi smoother are better than
the \emph{cell} block-Jacobi smoother, since the upper half of the
spectrum corresponding to the higher frequencies is better damped
(equioscillation between $J/4$ and $J/2$).

In our study, we take into account the interaction of smoothing and
coarse correction when optimizing the relaxation parameter, in order
to get the best possible two level method, and we deduce explicit
formulas for the relaxation parameter. We will show that, for DG
penalization parameter values $\dd$ lower than a certain threshold
$\delta_c$, which we determine explicitly, the \emph{cell}
block-Jacobi smoother of Schwarz type leads to a more efficient
two-level method than the \emph{point} block-Jacobi smoother. This
threshold is higher than the frequently used DG penalization parameter
value $\dd=p(p+1)=2$ (where $p=1$ here is the polynomial degree). This
shows that, for these penalization regimes, it is of interest in
practice to use the \emph{cell} block-Jacobi smoother instead of the
\emph{point} block-Jacobi smoother which looks preferable based on the
smoothing analysis alone.

\subsection{Poisson equation}\label{sec:PoissonPoint}

We begin with the study of the Poisson equation, for which we can
completely quantify the optimal choice of the relaxation parameter in
the smoothing procedure to get the best error reduction in the two
level algorithm. The best choice is characterized by equioscillation
of the spectrum, in the sense that the absolute values of the maximum
and minimum eigenvalues of the error reduction operator are equal, and
is given in the following two Theorems.
\begin{theorem}[Optimal \emph{point} block-Jacobi two-level method]
  \label{thm:PoissonPoint}
  Let $A$ be the first order, nodal, SIPG discretization matrix of a
  1D Laplacian with periodic boundary conditions. The optimal relaxation
  parameter $\alpha_\text{opt}$, in order to maximize the error
  reduction of Algorithm \ref{alg:precit}, using a \emph{point}
  block-Jacobi smoother is given by
  \begin{equation}\label{AlphaOptBlockJacobiLaplace}
    \alpha_\text{opt}=\frac{(2 \dd-1)^2}{6 \dd^2-6 \dd+1}.
  \end{equation}
\end{theorem}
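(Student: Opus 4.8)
The plan is to compute the spectral radius $\rho(\widehat{E}(k))$ of the $4\times 4$ Fourier-transformed iteration operator explicitly as a function of the frequency parameter $k$ and the relaxation parameter $\alpha$, specializing all the building blocks $\widehat{A}$, $\widehat{D}_p$, $\widehat{R}$, $\widehat{P}$, $\widehat{A}_0$ to the Poisson case by setting $\tfrac{1}{\eps}=0$ (equivalently dropping every $\tfrac{h^2}{6\eps}$ and $\tfrac{h^2}{3\eps}$ term). In the Poisson case the matrices simplify substantially: the off-diagonal reaction couplings vanish, and the dependence on the mesh size $h$ scales out of the eigenvalue problem, leaving a spectrum that depends only on $\dd$, $\alpha$, and the frequency through $\theta := 2\pi k h$. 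First I would substitute these simplified blocks into $\widehat{E}(k) = (I - \widehat{P}\widehat{A}_0^{-1}\widehat{R}\widehat{A})(I - \alpha\widehat{D}_p^{-1}\widehat{A})$ and reduce the problem to an eigenvalue computation over the continuum of frequencies $\theta \in [0,\pi)$.

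\medskip

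The key simplification I expect is that, because the coarse correction $(I - \widehat{P}\widehat{A}_0^{-1}\widehat{R}\widehat{A})$ is a projection-like operator annihilating the low-frequency (coarse-representable) component, the iteration operator $\widehat{E}(k)$ will have a nontrivial spectrum supported essentially on the smoothing factor restricted to the complementary subspace. I would diagonalize or at least block-triangularize $\widehat{E}(k)$ exploiting the $(-1)^j$ structure and the pairing of frequencies $k$ and $k-J/2$ already visible in $\widehat{A}$ and $\widehat{A}_0$; this pairing is exactly what couples a fine-grid mode to its coarse-grid alias, so the relevant eigenvalues come in a low-dimensional invariant block for each frequency pair. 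The eigenvalues of that block are rational functions of $\cos\theta$, $\dd$, and $\alpha$. I would then parametrize the spectrum by $c := \cos\theta \in [-1,1]$ and extract the two eigenvalue branches $\lambda_{\pm}(c;\alpha,\dd)$ that govern $\rho(\widehat{E})$.

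\medskip

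With the eigenvalue branches in hand, I would invoke the equioscillation characterization stated in the text: the optimal $\alpha$ is the one for which the largest and smallest eigenvalues of $\widehat{E}$ over all frequencies have equal absolute value, i.e. $\max_c \lambda(c;\alpha_{\text{opt}},\dd) = -\min_c \lambda(c;\alpha_{\text{opt}},\dd)$. Concretely I would locate the extremal values of $\lambda_{\pm}$ over $c \in [-1,1]$ (typically attained at the endpoints $c=\pm 1$ or at an interior critical point where $\partial_c\lambda = 0$), express the positive and negative extrema as functions of $\alpha$, and solve the single scalar balance equation for $\alpha$. Carrying out this balance is where the formula $\alpha_{\text{opt}} = (2\dd-1)^2/(6\dd^2-6\dd+1)$ should drop out after algebraic simplification.

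\medskip

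\textbf{The main obstacle} will be controlling the algebraic complexity: even after the Poisson simplification, the $4\times 4$ structure produces eigenvalues that are roots of characteristic polynomials with coefficients that are rational in $\cos\theta$, $\dd$, and $\alpha$, so identifying which branch attains the global maximum and minimum over the full frequency range — and verifying that the extrema are indeed the equioscillating pair rather than being dominated by an interior critical point — requires care. I would manage this by first treating the extremes $\theta \to 0$ (lowest retained frequency) and $\theta \to \pi$ (highest frequency) analytically, since these typically give the competing positive and negative extremal eigenvalues, and then confirming monotonicity of the relevant branch in between so that no interior frequency beats the endpoints. Once the two competing extremal eigenvalues are pinned down as explicit functions of $\alpha$ and $\dd$, setting them equal in magnitude reduces to a quadratic (or lower) equation in $\alpha$, whose relevant root is \eqref{AlphaOptBlockJacobiLaplace}.
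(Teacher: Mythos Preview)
Your plan is essentially the paper's own approach: compute the two nonzero eigenvalues $\lambda_\pm$ of $\widehat{E}(k)$ explicitly (the other two vanish because the coarse correction has rank two), parametrize by a cosine variable, locate their extrema over the frequency range, and solve the equioscillation condition for $\alpha$. Two small details will differ from what you anticipate: the eigenvalues turn out to depend on $c_k=\cos(2\theta)$ rather than $\cos\theta$, and both the maximum of $\lambda_+$ and the minimum of $\lambda_-$ occur at the \emph{same} endpoint $c_k=1$ (i.e.\ $k=J/2$), so the balance $\lambda_+|_{k=J/2}=-\lambda_-|_{k=J/2}$ is a single linear equation in $\alpha$ (since $\lambda_\pm = 1+\alpha\cdot(\cdots)$), not a quadratic; the paper's main technical work is precisely your ``confirm monotonicity'' step, carried out by squaring $\partial_{c_k}\lambda_\pm$ and showing the resulting polynomial has no admissible real roots.
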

\begin{proof}
  We compute the spectrum of $\widehat{E}(k)$ and find its extrema for
  $-J/2 \le k \le J/2$. $\widehat{E}(k)$ has $4$ eigenvalues, two of
  which are zero since the coarse operator is of rank $2$. We focus on
  the non-zero eigenvalues $\lambda_+$ and $\lambda_-$, with $\lambda_+
  \ge \lambda_-$, shown as function of $k$ for several values of $\dd$
  in Figure \ref{fig:pointl},
  \begin{figure}
    \centering
    \begin{subfigure}{0.49\textwidth}
      \includegraphics[width=\textwidth]{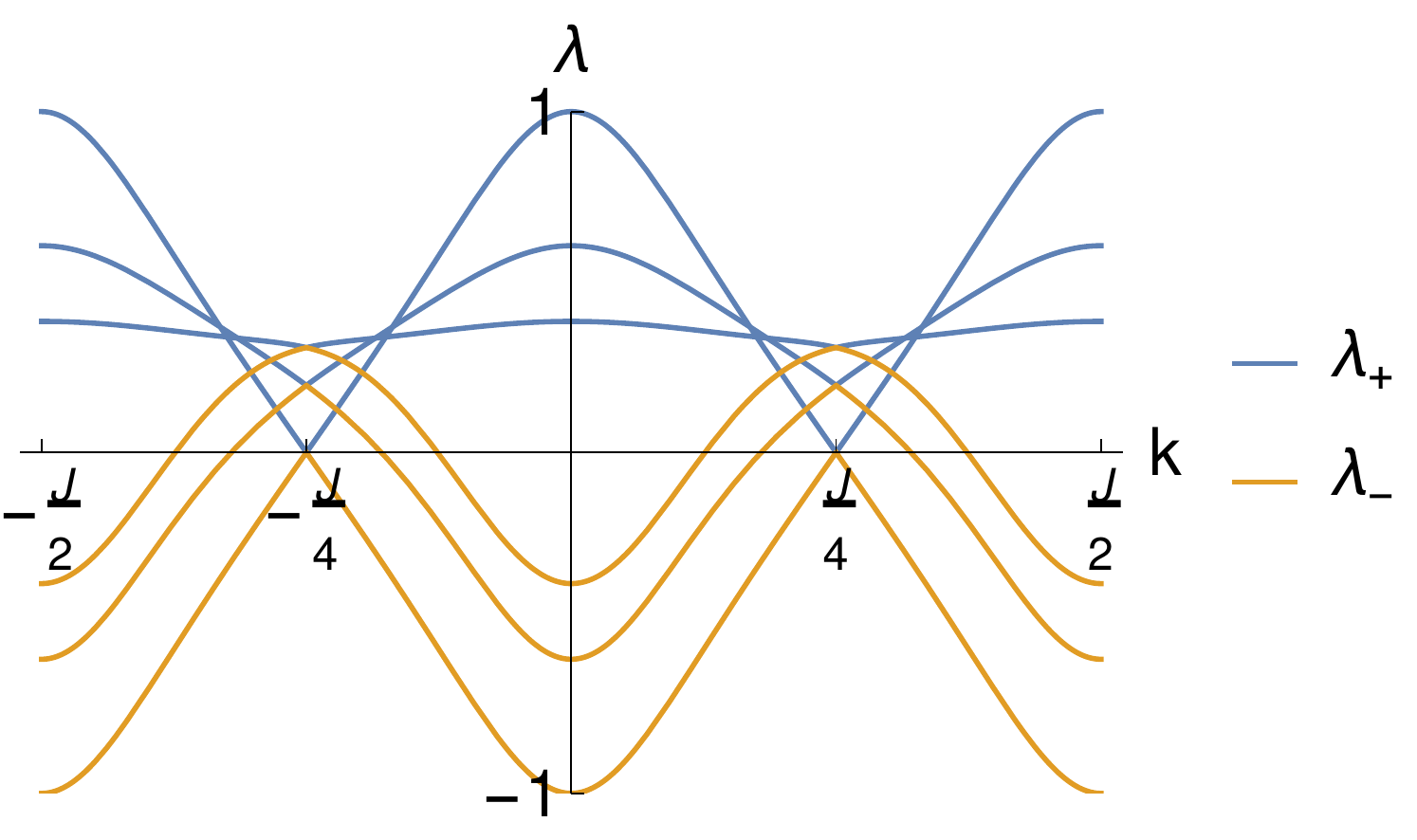}
      \caption{$\lambda_+$ and $\lambda_-$ for $\dd=1,1.2,2$ (in
        decreasing absolute value at $k=0$) using $\alpha_\text{opt}$.}
      \label{fig:pointl}
    \end{subfigure}
    \begin{subfigure}{0.49\textwidth}
      \includegraphics[width=\textwidth]{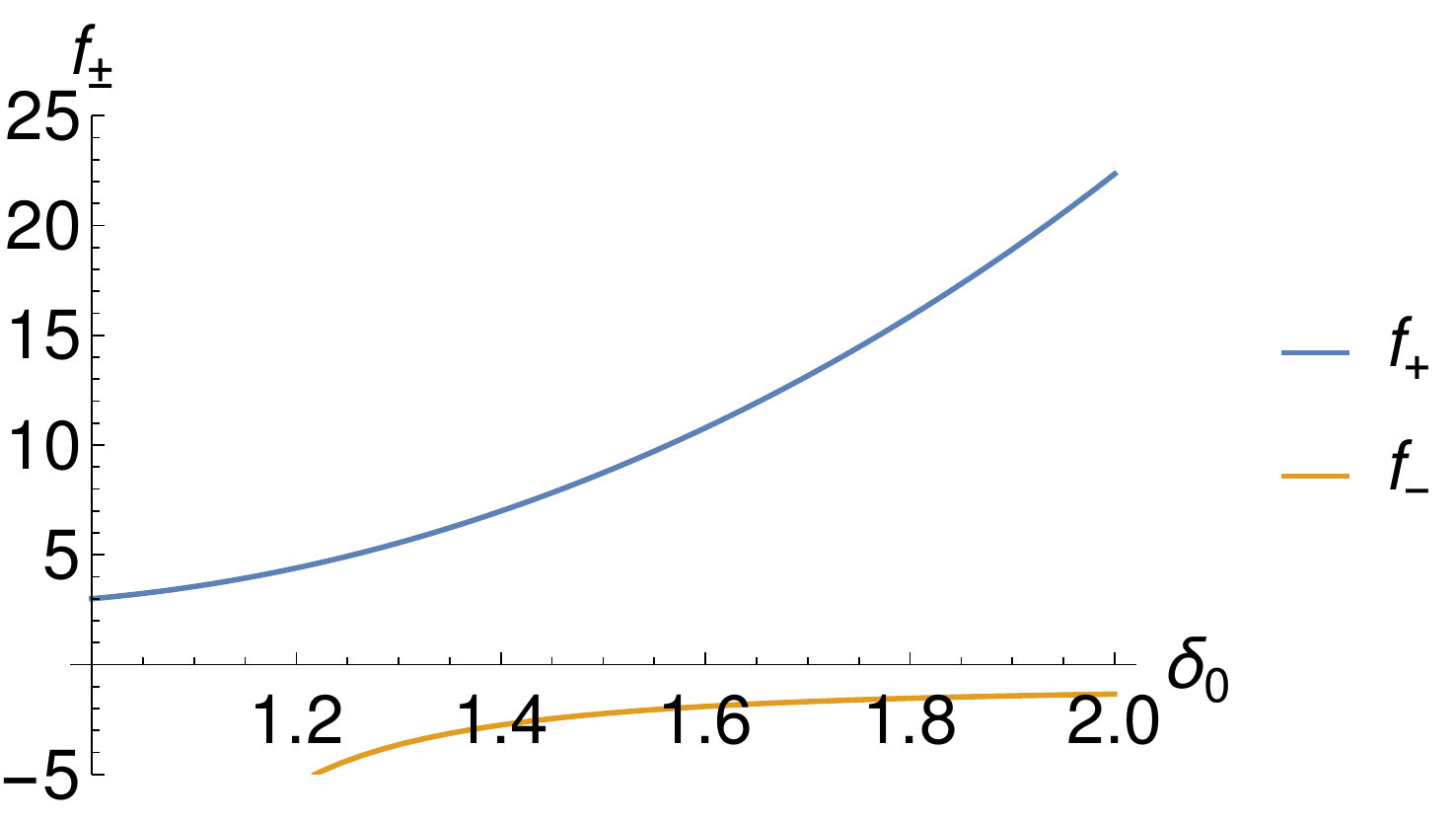}
      \caption{$f_+$ and $f_-$.}
      \label{fig:pointfpm}
    \end{subfigure}
    \caption{}
  \end{figure}
  \begin{equation}\label{eqn:pointrho}
    \scalemath{0.9}{\lambda_\pm = 1+\alpha\frac{-1+8 \dd-10
        \dd^2-\left(2 \dd^2-4 \dd+1\right) c_k \pm \sqrt{(c_k+1)(1-\dd)
          \left(c_k-f_-\right) \left(c_k-f_+\right)}}{(2 \dd-1) (4 \dd-c_k-1)}},
  \end{equation}
  where $c_k=\cos \left(\frac{4 \pi k}{J}\right)$ contains the
  dependence on $k$, and
  \begin{equation*}
    f_\pm(\dd) = \scalemath{0.77}{\frac{1-6\dd +8 \dd^2-8 \dd^3+4
        \dd^4 \pm \sqrt{1-8 \dd+16 \dd^2-48 \dd^3+120 \dd^4-160 \dd^5+128
          \dd^6-64 \dd^7+16 \dd^8}}{2 (\dd-1)}}.
  \end{equation*}
  
  The function $f_\pm(\dd)$ satisfies the following properties for
  $\dd \ge 1$, as one can see from a direct computation (see Figure
  \ref{fig:pointfpm}):
  \begin{enumerate}
  \item $f_+(\dd)$ is monotonically increasing,
    $\lim_{\dd\rightarrow 1} f_+(\dd) = 3$ and
    $\lim_{\dd\rightarrow \infty} f_+(\dd) \rightarrow \infty$,
    therefore
    $\left(c_k-f_+(\dd)\right) < 0$;
  \item $f_-(\dd)$ is monotonically increasing,
    $\lim_{\dd\rightarrow 1} f_-(\dd) \rightarrow -\infty$ and
    $\lim_{\dd\rightarrow \infty} f_-(\dd) = -1$, therefore
    $\left(c_k-f_+(\dd)\right) > 0$;
  \item $1 - \dd \le 0$ and $c_k + 1 \ge 0$, and thus with (1) and
    (2) we have
    $(c_k+1)(1-\dd)\left(c_k-f_-(\dd)\right)\left(c_k-f_+(\dd)\right)
    \ge 0$, and therefore $\lambda_{\pm}(\dd) \in \mathbb{R}$;
  \item $\lim_{\dd\rightarrow 1}
    (c_k+1)(1-\dd)\left(c_k-f_-(\dd)\right)\left(c_k-f_+(\dd)\right)
    = (c_k+1)\left(3-c_k\right)$, therefore $\lambda_+(\dd) =
    \lambda_-(\dd) \iff c_k = -1$, i.e. $k=J/4$.
  \end{enumerate}
  In order to obtain the extrema of $\lambda_\pm$ in $k$, we need to
  study $\frac{\partial \lambda_\pm}{\partial k}$, and since
  $\frac{\partial \lambda_\pm}{\partial k} = \frac{\partial
    \lambda_\pm}{\partial c_k} \frac{\partial c_k}{\partial k}$, we first
  compute
  \begin{align*}
    &\begin{aligned}
      &\frac{\partial \lambda_\pm}{\partial c_k} = \\
      &\alpha \bigg[\scalemath{0.9}{-1+9 \dd-28 \dd^2+64 \dd^3-64
        \dd^4+32 \dd^5 +\left(-3+23 \dd+64 \dd^3-64 \dd^4-56 \dd^2+32
          \dd^5\right) c_k}\\
      &\scalemath{0.85}{+\left(-3+15 \dd-12 \dd^2\right) c_k^2+(\dd-1)
        c_k^3 \pm 16 (1-\dd) \dd^2 \sqrt{(c_k+1)(1-\dd) \left(c_k-f_-\right)
          \left(c_k-f_+\right)}\bigg]\bigg/}\\
      &\left(\pm 2 (2 \dd-1) (-4 \dd+c_k+1)^2
        \sqrt{(c_k+1)(1-\dd)\left(c_k-f_-\right) \left(c_k-f_+\right)}\right).
    \end{aligned}
  \end{align*}
  
  We begin by looking for zeros of the numerator; separating the
  term with the square root and squaring both sides of the equation
  leads to
  \begin{align*}
    \begin{aligned}
      &(-4 \dd+c_k+1)^2 \\
      &\bigg[1-10 \dd+41 \dd^2-144 \dd^3+256 \dd^4-192 \dd^5+64 \dd^6\\
      &\left.+\left(128 \dd^6-384 \dd^5+512 \dd^4-368 \dd^3+148
          \dd^2-40 \dd+4\right) c_k\right.\\
      &\left.+\left(64 \dd^6-192 \dd^5+256 \dd^4-240 \dd^3+158
          \dd^2-52 \dd+6\right) c_k^2\right.\\
      &+\left(-16 \dd^3+36 \dd^2-24 \dd+4\right) c_k^3+\left(\dd^2-2
        \dd+1\right) c_k^4\bigg]=0.
    \end{aligned}
  \end{align*}
  This operation might add spurious roots to the original expression,
  so we analyze them individually.  The left hand side is a product of
  two factors, the second of which is a 4th degree polynomial in
  $c_k$. The application of the Cardano-Tartaglia formula leads to
  complex roots for $\dd \ge 1$, leaving only two real roots coming from
  the first factor, both at $c_k = -1 + 4 \dd$, but $\dd \ge 1$ and
  $|c_k| \le 1$, so there is no real root of $\frac{\partial
    \lambda_\pm}{\partial c_k}$. We deduce that $\frac{\partial
    \lambda_\pm}{\partial k}$ is zero only where $\frac{\partial
    c_k}{\partial k}=0$, i.e., $k=J/4,J/2$.

  We remark at this point that because the dependency on $k$ is
  contained in $c_k$, the eigenvalues at $k=0$ will be the same than at
  $k=J/2$, so it suffices to consider only the case $k=J/2$.
   
  We realize as well that the denominator vanishes for $c_k=-1$
  (i.e. $k = J/4$), and for the derivative when approaching this value,
  we get $\lim_{k \rightarrow J/4} \frac{\partial \lambda_\pm}{\partial
    k} = \lim_{k \rightarrow J/4} \frac{\partial \lambda_\pm}{\partial
    c_k} \frac{\partial c_k}{\partial k}$; multiplying and dividing by the
  factor $\sqrt{(c_k+1) (1-\dd) \left(c_k-f_-\right)
    \left(c_k-f_+\right)}$ we obtain
  \begin{align*}
    \lim_{k \rightarrow J/4} \frac{\partial \lambda_\pm}{\partial k}
    =& \scalemath{0.7}{\lim_{k \rightarrow J/4} \frac{\frac{\partial
       c_k}{\partial k}}{\sqrt{(c_k+1)(1-\dd)\left(c_k-f_-\right)
       \left(c_k-f_+\right)}} \lim_{k \rightarrow J/4} \frac{\partial
       \lambda_\pm}{\partial c_k} \sqrt{(c_k+1)(1-\dd)\left(c_k-f_-\right)
       \left(c_k-f_+\right)}} \\
    =& \scalemath{0.9}{
       \begin{cases} \displaystyle \frac{2 \sqrt{2} \pi}{\sqrt{\dd} J}
         \lim_{k \rightarrow J/4} \frac{\partial \lambda_\pm}{\partial c_k}
         \sqrt{(c_k+1)(1-\dd)\left(c_k-f_-\right) \left(c_k-f_+\right)}, & k
         \rightarrow (J/4)^+, \\ 
         \displaystyle -\frac{2 \sqrt{2}
           \pi}{\sqrt{\dd} J} \lim_{k \rightarrow J/4} \frac{\partial
           \lambda_\pm}{\partial c_k} \sqrt{(c_k+1)(1-\dd)\left(c_k-f_-\right)
           \left(c_k-f_+\right)}, & k \rightarrow (J/4)^-,
       \end{cases}} \\
    =& 
       \begin{cases} 
         \displaystyle \pm \frac{\sqrt{2} \alpha \pi}{(2 \dd - 1)
           \sqrt{\dd} J}, & k \rightarrow (J/4)^+, \\
         \displaystyle \mp \frac{\sqrt{2} \alpha \pi}{(2 \dd - 1)
           \sqrt{\dd} J}, & k \rightarrow (J/4)^-,
       \end{cases}
  \end{align*}
  therefore at $k=J/4$, $\lambda_+$ has a minimum and $\lambda_-$ has
  a maximum as observed in Fig. \ref{fig:pointl}.
  
  In order to determine if the extremum at $k=J/2$ is a minimum or a
  maximum, we compute the second derivative,
  \begin{equation*}
    \left.\frac{\partial^2 \lambda_+}{\partial k^2}\right|_{k=J/2} =
    \frac{8 \pi ^2 \alpha (1-2 \dd (2 (\dd-2) \dd+3))}{(2 \dd-1)^3 (2
      (\dd-1) \dd+1) J^2} < 0 \ \Longleftrightarrow \ 1 - 6 \dd + 8 \dd^2 -
    4 \dd^3 < 0,
  \end{equation*}
  which always holds for $\dd \ge 1$, and thus at $k=J/2$, $\lambda_+$
  has a maximum. Similarly, for $\lambda_-$, we find
  \begin{equation*}
    \left.\frac{\partial^2 \lambda_-}{\partial k^2}\right|_{k=J/2} =
    \frac{8 \pi ^2 \alpha (2 \dd (2 (\dd-1) \dd+1)-1)}{(2 \dd (\dd (2
      \dd-3)+2)-1) J^2} < 0 \ \Longleftrightarrow\ -1 + 2 \dd - 4 \dd^2 + 4
    \dd^3 < 0,
  \end{equation*}
  which never holds for $\dd \ge 1$, and thus at $k=J/2$, $\lambda_-$
  has a minimum, as we can see in Fig. \ref{fig:pointl}.
  
  To minimize the spectral radius, due to the monotonicity of the
  eigenvalues in the parameter $\alpha$, we can minimize the absolute
  value of $\lambda_\pm$ by just centering the eigenvalue distribution
  around zero. Using the explicit formulas for the extrema, this is
  achieved by equioscillation when the relaxation parameter
  $\alpha_\text{opt}$ satisfies $\lambda_+\big|_{k=J/2} =
  -\lambda_-\big|_{k=J/2}$, which gives
  \eqref{AlphaOptBlockJacobiLaplace}.
\end{proof}

\begin{theorem}[Optimal \emph{cell} block-Jacobi two-level
  method]\label{thm:PoissonCell}
  Let $A$ be the first order, nodal, SIPG discretization matrix of a
  1D Laplacian with periodic boundary conditions. The optimal relaxation
  parameter $\alpha_\text{opt}$, in order to maximize the error
  reduction of Algorithm \ref{alg:precit} using a \emph{cell}
  block-Jacobi smoother is given by
  \begin{align*}
    \alpha_\text{opt}=
    \begin{cases}
      \frac{\dd(2\dd-1)}{2\dd^2-1}, &\text{ for } 1 \le \dd \le
      \widetilde{\dd}_+, \\
      \frac{2 \dd^2 (2 \dd-1)}{\dd \left| 2 \dd^2-4 \dd+1 \right| +2
        \dd^3+4 \dd^2-5 \dd+1}, &\text{ for } \widetilde{\dd}_+ \le \dd \le
      \widetilde{\dd}_-, \\
      \frac{2\dd^2}{2\dd^2+\dd-1}, &\text{ for } \widetilde{\dd}_- \le
      \dd,
    \end{cases}
  \end{align*}
  where $\widetilde{\dd}_+=\frac{1}{12} \left(8+\sqrt[3]{152-24
      \sqrt{33}}+2 \sqrt[3]{19+3 \sqrt{33}}\right)= 1.41964\dots$ and
  $\widetilde{\dd}_-=3/2$.
\end{theorem}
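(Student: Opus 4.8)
The plan is to mirror the proof of Theorem~\ref{thm:PoissonPoint}, replacing the \emph{point} smoother $\widehat{D}_p$ by the \emph{cell} smoother $\widehat{D}_c$, and to again characterize the optimum by equioscillation of the spectrum. First I would pass to the Poisson limit $\eps\to\infty$ in $\widehat{A}$, $\widehat{D}_c$, $\widehat{R}$, $\widehat{P}$ and $\widehat{A}_0$, assemble the block $\widehat{E}(k)=(I-\widehat{P}\widehat{A}_0^{-1}\widehat{R}\widehat{A})(I-\alpha\widehat{D}_c^{-1}\widehat{A})$, and compute its spectrum. As before the coarse operator has rank $2$, so two eigenvalues vanish and the two non-zero eigenvalues $\lambda_\pm$ depend on $k$ only through $c_k=\cos(4\pi k/J)\in[-1,1]$ and on $\alpha$. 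It therefore again suffices to locate the extrema of $\lambda_\pm$ over $c_k\in[-1,1]$.

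Next I would differentiate, writing $\partial\lambda_\pm/\partial k=(\partial\lambda_\pm/\partial c_k)(\partial c_k/\partial k)$, and search for critical points. This is where the cell case departs from the point case: whereas for $\widehat{D}_p$ the Cardano-Tartaglia analysis left only complex interior roots, so that the extrema sat at the endpoints $c_k=\pm1$, for $\widehat{D}_c$ I expect the numerator of $\partial\lambda_\pm/\partial c_k$ to possess a genuine real root $c_k^\star(\dd)$ that migrates into the admissible interval as $\dd$ increases. The endpoints $c_k=1$ (i.e.\ $k=0$ or $k=J/2$) and $c_k=-1$ (i.e.\ $k=J/4$) remain candidate extrema; the degenerate behaviour of the square-root prefactor at $c_k=-1$ I would treat with the same one-sided limit argument as in Theorem~\ref{thm:PoissonPoint}, and classify each candidate as a maximum or minimum through the sign of the second derivative at $c_k=1$ and the one-sided limits at $c_k=-1$.

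The three regimes then emerge from tracking which extremum realizes the spectral radius as $\dd$ ranges over $[1,\infty)$, and from imposing in each regime the equioscillation condition $\max_k\lambda_+=-\min_k\lambda_-$, a relation linear in $\alpha$ that yields the stated closed forms. Each threshold is the value of $\dd$ at which the extremizing frequency switches from one candidate to another; continuity of $\alpha_\text{opt}$ there forces the optimal-parameter expressions of the two adjacent regimes to coincide. For the inner threshold this coincidence reduces to the cubic $4\dd^3-8\dd^2+4\dd-1=0$, whose unique real root is $\widetilde{\dd}_+$ in the stated Cardano form, while for the outer threshold it reduces to $2\dd^2-3\dd=0$, giving $\widetilde{\dd}_-=3/2$. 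The absolute value $|2\dd^2-4\dd+1|$ in the middle formula records the sign of the factor produced by the interior critical value of $\lambda_\pm$ on that intermediate range; where it is negative the middle formula collapses to $2\dd^2/(4\dd-1)$, which matches $2\dd^2/(2\dd^2+\dd-1)$ at $\dd=3/2$.

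The principal obstacle is the case bookkeeping. The delicate points are, first, to show that the interior root $c_k^\star(\dd)$ is a true extremum of $\lambda_\pm$ rather than a spurious solution introduced when the square root is isolated and squared, and second, to determine unambiguously, for every $\dd$, which pair among the endpoint and interior extrema governs the spectral radius, so that the equioscillation is imposed between the correct two quantities. Verifying that the resulting piecewise $\alpha_\text{opt}$ is continuous at $\widetilde{\dd}_+$ and $\widetilde{\dd}_-$ and that it indeed minimizes $\rho(\widehat{E}(k))$ uniformly in $k$ provides the consistency check that ties the three cases together.
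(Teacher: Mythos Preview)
Your overall strategy---compute $\lambda_\pm$, locate their extrema in $c_k\in[-1,1]$, classify maxima and minima, and equioscillate---is correct and matches the paper. However, your central expectation about \emph{why} three regimes appear is wrong, and following it would send you down a blind alley.

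You write that for the cell smoother ``the numerator of $\partial\lambda_\pm/\partial c_k$ [should] possess a genuine real root $c_k^\star(\dd)$ that migrates into the admissible interval as $\dd$ increases.'' In fact the paper shows exactly the opposite: after isolating and squaring the square root in $\partial\lambda_\pm/\partial c_k=0$, the resulting polynomial in $c_k$ simplifies to $c_k^2+(2-8\dd)c_k+(16\dd^2-8\dd+1)=0$, whose only root is $c_k=4\dd-1\ge 3$ for $\dd\ge 1$, hence outside $[-1,1]$. There is therefore \emph{no} interior critical point, and the extrema of $\lambda_\pm$ always sit at the endpoints $c_k=\pm 1$ (i.e.\ $k=J/4$ and $k=J/2$), just as in the point case.

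The three regimes arise from a different mechanism: at the special values $\widetilde\dd_\pm$ the eigenvalue curves become \emph{flat} in $k$ (one of $\lambda_+$ or $\lambda_-$ is independent of $c_k$), and as $\dd$ crosses each threshold the endpoint that carries the maximum of $\lambda_+$ (respectively the minimum of $\lambda_-$) swaps from $c_k=1$ to $c_k=-1$ or vice versa. Concretely, the sign of $\partial^2\lambda_+/\partial k^2$ at $k=J/4$ and at $k=J/2$ changes at the real root of $4\dd^3-8\dd^2+4\dd-1$, giving $\widetilde\dd_+$, and the sign of $\partial^2\lambda_-/\partial k^2$ at those points changes at $2\dd-3=0$, giving $\widetilde\dd_-=3/2$. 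The equioscillation pairs are then $(\lambda_+|_{k=J/2},\lambda_-|_{k=J/2})$, $(\lambda_+|_{k=J/4},\lambda_-|_{k=J/2})$, and $(\lambda_+|_{k=J/4},\lambda_-|_{k=J/4})$ in the three ranges, yielding the stated $\alpha_{\text{opt}}$. The absolute value in the middle formula comes from evaluating $\lambda_\pm$ at the endpoints, not from any interior root.
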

\begin{proof}
  As in the proof of Theorem \ref{thm:PoissonPoint}, we compute the
  spectrum of $\widehat{E}(k)$ and find its extrema for $-J/2 \le k
  \le J/2$. Again $\widehat{E}(k)$ has $4$ eigenvalues, two of which are
  zero.

  The non-zero eigenvalues $\lambda_+$ and $\lambda_-$ are real, with
  $\lambda_+ \ge \lambda_-$, and are given by
  \begin{align}\label{eqn:cellrho}
    \lambda_\pm = 1+\alpha \left(\frac{2+\dd\left(c_k-4\dd-1\right)
    \pm \sqrt{\left(\dd^2-2\right) \left(c_k- f_- \right)
    \left(c_k-f_+\right)}}{\dd\left(4\dd-c_k-1\right)}\right),
  \end{align}
  where $c_k=\cos \left(\frac{4 \pi k}{J}\right)$ and $f_\pm(\dd) =
  \frac{\dd\left(4 \dd^2-7 \dd +2 \right) \pm 2 \sqrt{(2 \dd - 3) (4
      \dd^3 - 8 \dd^2 + 4 \dd - 1)}}{\dd^2-2}$, (see
  Figs. \ref{fig:celll1}, \ref{fig:celll2} and \ref{fig:celll3}).
  \begin{figure}
    \centering
    \begin{subfigure}{0.49\textwidth}
      \includegraphics[width=\textwidth]{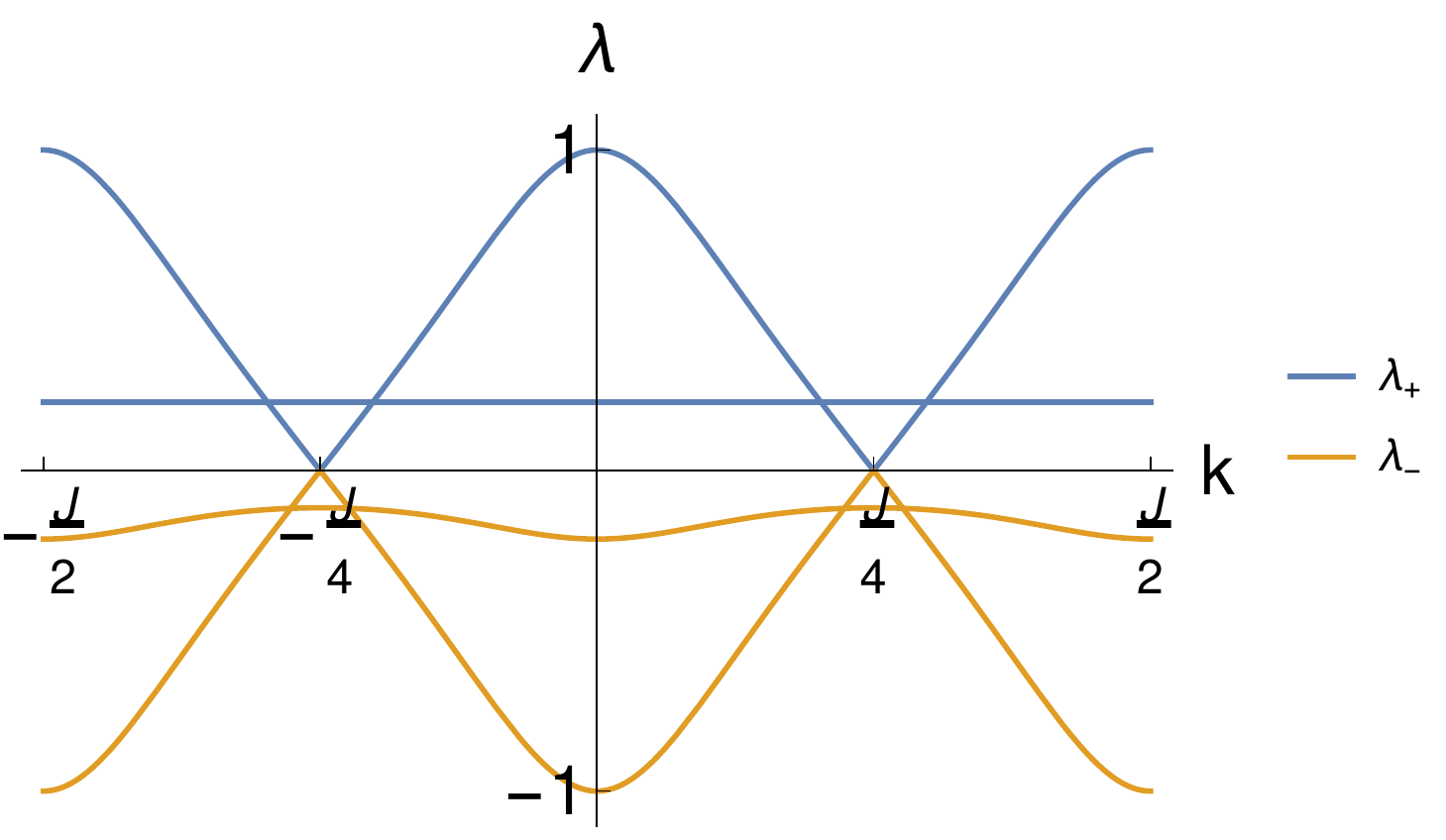}
      \caption{$\lambda_+$ and $\lambda_-$ for
        $\dd=1,\widetilde{\dd}_+$ (in decreasing absolute value at $k=0$)
        using $\alpha_\text{opt}$.}
      \label{fig:celll1}
    \end{subfigure}
    \begin{subfigure}{0.49\textwidth}
      \includegraphics[width=\textwidth]{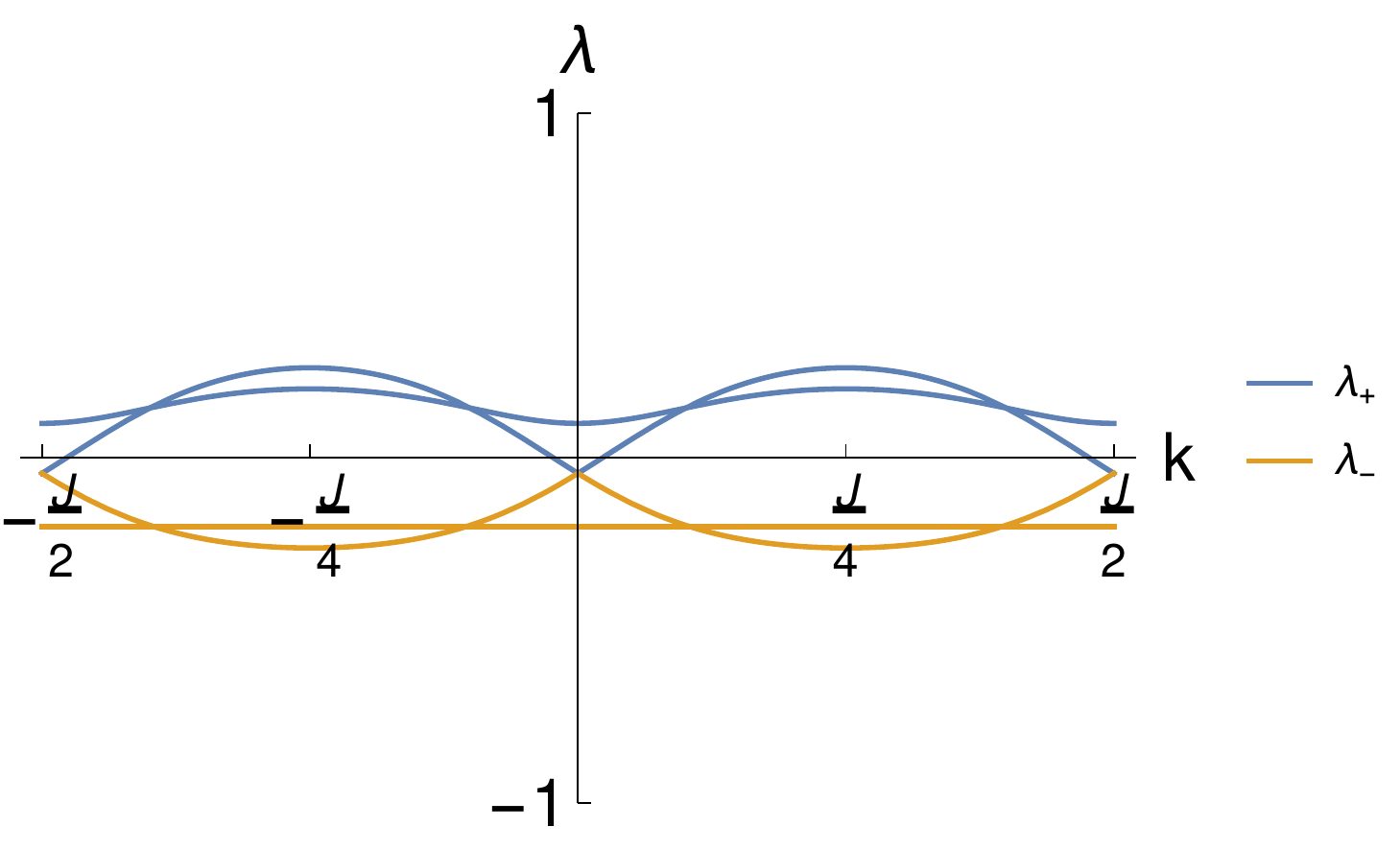}
      \caption{$\lambda_+$ and $\lambda_-$ for
        $\dd=\widetilde{\dd}_-,\frac{2+\sqrt{2}}{2}$ (in decreasing absolute
        value at $k=0$) using $\alpha_\text{opt}$.}
      \label{fig:celll2}
    \end{subfigure} \\
    \begin{subfigure}{0.49\textwidth}
      \includegraphics[width=\textwidth]{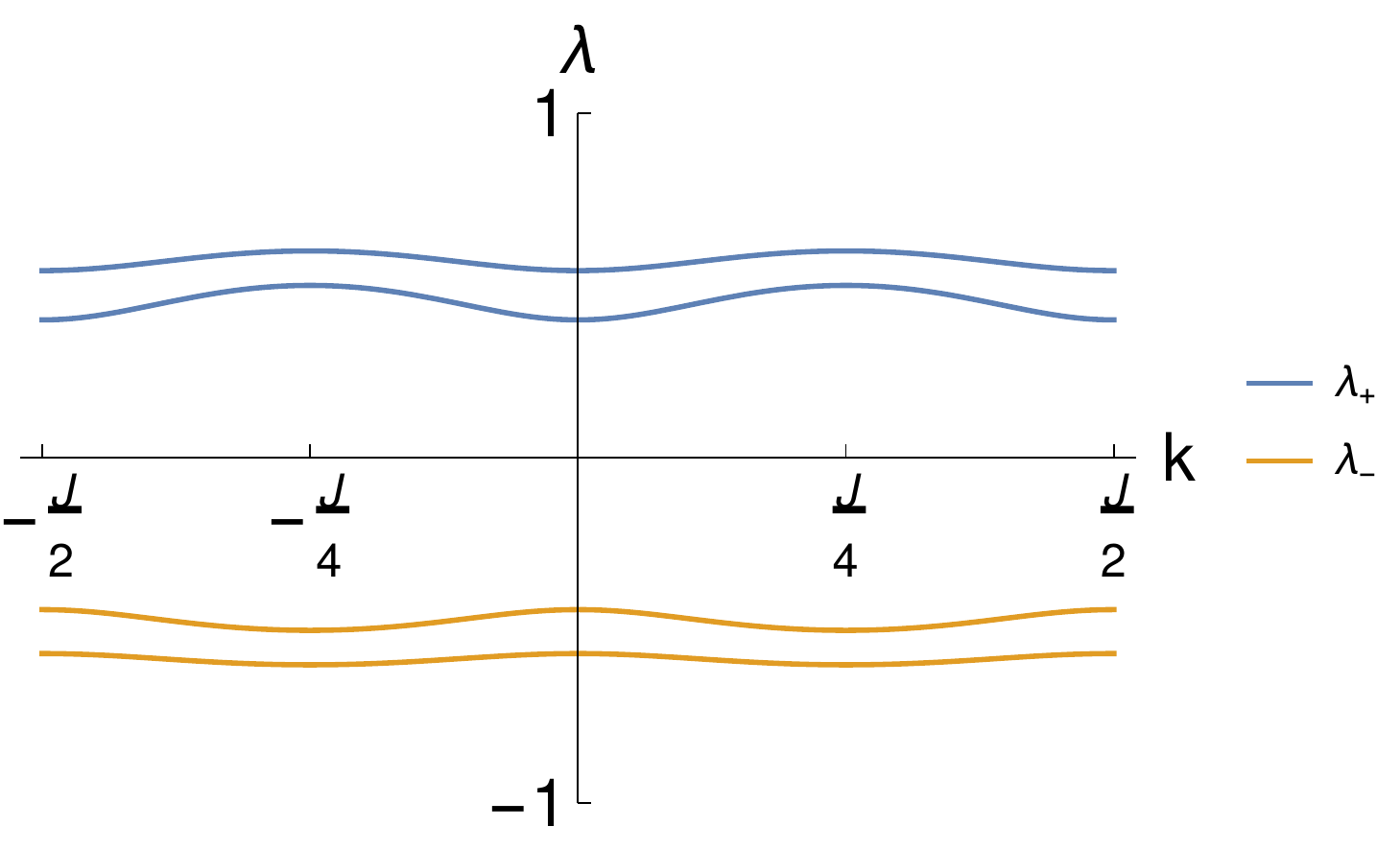}
      \caption{$\lambda_+$ and $\lambda_-$ for $\dd=3,4$ (in increasing
        absolute value at $k=0$) using $\alpha_\text{opt}$.}
      \label{fig:celll3}
    \end{subfigure}
    \begin{subfigure}{0.49\textwidth}
      \includegraphics[width=\textwidth]{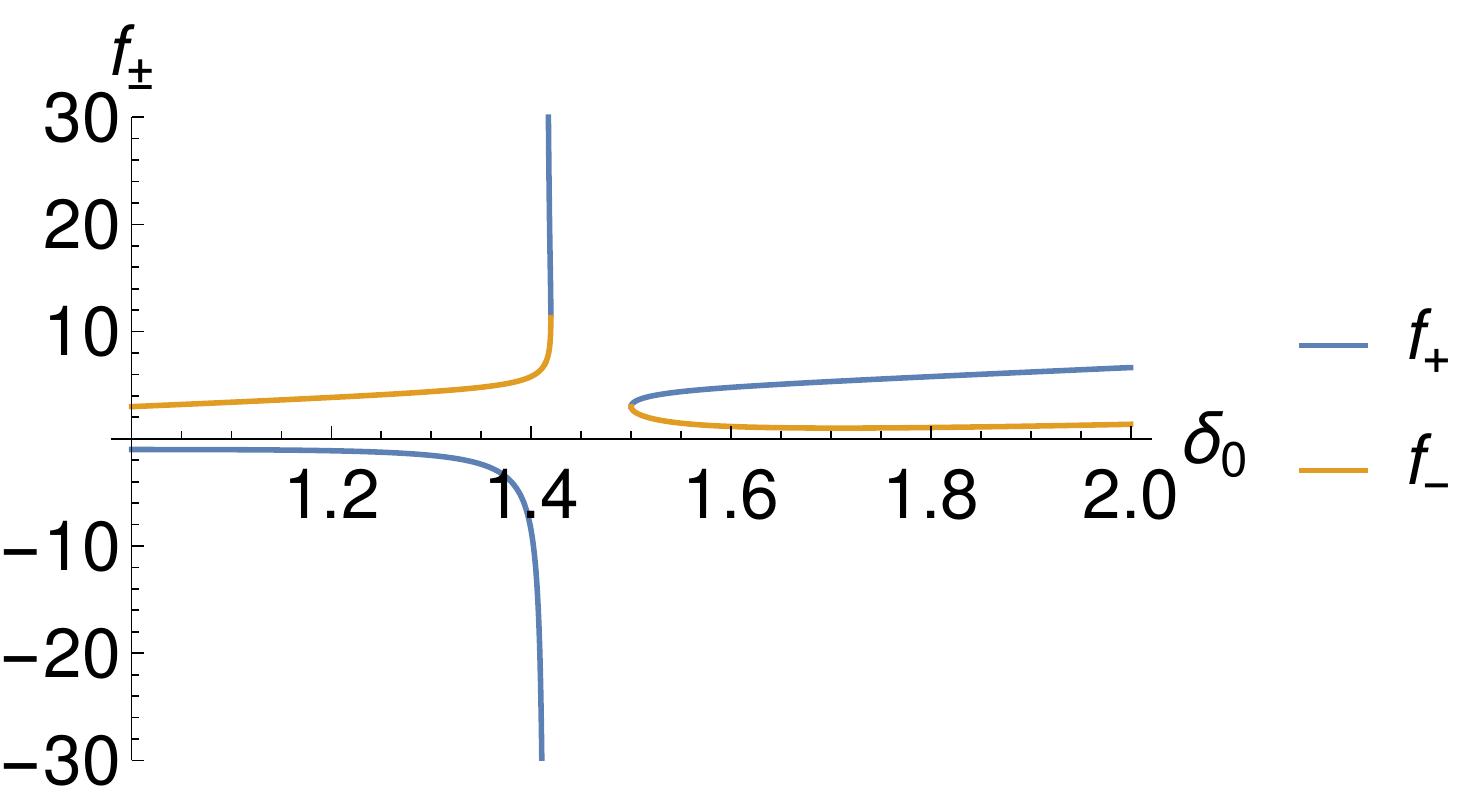}
      \caption{$f_+$ and $f_-$.}
      \label{fig:cellfpm}
    \end{subfigure}
    \caption{}
  \end{figure}
  A direct computation shows for $\dd \ge 1$ that (see Fig. \ref{fig:cellfpm})
  \begin{enumerate}
  \item $f_+=-1 \iff \dd = 1$,
  \item $f_-=1 \iff \dd=\frac{2 + \sqrt{2}}{2}$,
  \item $f_\pm \notin \mathbb{R} \iff \dd \in (\sqrt{2},\frac{2 +
      \sqrt{2}}{2})$,
  \item elsewhere $|f_\pm| > 1$.
  \end{enumerate}
  To find the extrema of $\lambda_\pm$ in $k$, we compute again the
  derivative $\frac{\partial \lambda_\pm}{\partial k} = \frac{\partial
    \lambda_\pm}{\partial c_k} \frac{\partial c_k}{\partial k}$ and obtain
  \begin{align}\label{eqn:dl11}
    &\begin{aligned}
      \frac{\partial \lambda_\pm}{\partial c_k} = \scalemath{0.8}{
        \alpha \frac{6 - 26 \dd + 50 \dd^2 - 24 \dd^3 + \left(6 \dd^2-10
            \dd+2\right) c_k \mp \sqrt{4 (\dd-1)^2 \left(\dd^2-2\right)
            \left(c_k-f_-\right) \left(c_k-f_+\right)}}{\pm \dd (-4 \dd+c_k+1)^2
          \sqrt{\left(\dd^2-2\right) \left(c_k-f_-\right)
            \left(c_k-f_+\right)}}}.
    \end{aligned}
  \end{align}
  We now look for roots of the numerator
  \begin{multline}\label{eqn:dl12}
    6 - 26 \dd + 50 \dd^2 - 24 \dd^3 + \left(6 \dd^2-10 \dd+2\right)
    c_k \\
    \mp \sqrt{4 (\dd-1)^2 \left(\dd^2-2\right) \left(c_k-f_-\right)
      \left(c_k-f_+\right)}=0.
  \end{multline}
  We first note that if $f_-=f_+=f$, i.e. $(2 \dd - 3) (4 \dd^3 - 8
  \dd^2 + 4 \dd - 1)=0$, we have
  \begin{multline*}
    6 - 26 \dd + 50 \dd^2 - 24 \dd^3 \pm f \sqrt{4 (\dd-1)^2
      \left(\dd^2-2\right)} \\ 
    + \left(6 \dd^2-10 \dd+2 \mp \sqrt{4
        (\dd-1)^2 \left(\dd^2-2\right)} \right) c_k = 0.
  \end{multline*}
  The factor multiplying the $c_k$ has roots,
  \begin{gather}
    \begin{gathered}\label{eqn:dlroots}
      6 \dd^2-10 \dd+2 \mp \sqrt{4 (\dd-1)^2 \left(\dd^2-2\right)} = 0
    \end{gathered}\\
    \begin{aligned}
      \implies & \left(-6 \dd^2+10 \dd-2 \right)^2 = 4 (\dd-1)^2
      \left(\dd^2-2\right) \\
      \iff & 8 \dd^4-28 \dd^3+32 \dd^2-14 \dd+3 = 0 \\
      \iff & (2 \dd - 3) (4 \dd^3 - 8 \dd^2 + 4 \dd - 1) = 0,
    \end{aligned}\nonumber
  \end{gather}
  where we might have added spurious roots to the original expression
  by squaring both sides, so we analyze them individually. We see that
  this is the same condition for $f_-=f_+=f$. There are, therefore,
  $\widetilde{\dd}_\pm$ such that $\frac{\partial \lambda_\pm}{\partial
    k}=0$ independently of $k$. Such $\widetilde{\dd}_\pm$ are found by
  obtaining the real roots of the polynomial from equation
  \eqref{eqn:dlroots},
  \begin{align*} 
    \widetilde{\dd}_+=&\frac{1}{12} \left(8+\sqrt[3]{152-24 \sqrt{33}}
                        + 2 \sqrt[3]{19+3 \sqrt{33}}\right) = 1.41964\dots , \\
    \widetilde{\dd}_-=&\frac32.
  \end{align*}
  We now take equation \eqref{eqn:dl12} and compute the roots with
  respect to $c_k$,
  \begin{multline*}
    \left(6 - 26 \dd + 50 \dd^2 - 24 \dd^3 + \left(6 \dd^2-10
        \dd+2\right) c_k\right)^2 =\\
    4 (\dd-1)^2 \left(\dd^2-2\right) \left(c_k-f_-\right)
    \left(c_k-f_+\right);
  \end{multline*}
  a simplification gives
  \begin{align*}
    c_k^2+(2-8 \dd) c_k + \left(16 \dd^2-8 \dd+1\right) = 0,
  \end{align*}
  which has two roots that are equal to $c_k = -1 + 4 \dd$, but $\dd
  \ge 1$, so there is no real root of $\frac{\partial
    \lambda_\pm}{\partial c_k}$. We deduce from this and the chain rule,
  that $\frac{\partial \lambda_\pm}{\partial k}$ is zero only where
  $\frac{\partial c_k}{\partial k}=0$, hence the roots are located at $k
  = J/4, J/2$ (i.e. $c_k=1,-1$ ), except when $\lambda_+$ or $\lambda_-$
  do not depend on $k$.

  We remark at this point that because the dependency on $k$ is
  contained in $c_k$, the eigenvalues at $k=0$ will be the same than at
  $k=J/2$. In what follows, we will only analyze the case $k=J/2$.

  We see that the denominator of \eqref{eqn:dl11} has roots at
  \begin{enumerate}
  \item $\dd = \sqrt{2}$, but given that $|c_k|\le 1$ we have
    \begin{equation*}
      \lim_{\dd \rightarrow \sqrt{2}} \left(\dd^2-2\right)
      \left(c_k-f_-\right) \left(c_k-f_+\right) = -4 (-50 + 35 \sqrt{2} +
      (-7 + 5 \sqrt{2}) c_k) \ne 0 ;
    \end{equation*}
    since $f_\pm$ contains the term $(\dd^2-2)$ in the denominator.
  \item $\dd=1,c_k=-1$ i.e. $k = J/4$, 
    \begin{align*}
      \lim_{\substack{\dd \rightarrow 1 \\
      k \rightarrow J/4}} \frac{\partial \lambda_\pm}{\partial k} =&
                                  \lim_{\substack{\dd \rightarrow 1 \\
      k \rightarrow J/4}} \frac{\partial \lambda_\pm}{\partial c_k}
      \frac{\partial c_k}{\partial k} =\pm \frac{2 \alpha}{(3 -
      c_k)^{\frac32} \sqrt{1 + c_k}} \left(- \frac{4 \pi s_k}{J} \right) \\
      =& \begin{cases}
        \mp \frac{\sqrt{2} \alpha \pi}{J}, & k \rightarrow (J/4)^+\\
        \pm \frac{\sqrt{2} \alpha \pi}{J}, & k \rightarrow (J/4)^-
      \end{cases},
    \end{align*}
    where $s_k=\sin\left(\frac{4 \pi k}{J}\right)$, hence there is a
    minimum for $\lambda_+$ and a maximum for $\lambda_-$;
  \item $\dd=\frac{2+\sqrt{2}}{2},c_k=1$, where 
    \begin{multline*}
      \lim_{\substack{\dd \rightarrow 1 \\ 
          k \rightarrow J/2}} \frac{\partial \lambda_\pm}{\partial k}
      = \lim_{\substack{\dd \rightarrow 1 \\
          k \rightarrow J/2}} \frac{\partial \lambda_\pm}{\partial
        c_k} \frac{\partial c_k}{\partial k} \\
      = \scalemath{0.9}{\lim_{k \rightarrow J/2} \left(\frac{2
            \alpha}{c_k - 3 - 2 \sqrt{2}} \left( 1 - \sqrt{2} \pm \frac{c_k -
              5}{\sqrt{(c_k-1)\left(\left(2\sqrt{2}-1\right)c_k - 7 -
                  2\sqrt{2}\right)}} \right) \right) \left(- \frac{4 \pi s_k}{J}
        \right)} \\
      = \begin{cases}
        \pm \frac{4 \alpha \pi (2 - \sqrt{2})}{(2 + \sqrt{2}) J}, & k
        \rightarrow (J/2)^+\\
        \mp \frac{4 \alpha \pi (2 - \sqrt{2})}{(2 + \sqrt{2}) J}, & k
        \rightarrow (J/2)^-
      \end{cases},
    \end{multline*}
    therefore it is a minimum for $\lambda_+$ and a maximum for
    $\lambda_-$.
  \end{enumerate}
  
  Thus, in the following we will assume that $\dd \ne 1$ and $\dd \ne
  \frac{2+\sqrt{2}}{2}$.
  
  In order to determine if the extremum at $k=J/4$ is a minimum or a
  maximum we compute the second derivative,
  \begin{equation*}
    \left.\frac{\partial^2 \lambda_+}{\partial k^2}\right|_{k=J/4} < 0
    \Longleftrightarrow \frac{4 \pi^2 \alpha \left(1 - 4 \dd + 8 \dd^2 - 4
        \dd^3\right)}{\dd^3 J^2 (\dd-1) \left(2\dd-1\right)} < 0
    \Longleftrightarrow 1 - 4 \dd + 8 \dd^2 - 4 \dd^3 < 0.
  \end{equation*}
  The only real root of this polynomial is $\widetilde{\dd}_+$, and we
  conclude that at $k=J/4$, for $\dd<\widetilde{\dd}_+$, $\lambda_+$ has
  a minimum, and conversely, for $\dd>\widetilde{\dd}_+$ it has a maximum.
  For the second eigenvalue, we get
  \begin{equation*}
    \left.\frac{\partial^2 \lambda_-}{\partial k^2}\right|_{k=J/4} < 0
    \Longleftrightarrow \frac{4 \pi^2 \alpha \left(2\dd-3\right)}{\dd J^2
      (\dd-1) \left(2\dd-1\right)} < 0 \Longleftrightarrow 2 \dd - 3 < 0,
  \end{equation*}
  and we conclude that at $k=J/4$, for $\dd<\widetilde{\dd}_-$,
  $\lambda_-$ has a maximum, and conversely, for $\dd>\widetilde{\dd}_-$
  it has a minimum.
  
  Similarly, at $k=J/2$, we find
  \begin{gather*}
    \left.\frac{\partial^2 \lambda_+}{\partial k^2}\right|_{k=J/2} < 0 \\
    \Longleftrightarrow\frac{8 \pi ^2 \alpha \left(\frac{(2 \dd-1) (d
          (2 \dd (3 \dd-7)+9)-2)}{\left| 1-2 (\dd-1) \dd (2 \dd-3)\right|
        }+\dd-1\right)}{\dd (J-2 d J)^2} < 0\\
    \Longleftrightarrow2-13 \dd+32 \dd^2 -34 \dd^3+12 \dd^4+ (\dd-1)
    \left| -4 \dd^3+10 \dd^2-6 \dd+1\right| < 0 \\
    \Longleftrightarrow\begin{cases}
      -1 + 4 \dd - 8 \dd^2 + 4 \dd^3 < 0  & \text{ if } \dd < \frac{2+\sqrt{2}}2, \\
      -2 + 9 \dd - 14 \dd^2 + 6 \dd^3 < 0 & \text{ if } \dd = \frac{2+\sqrt{2}}2, \\
      2 \dd - 3 < 0                       & \text{ if } \dd > \frac{2+\sqrt{2}}2,
    \end{cases}\\
    \Longleftrightarrow-1 + 4 \dd - 8 \dd^2 + 4 \dd^3 < 0,
  \end{gather*}
  and we conclude that at $k=J/2$, for $\dd<\widetilde{\dd}_+$,
  $\lambda_+$ has a maximum, and conversely, for $\dd>\widetilde{\dd}_+$
  it has a minimum. And finally,
  \begin{gather*}
    \left.\frac{\partial^2 \lambda_-}{\partial k^2}\right|_{k=J/2} < 0 \\
    \Longleftrightarrow-2+13 \dd-32 \dd^2+34 \dd^3-12 \dd^4+ (\dd-1)
    \left| -4 \dd^3+10 \dd^2-6 \dd+1\right| < 0 \\
    \Longleftrightarrow\begin{cases}
      3 - 2 \dd < 0                      & \text{ if } \dd < \frac{2+\sqrt{2}}2, \\
      2 - 9 \dd + 14 \dd^2 - 6 \dd^3 < 0 & \text{ if } \dd = \frac{2+\sqrt{2}}2, \\
      1 - 4 \dd + 8 \dd^2 - 4 \dd^3 < 0  & \text{ if } \dd > \frac{2+\sqrt{2}}2, 
    \end{cases}\\
    \Longleftrightarrow3 - 2 \dd < 0,
  \end{gather*}
  and we conclude that at $k=J/2$, for $\dd>\widetilde{\dd}_-$,
  $\lambda_-$ has a maximum, and conversely, for $\dd<\widetilde{\dd}_-$
  it has a minimum.
  
  In order to minimize the spectral radius we have to center again the
  eigenvalue distribution around zero, using the explicit formulas
  developed above. The result thus follows from the solution of
  \begin{align*}
    \begin{cases}
      \lambda_+\big|_{k=J/2} = -\lambda_-\big|_{k=J/2}, \text{ for } 1
      \le \dd \le \widetilde{\dd}_+, \\
      \lambda_+\big|_{k=J/4} = -\lambda_-\big|_{k=J/2}, \text{ for }
      \widetilde{\dd}_+ \le \dd \le \widetilde{\dd}_-, \\
      \lambda_+\big|_{k=J/4} = -\lambda_-\big|_{k=J/4}, \text{ for }
      \widetilde{\dd}_- \le \dd.
    \end{cases}
  \end{align*}
\end{proof}

\begin{figure}
  \includegraphics[width=0.5\textwidth]{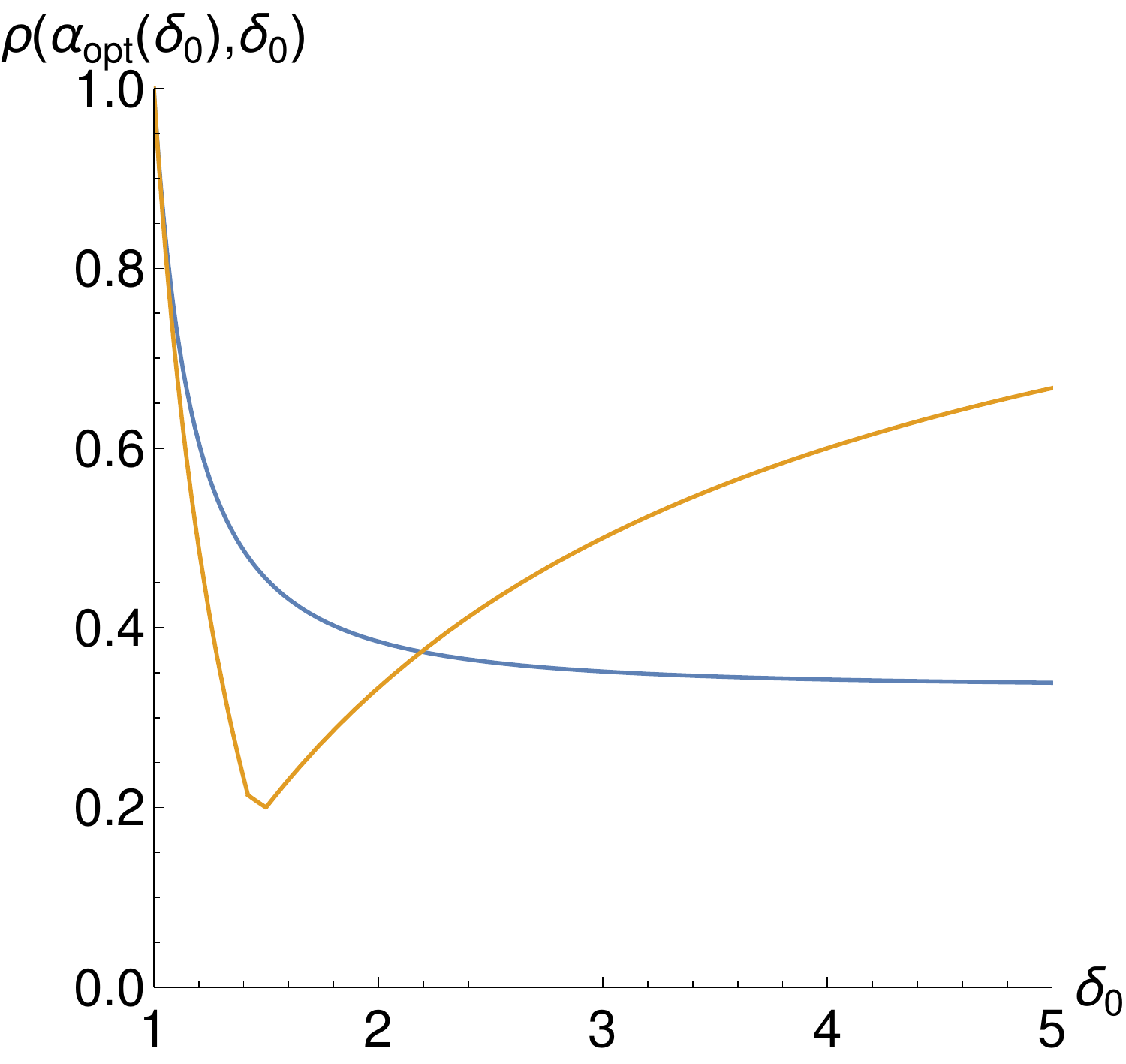}
  \caption{Spectral radius
    $\rho\left(\alpha_\text{opt}(\dd), \dd\right)$ of the iteration
    operator of Algorithm \ref{alg:precit} using an optimal relaxation
    parameter, for a \emph{point} block-Jacobi smoother (blue) and a
    \emph{cell} block-Jacobi smoother (orange) as function of the
    penalization parameter $\dd$.}
\label{fig:PoissonTheo}
\end{figure}
Figure \ref{fig:PoissonTheo} shows the contraction factor as function
of the penalization parameter $\dd$ for the \emph{point} block-Jacobi
and \emph{cell} block-Jacobi two-level methods using the best
relaxation parameter $\alpha_{\text{opt}}$ from Theorem
\ref{thm:PoissonPoint} and \ref{thm:PoissonCell}. We see that the
\emph{cell} block-Jacobi smoother outperforms the \emph{point}
block-Jacobi smoother for values of $\dd\le \delta_c = 1 + \frac{1}{6}
\sqrt[3]{54 - 6 \sqrt{33}} + \sqrt[3]{\frac14 + \frac{\sqrt{33}}{36}}
\approx 2.19149$. For larger penalization parameters $\dd$ the
\emph{point} block-Jacobi two-level method converges faster. This can
be understood intuitively as follows: the more we penalize the jumps,
the more important the face terms in the bilinear form become and,
after a threshold, a preconditioner that takes into account all the
terms containing this penalization begins performing better than a
preconditioner which does not.

It should be noted that even though large values of $\dd$ are a better
choice when using the \emph{point} block-Jacobi smoother, this also
means that the discretization of the coarse space will be harder to
invert, since according to equation \eqref{eqn:coarseoperator} the
penalty is doubled.

We can also observe that we obtain the best performance for
$\dd={\dd}_-=\frac32$, shown in Figure \ref{fig:PoissonTheo} as the
minimum of the orange curve. This shows that the penalization
parameter in SIPG has a direct influence on the two-level solver, and
there is an optimal choice $\dd={\dd}_-$ for best
performance. Choosing other values for $\dd$ can make the solver
slower by an order of magnitude, even if the best relaxation parameter
is chosen!

\subsection{Reaction-diffusion equation}

We now use LFA to study the more general reaction-diffusion case. The
computations become substantially more involved, but we will still be
able to \emph{center} the spectrum to derive relaxation parameter
values that lead to very effective two-level methods, even though we
can not formally prove optimality as in the simpler case of the
Poisson equation in the previous subsection. We will however provide
numerical evidence for the optimality in Section \ref{NumSec}.  For
the reaction-diffusion case, we see from the elements in the matrices
shown in \S\ref{sec:lfatools} that the key physical parameter is
\begin{equation}
  \gamma = \frac{\eps}{h^2} = \eps J^2.
\end{equation}
When $\eps$ becomes small, i.e. the reaction dominated case, the mesh
size needs to resolve boundary layers, and we then need
$h\sim\sqrt{\eps}$ \cite[\S1.3.2]{Gie2018} (see also
\cite{Kopteva2010} and references therein), which implies that
$\gamma$ is of order $1$. When $\eps$ is not small however, the mesh
size does not depend on $\eps$, and thus $\gamma$ can become large. We
therefore need a two-level method which is robust for a large range of
physical values $\gamma$.
  
\subsubsection{\emph{Point} block-Jacobi smoother}

By direct calculation, the eigenvalues of the iteration operator of
Algorithm \ref{alg:precit} for the reaction-diffusion equation case
using a \emph{point} block-Jacobi smoother are of the form
\begin{gather}\label{eqn:rdpoint}
  \lambda_\pm = \frac{c_1+c_2 x+c_3 x^2 \pm \sqrt{c_4 + c_5 x + c_6
      x^2 + c_7 x^3 + c_8 x^4 + c_9 x^5}} {c_{10} + c_{11} x + c_{12} x^2},
\end{gather}
where $x=\cos\left(\frac{4 \pi k}J \right)$, and the
$c_1,\dots,c_{12}$, depending on $\dd$, are defined in Appendix
\ref{apx:pointc}.  Figure
\ref{fig:pointepst}
\begin{figure}
  \centering
  \begin{subfigure}{0.49\textwidth}
    \includegraphics[width=\textwidth]{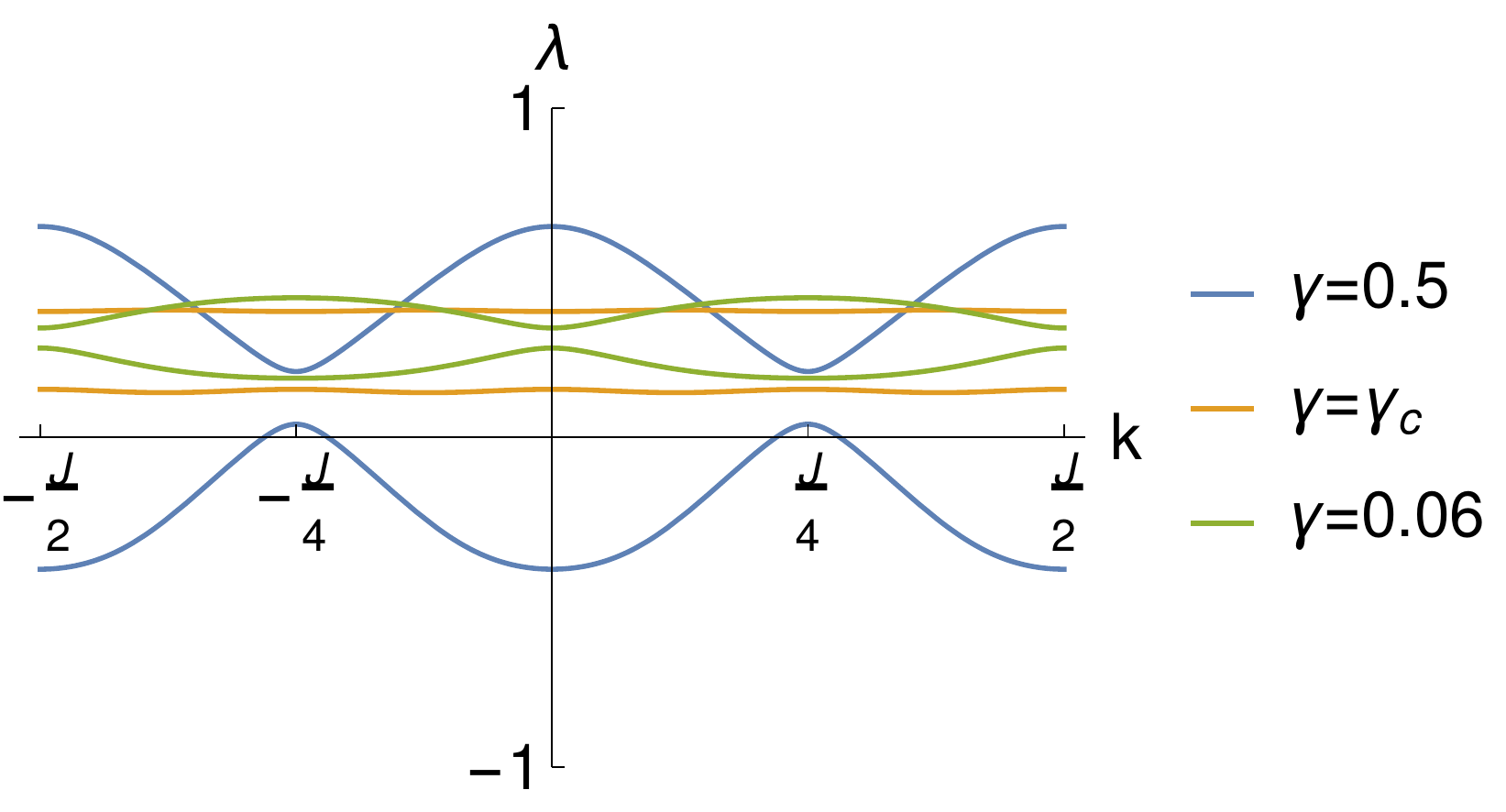}
    \caption{$\dd=1,\alpha=1$.}
    \label{fig:pointepst}
  \end{subfigure}\\
  \begin{subfigure}{0.49\textwidth}
    \includegraphics[width=\textwidth]{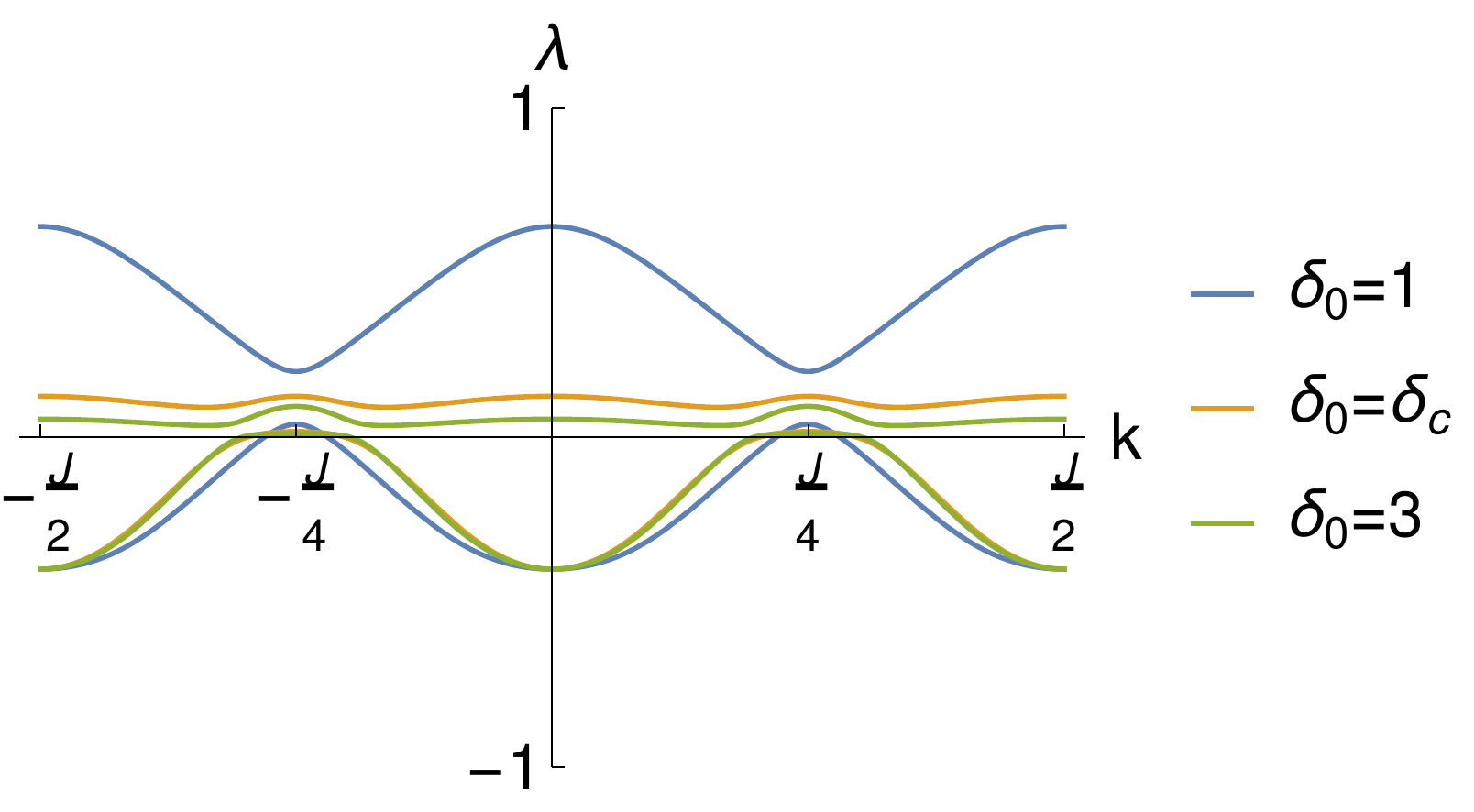}
    \caption{$\gamma=0.5>\gamma_c,\alpha=1$.}
    \label{fig:pointepst2}
  \end{subfigure}
  \begin{subfigure}{0.49\textwidth}
    \includegraphics[width=\textwidth]{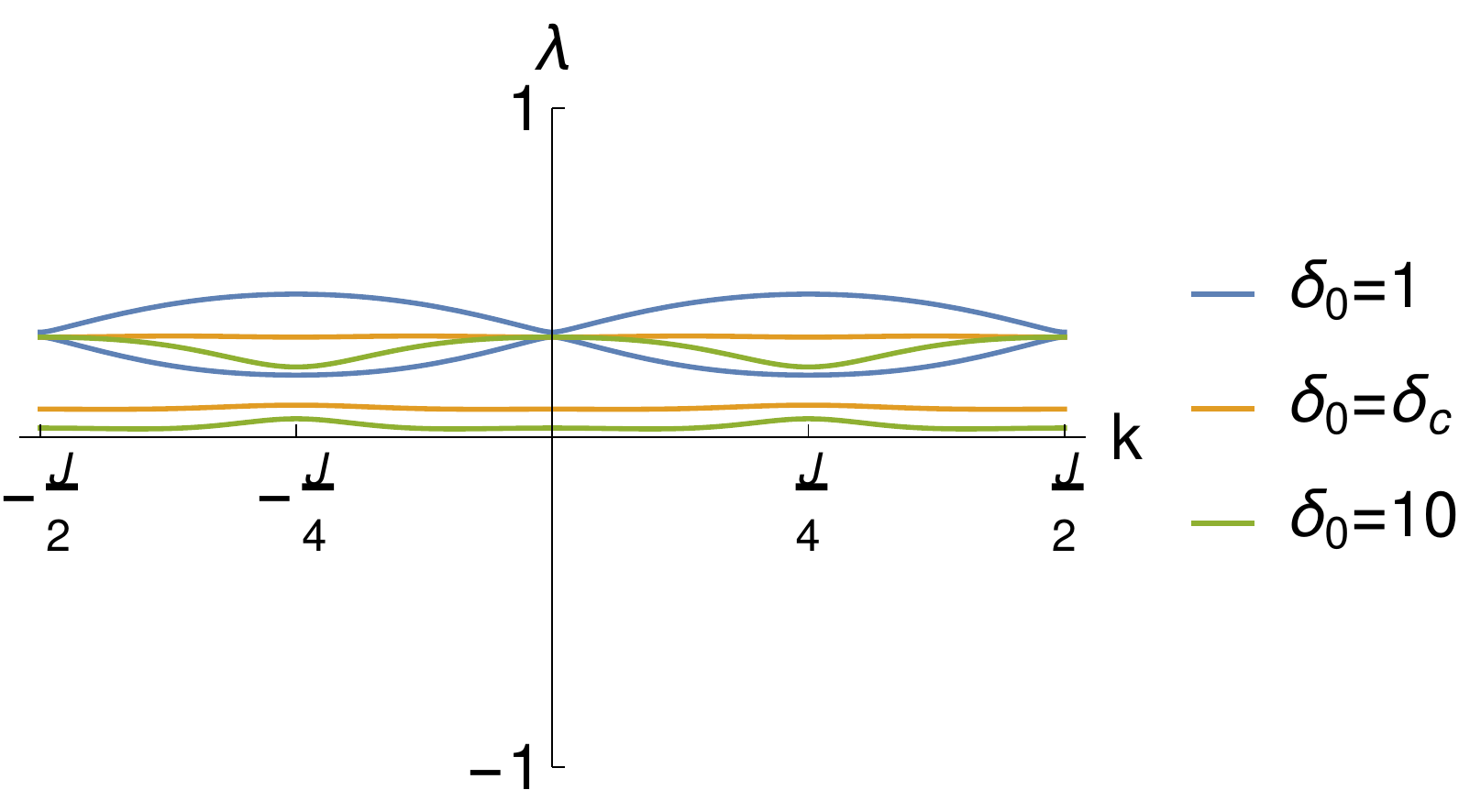}
    \caption{$\gamma=0.05<\gamma_c,\alpha=1$.}
    \label{fig:pointepst3}
  \end{subfigure}
  \caption{Spectrum of the iteration operator of algorithm
    \eqref{alg:precit} using a \emph{point} block-Jacobi smoother for a
    varying stabilization parameter $\dd$ of the SIPG method and reaction
    scaling $\gamma$.}
\end{figure}
shows the spectrum for penalization parameter $\dd=1$. We see that
there is a threshold on the physical parameter $\gamma$ where the
frequency $k$, at which the maximum absolute value of the eigenvalues
determining the spectral radius occurs, changes from $J/2$ to
$J/4$. The critical $\gamma$ can be computed by solving
$\lambda_+(\gamma)\big|_{k=J/2} = \lambda_+(\gamma)\big|_{k=J/4}$, and
it is given by
\begin{equation}
  \gamma_c(\dd) = \frac{1}{3 \left(\sqrt{4 (\dd-1) \dd+5}+(3-2
      \dd)\right)}.
\end{equation}
Similarly, Figures \ref{fig:pointepst2} and \ref{fig:pointepst3} show
the spectrum for $\gamma=0.5$ and $\gamma=0.05$. We see that there is
a threshold on $\dd$ where the frequency $k$, at which the maximum
absolute value of $\lambda_+$ occurs, changes from $J/2$ to $J/4$. The
critical $\dd$ can be computed as well by solving
$\lambda_+(\dd)\big|_{k=J/2} = \lambda_+(\dd)\big|_{k=J/4}$, and it is
given by
\begin{equation}
  \delta_c^+ = \scalemath{0.85}{\frac{-5 + 9 \gamma \left(6 \gamma^2+8
        \gamma+1\right) +\sqrt{\left(3 \gamma+1\right) \left(3 \gamma \left(12
            \gamma \left(3 \gamma \left(3 \gamma \left(3
                  \gamma+7\right)+20\right)+25\right)+53\right)+10\right)}} {6 \gamma
      (12 \gamma+5)}}
\end{equation}
for $\gamma > \gamma_c$, and
\begin{equation}
  \delta_c^- = \frac{1 + 2 \gamma \left(6 \gamma-11\right)-\sqrt{4
      \gamma \left(2 \gamma + 1\right) \left(3 \gamma \left(6 \gamma +
          7\right)+1\right)+1}}{8 \gamma \left(6 \gamma - 1\right)}
\end{equation}
for $\gamma \le \gamma_c$. This allows us to obtain
$\alpha_\text{opt}$ for different regimes: the equations to be solved
to minimize the spectral radius are
\begin{align}
  \begin{cases}
    \lambda_+\big|_{k=\frac{J}4} + \lambda_-\big|_{k=\frac{J}4} = 0 &
    \text{ for } \gamma \le \gamma_c,\dd \le \delta_c, \\
    \lambda_+\big|_{k=\frac{J}2} + \lambda_-\big|_{k=\frac{J}2} = 0 &
    \text{ for } \gamma \le \gamma_c,\dd > \delta_c \text{ or } \gamma >
    \gamma_c,\dd \le \delta_c, \\
    \lambda_+\big|_{k=\frac{J}4} + \lambda_-\big|_{k=\frac{J}2} = 0 &
    \text{ for } \gamma > \gamma_c,\dd > \delta_c,
  \end{cases}
\end{align}
which leads to the corresponding optimal relaxation parameters
\begin{align}
  \alpha_\text{opt}=
  \begin{cases} 
    \frac{8 \left(3 \gamma+1\right) \left(2 \dd \gamma+1\right)
      \left(3 (2 \dd-1) \gamma + 1\right)}{\left(12 \dd \gamma+5\right)
      \left(12 (2 \dd-1) \gamma^2+8 \dd \gamma + 1\right)}, & \text{ for }
    \gamma \le \gamma_c,\dd \le \delta_c^-, \\
    \frac{8 \left(3 \gamma + 1\right) \left(3 (2 \dd-1)
        \gamma+1\right)^2}{\left(6 \gamma + 1\right) \left(9 \gamma \left(4 (6
          (\dd-1) \dd+1) \gamma+8 \dd-5\right)+5\right)},&
    \begin{aligned} 
      &\text{ for } \gamma \le \gamma_c,\dd > \delta_c^- \\ 
      &\text{ or } \gamma > \gamma_c,\dd \le \delta_c^+,
    \end{aligned}\\
    \frac{4 \left(3 \gamma+1\right) \left(2 \dd \gamma+1\right)
      \left(3 (2 \dd-1) \gamma+1\right)}{\gamma \left(108 \dd (2 \dd-1)
        \gamma^2+6 (\dd (6 \dd+19)-8) \gamma+19 \dd+9\right)+2}, & \text{ for
    } \gamma > \gamma_c,\dd > \delta_c^+.
  \end{cases}
\end{align}
Figure \ref{fig:rdaoptpoint}
\begin{figure}
\includegraphics[width=0.49\textwidth]{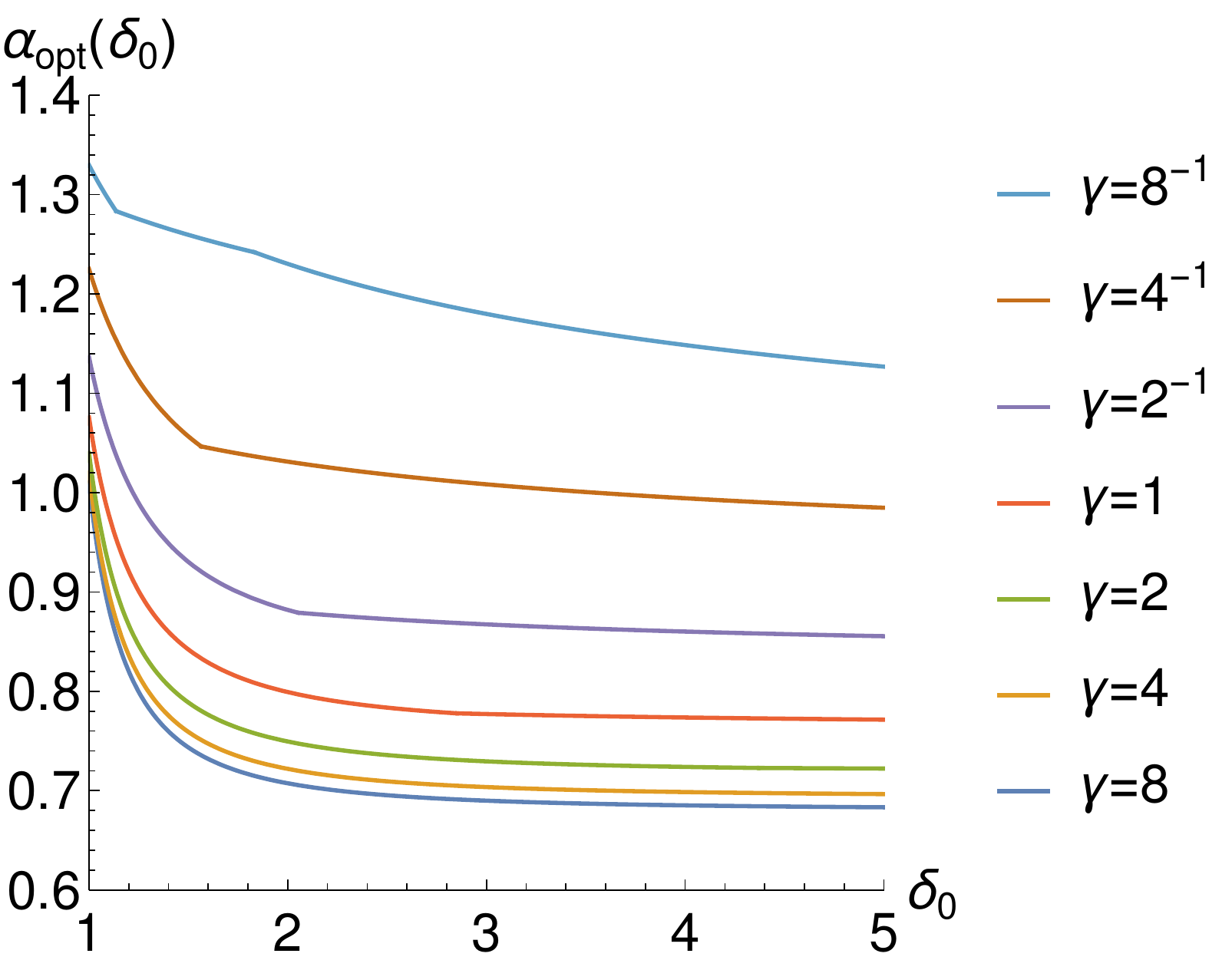}
\includegraphics[width=0.49\textwidth]{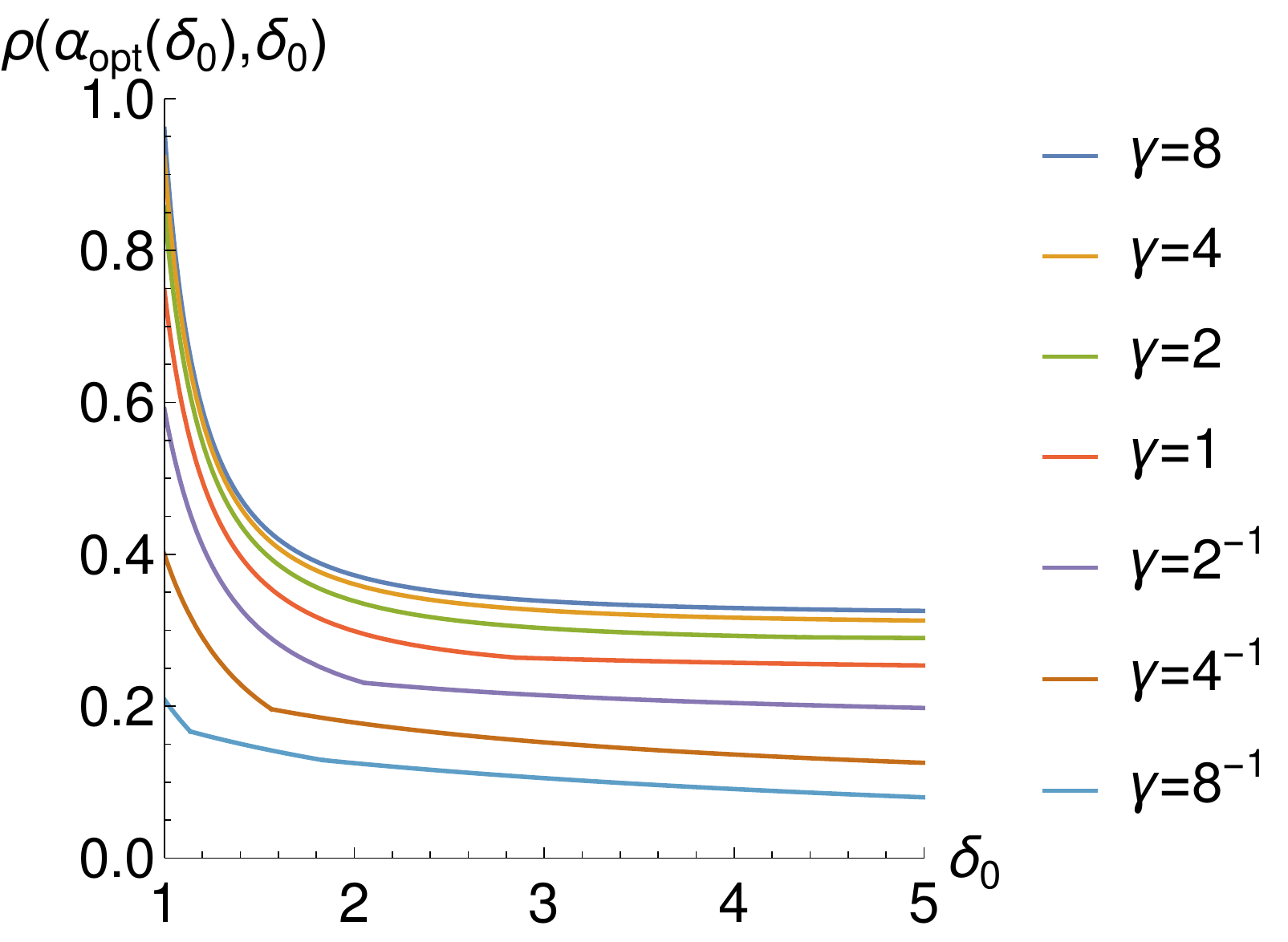}
\caption{Optimized relaxation parameter $\alpha_\text{opt}(\dd)$ and
  corresponding convergence factor of Algorithm \ref{alg:precit} using a
  \emph{point} block-Jacobi smoother as function of the stabilization
  parameter $\dd$ of the SIPG method for different reaction scalings
  $\gamma=\frac{\eps}{h^2}$.}
\label{fig:rdaoptpoint}
\end{figure}
shows the behavior of $\alpha_\text{opt}$ and the corresponding
convergence factor of the two-level method as a function of $\dd$ for
several values of the reaction scaling $\gamma =
\frac{\eps}{h^2}$. Note that $\lim_{\gamma \rightarrow \infty}
\delta_c^+ \rightarrow \infty$ and $\lim_{\gamma \rightarrow \infty}
\alpha_\text{opt} \rightarrow \frac{(2 \dd-1)^2}{6 \dd^2-6 \dd+1}$
(from the second expression), consistent with Theorem
\ref{thm:PoissonPoint}. We see from the right plot in Figure
\ref{fig:rdaoptpoint} that the \emph{point} block-Jacobi two-level
method is convergent for all $\dd > 1$ with the optimal choice
$\alpha_\text{opt}$, and the convergence factor remains below about
$0.4$ for penalization $\dd$ above 2, even when the reaction scaling
$\gamma$ becomes large, so the method is robust for large $\gamma$. We
also see from the left plot in Figure \ref{fig:rdaoptpoint} that
overrelaxation is needed (i.e. $\alpha_\text{opt}>1$), for typical
values of $\dd$ around 2, when $\gamma$ becomes small, but for
$\gamma$ large we need underrelaxation (i.e. $\alpha_\text{opt}<1$).

\subsubsection{\emph{Cell} block-Jacobi smoother}
By direct calculation, the eigenvalues of the iteration operator of
Algorithm \ref{alg:precit} for the reaction-diffusion equation case
using a \emph{cell} block-Jacobi smoother are of the form
\begin{equation}\label{eqn:rdcell}
  \lambda_\pm = \frac{c_1+c_2 x+c_3 x^2 \pm \sqrt{c_4 + c_5 x + c_6
      x^2 + c_7 x^3 + c_8 x^4}} {c_9 + c_{10} x + c_{11} x^2},
\end{equation}
where $x=\cos\left(\frac{4 \pi k}J \right)$, and the $c_1, \dots,
c_{11}$, depending on $\dd$, are defined in Appendix
\ref{apx:cellc}. Figures \ref{fig:cellepst}, \ref{fig:cellepst2},
\ref{fig:cellepst3} and \ref{fig:cellepst4}
\begin{figure}
  \centering
  \begin{subfigure}{0.49\textwidth}
    \includegraphics[width=\textwidth]{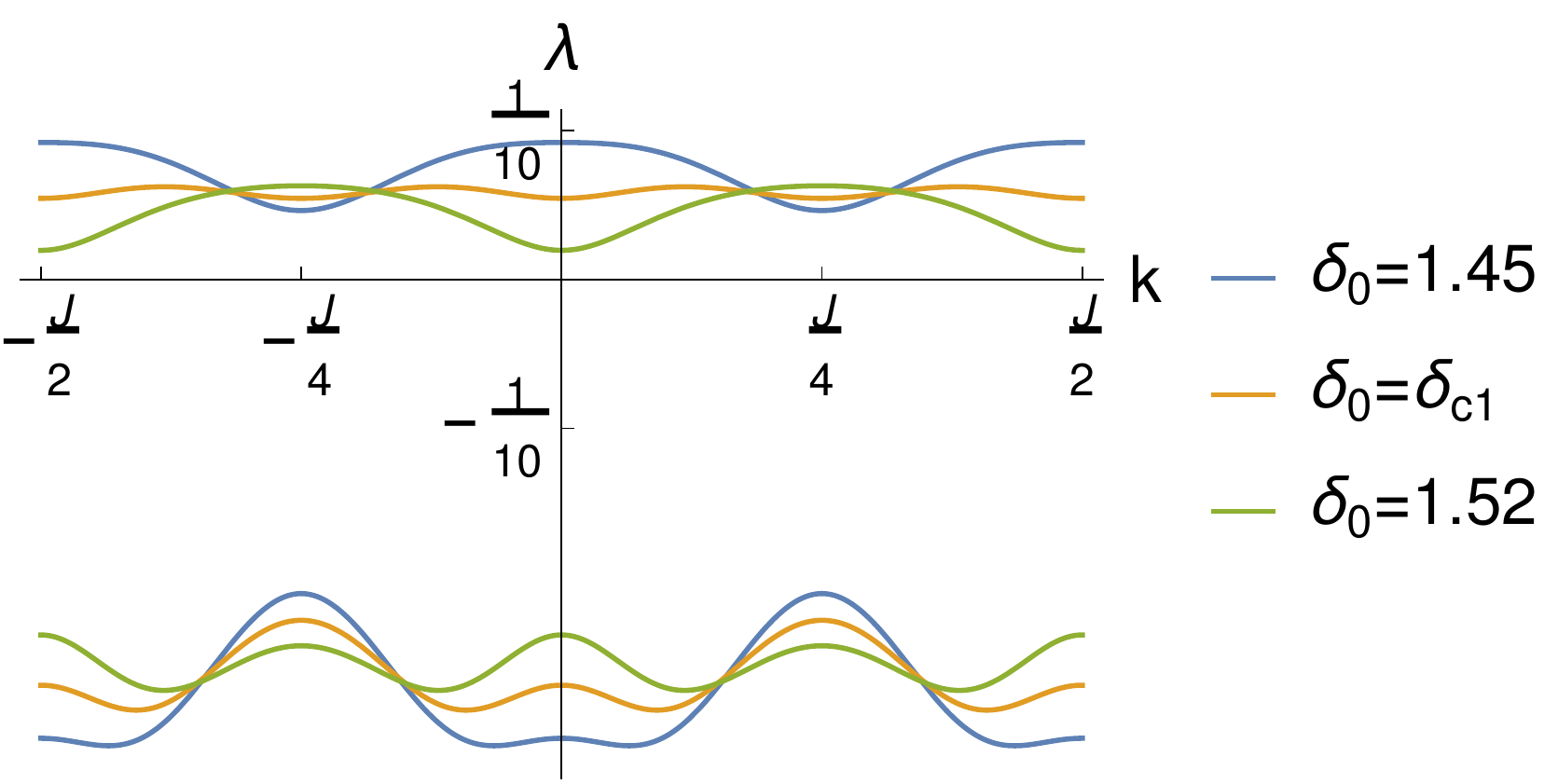}
    \caption{$\gamma=0.5,\alpha=1$.}
    \label{fig:cellepst}
  \end{subfigure}
  \begin{subfigure}{0.49\textwidth}
    \includegraphics[width=\textwidth]{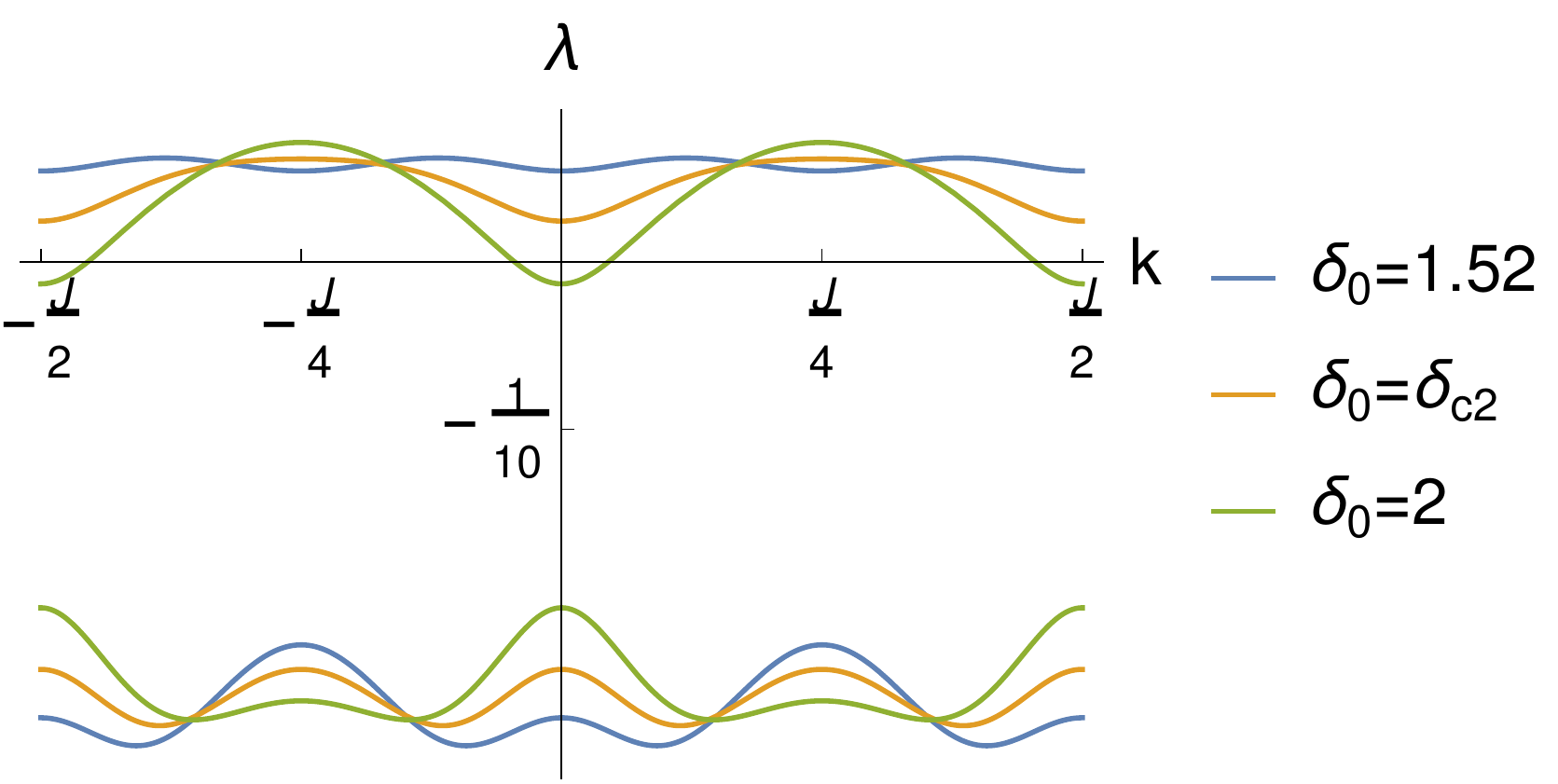}
    \caption{$\gamma=0.5,\alpha=1$.}
    \label{fig:cellepst2}
  \end{subfigure}
  \\
  \begin{subfigure}{0.49\textwidth}
    \includegraphics[width=\textwidth]{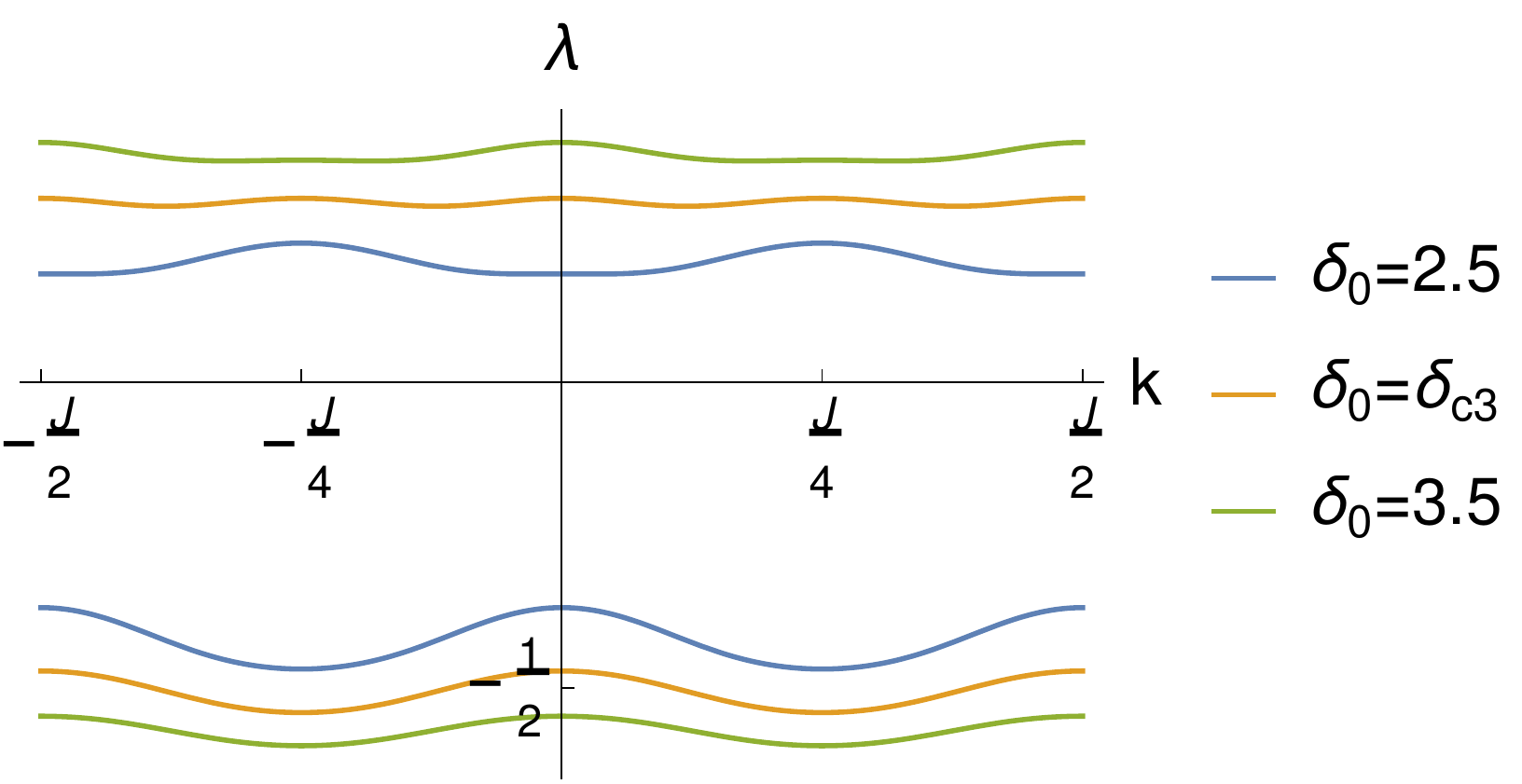}
    \caption{$\gamma=0.5,\alpha=1$.}
    \label{fig:cellepst3}
  \end{subfigure}
  \begin{subfigure}{0.49\textwidth}
    \includegraphics[width=\textwidth]{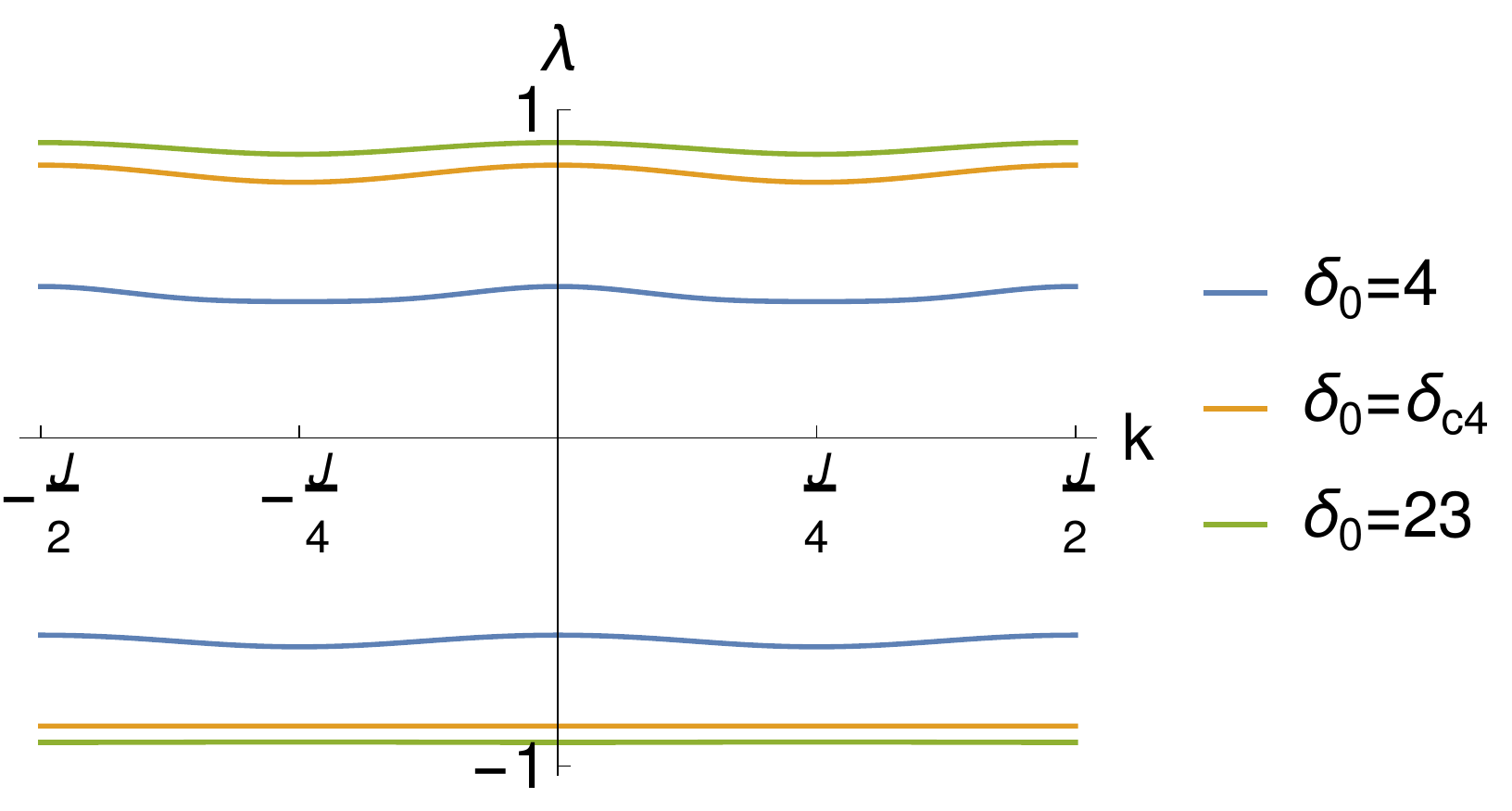}
    \caption{$\gamma=0.5,\alpha=1$.}
    \label{fig:cellepst4}
  \end{subfigure}
  \caption{Spectrum of the iteration operator of algorithm
    \eqref{alg:precit} using a \emph{cell} block-Jacobi smoother for a
    varying stabilization parameter $\dd$ of the SIPG method and reaction
    scaling $\gamma \ge \gamma_c$.}
\end{figure}
show the spectrum of the iteration operator of Algorithm
\ref{alg:precit} for $\gamma=\frac12$. We can see that, in contrast to
the case of the Poisson equation, the maxima and minima are not
located only at $0,J/4,J/2$, however we approximate the behavior
optimizing by considering only the values at $0,J/4,J/2$. Therefore,
in order to equioscillate the spectrum we see that the following
equations need to hold:
\begin{align}\label{eqn:lambdabigeps}
  \begin{cases}
    \lambda_+\big|_{k=\frac{J}2} + \lambda_-\big|_{k=\frac{J}2} = 0, & 
    \text{ for } \dd \le \delta_{c1} 
    \text{ or } \dd \ge \delta_{c4},
    \\
    \lambda_+\big|_{k=\frac{J}4} + \lambda_-\big|_{k=\frac{J}2} = 0, &
    \text{ for } \dd \le \delta_{c2},\\
    \lambda_+\big|_{k=\frac{J}4} + \lambda_-\big|_{k=\frac{J}4} = 0, &
    \text{ for } \dd \le \delta_{c3},\\
    \lambda_+\big|_{k=\frac{J}2} + \lambda_-\big|_{k=\frac{J}4} = 0, &
    \text{ for } \dd \le \delta_{c4},
  \end{cases}
\end{align}
where
\begin{align*}
  \delta_{c1} =&
                 \begin{aligned}
                   &-\frac{1}{36 \gamma^2}\left(4 \gamma \left(1-6
                       \gamma\right)+\xi(\gamma)+\frac{\gamma^2 \left(12 \gamma \left(12
                           \gamma+5\right)+1\right)}{\xi(\gamma)}\right)
                 \end{aligned}, \\
  \delta_{c2} =& \frac{-3 + 36 \gamma^2+2 \gamma+\sqrt{4 \gamma
                 \left(3 \gamma \left(4 \gamma \left(27 \gamma + 35\right) + 65\right)
                 + 37\right)+9}}{16 \gamma \left(3 \gamma + 1\right)}, \\
  \delta_{c3} =& 2 \gamma +2, \\
  \delta_{c4} =& 3 \left(6 \gamma^2+4 \gamma +1\right).
\end{align*}
with $\xi(\gamma)=\scalemath{0.65}{\gamma \sqrt[3]{3 \sqrt{ 3 \left(12
        \gamma \left(27 \gamma \left(8 \gamma \left(\gamma \left(6 \gamma
                \left(33
                  \gamma+46\right)+155\right)+44\right)+51\right)+89\right)+25\right)}-2
    \left(3 \gamma+1\right) \left(12 \gamma \left(57
        \gamma+20\right)+13\right)}}.$

We observe that at $\gamma = \gamma_c = 0.16607\dots$ we have
$\delta_{c1}(\gamma) = \delta_{c2}(\gamma)$. For $\gamma \le
\gamma_c$, we have $\delta_{c2} \le \delta_{c1} \le \delta_{c3} \le
\delta_{c4}$, which means that the distribution of critical values of
$\dd$ changes and we have to perform again the same equioscillation
analysis as we did previously.

Figures \ref{fig:cellepstb}, \ref{fig:cellepstb2},
\ref{fig:cellepstb3} and \ref{fig:cellepstb4}
\begin{figure}
  \centering
  \begin{subfigure}{0.49\textwidth}
    \includegraphics[width=\textwidth]{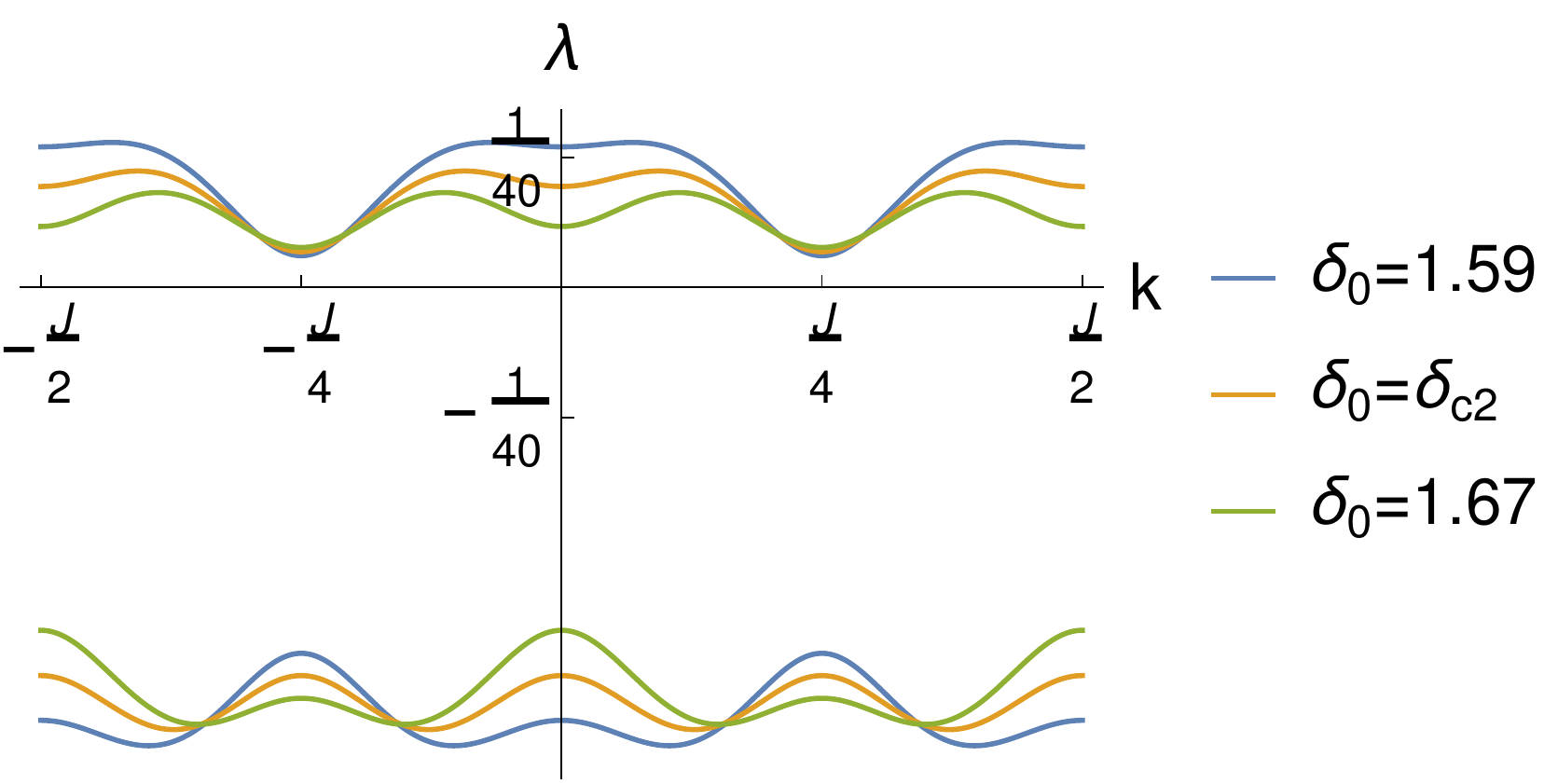}
    \caption{$\gamma=0.05,\alpha=1$.}
    \label{fig:cellepstb}
  \end{subfigure}
  \begin{subfigure}{0.49\textwidth}
    \includegraphics[width=\textwidth]{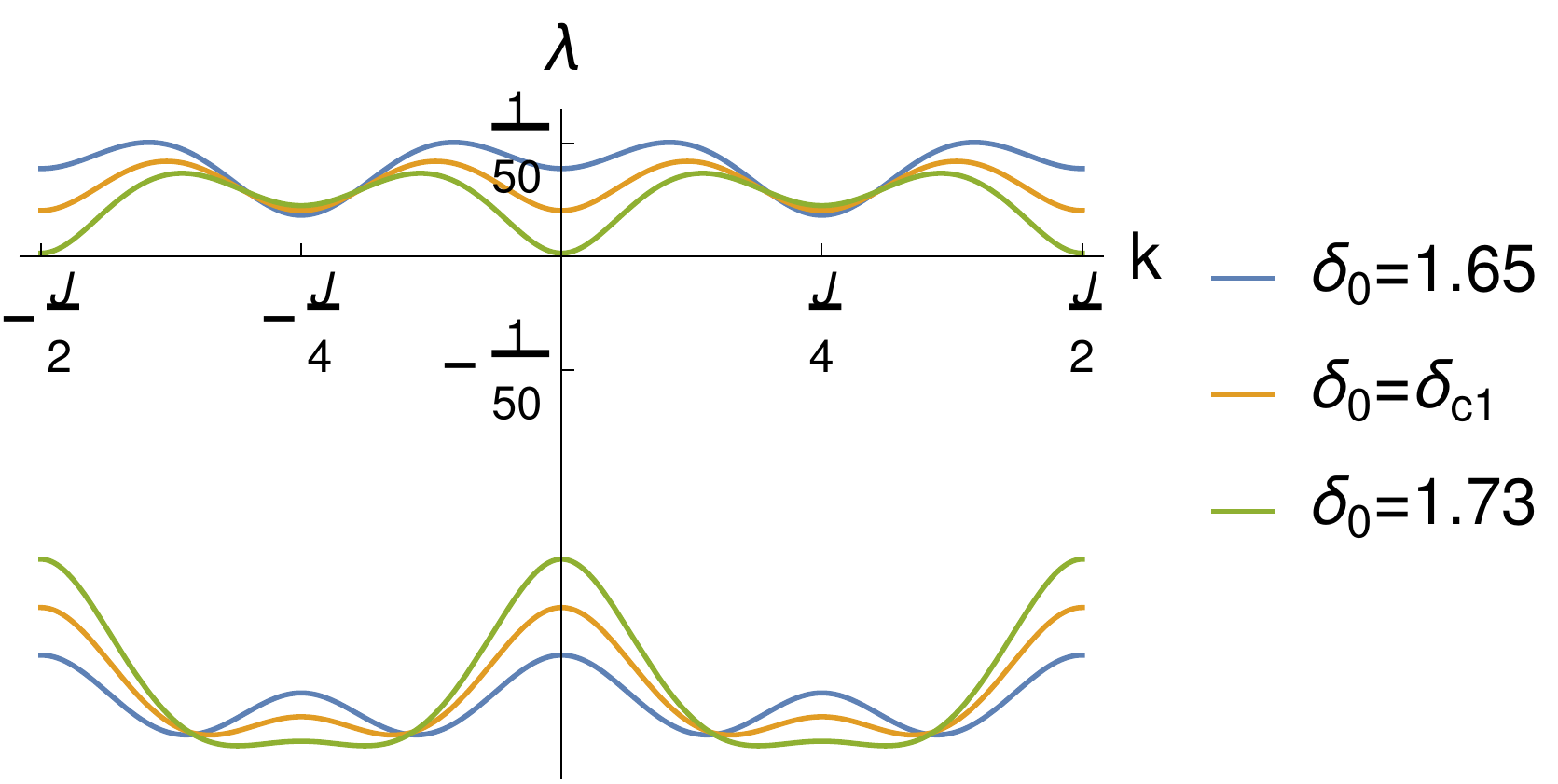}
    \caption{$\gamma=0.05,\alpha=1$.}
    \label{fig:cellepstb2}
  \end{subfigure}
  \\
  \begin{subfigure}{0.49\textwidth}
    \includegraphics[width=\textwidth]{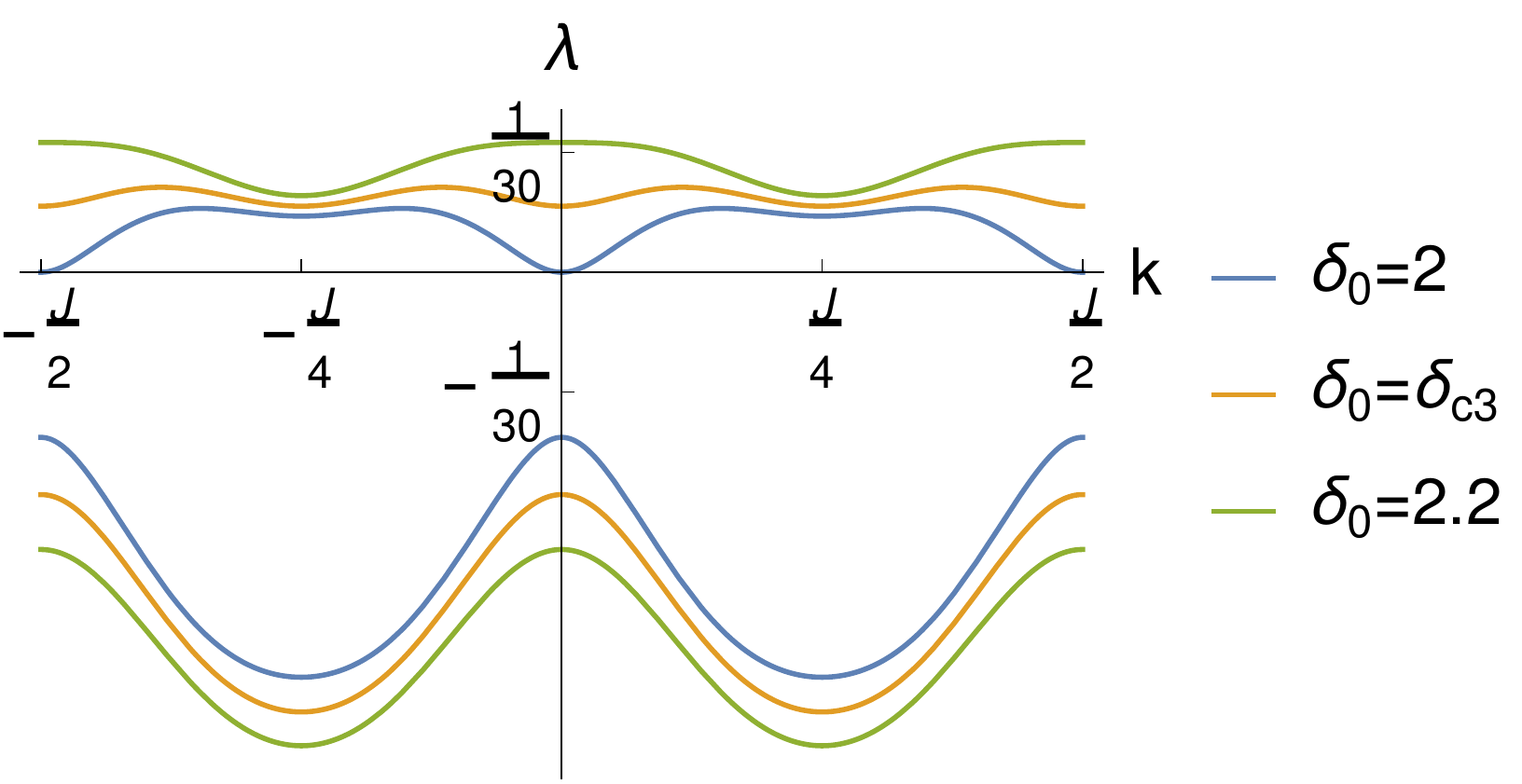}
    \caption{$\gamma=0.05,\alpha=1$.}
    \label{fig:cellepstb3}
  \end{subfigure}
  \begin{subfigure}{0.49\textwidth}
    \includegraphics[width=\textwidth]{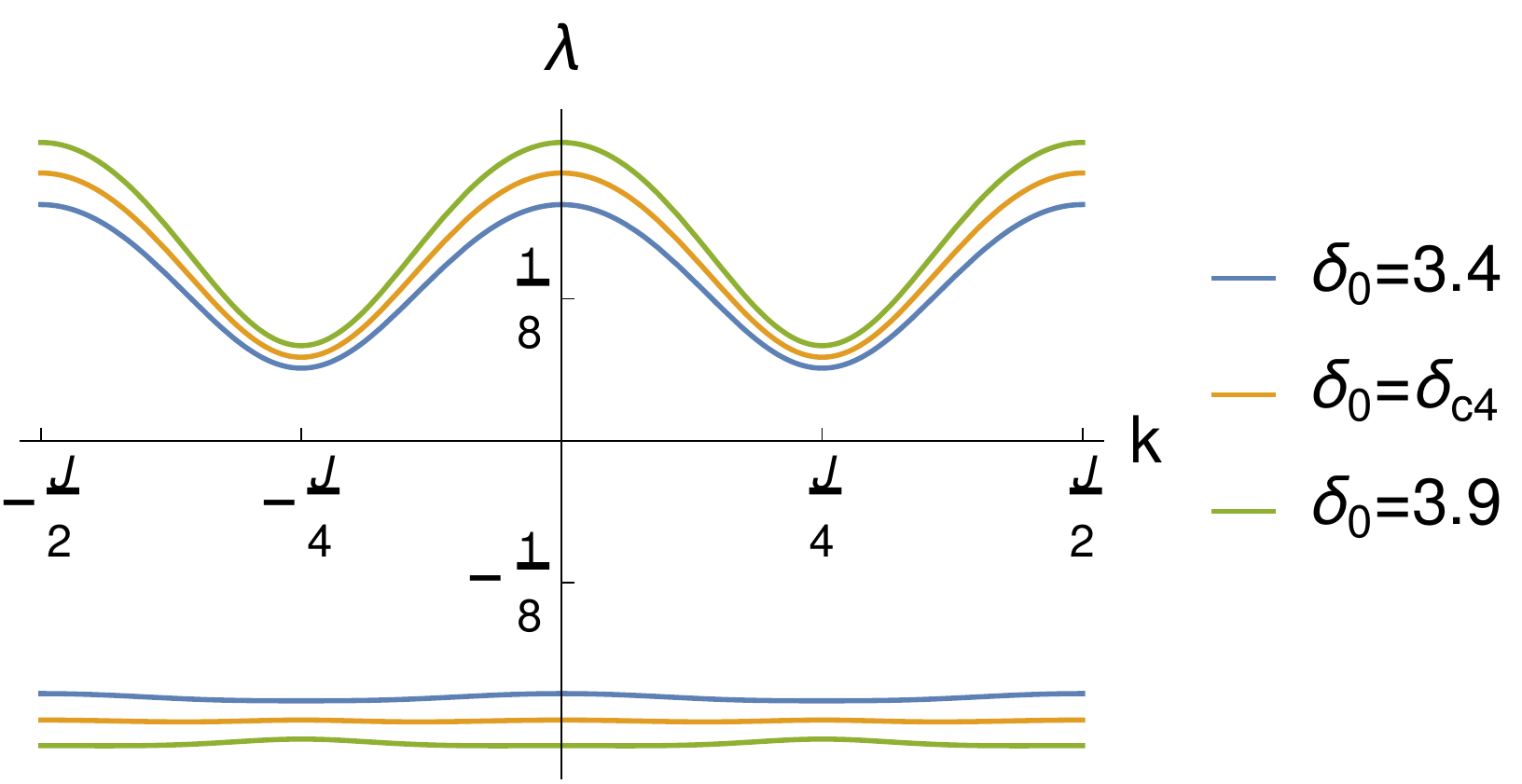}
    \caption{$\gamma=0.05,\alpha=1$.}
    \label{fig:cellepstb4}
  \end{subfigure}
  \caption{Spectrum of the iteration operator of algorithm
    \eqref{alg:precit} using a \emph{cell} block-Jacobi smoother for a
    varying stabilization parameter $\dd$ of the SIPG method and reaction
    scaling $\gamma \le \gamma_c$.}
\end{figure}
show the spectrum of the iteration operator of algorithm
\eqref{alg:precit} for $\gamma=\frac1{20}$. In order to center the
spectrum we see that the following equations need to hold:
\begin{align}\label{eqn:lambdasmalleps}
  \begin{cases}
    \lambda_+\big|_{k=\frac{J}2} + \lambda_-\big|_{k=\frac{J}2} = 0, & 
    \text{ for } \dd \le \delta_{c2} 
    \text{ or } \dd \ge \delta_{c4},\\
    \lambda_+\big|_{k=\frac{J}2} + \lambda_-\big|_{k=\frac{J}4} = 0, &
    \text{ for } \dd \le \delta_{c1},\\
    \lambda_+\big|_{k=\frac{J}4} + \lambda_-\big|_{k=\frac{J}4} = 0, &
    \text{ for } \dd \le \delta_{c3},\\
    \lambda_+\big|_{k=\frac{J}2} + \lambda_-\big|_{k=\frac{J}4} = 0, &
    \text{ for } \dd \le \delta_{c4}.
  \end{cases}
\end{align}
Following equations \eqref{eqn:lambdabigeps} and
\eqref{eqn:lambdasmalleps}, the optimal relaxation parameter is
\begin{align}
  \alpha_\text{opt} = 
  \begin{cases}
    \frac{2 \left(2 \dd \gamma +1\right) \left(6 \dd \gamma+1\right)
      \left(3 (2 \dd-1) \gamma+1\right)}{3 \gamma \left(24 \dd \left(2
          \dd^2-1\right) \gamma^2+2 \left(18 \dd^2+\dd-6\right) \gamma+9
        \dd-1\right)+2},
    \begin{cases}
      \text{ for } \gamma \ge \gamma_c, 1 \le \dd \le \delta_{c1}, \\
      \text{ or } \gamma \ge \gamma_c, \dd \ge \delta_{c4}, \\
      \text{ or } \gamma \le \gamma_c, 1 \le \dd \le \delta_{c2}, \\
      \text{ or } \gamma \le \gamma_c, \dd \ge \delta_{c4},
    \end{cases} \\
    \frac{\left(2 \dd \gamma+1\right) \left(6 \dd
        \gamma+1\right)}{\gamma \left(6 (4 \dd-1) \gamma+5 \dd+6\right)+1},
    \text{ for } \gamma \ge \gamma_c, \delta_{c1} \le \dd \le
    \delta_{c2}.\\
    \frac{\left(3 \gamma+1\right) \left(2 \dd \gamma+1\right) \left(6
        \dd \gamma+1\right) \left(3 (2 \dd-1) \gamma+1\right)}{3 \gamma
      \left(18 \dd (8 (\dd-1) \dd+1) \gamma^3+6 (4 \dd (2 \dd (\dd+1)-3)+1)
        \gamma^2+(\dd (31 \dd-6)-8) \gamma+6 \dd-2\right)+1}, \\
    \text{ for } \gamma \le \gamma_c, \delta_{c2} \le \dd \le
    \delta_{c1},\\
    \frac{2 \left(3 \gamma+1\right) \left(2 \dd \gamma+1\right)
      \left(6 \dd \gamma+1\right)}{\left(3 (\dd+1) \gamma+2\right) \left(12
        (2 \dd-1) \gamma^2+8 \dd \gamma+1\right)},
    \begin{cases}
      \text{ for } \gamma \ge \gamma_c, \delta_{c2} \le \dd \le \delta_{c3}, \\
      \text{ or } \gamma \le \gamma_c, \delta_{c1} \le \dd \le \delta_{c3},
    \end{cases} \\
    \frac{2 \left(3 \gamma+1\right) \left(2 \dd \gamma+1\right)
      \left(6 \dd \gamma + 1 \right)}{\gamma \left(36 \dd (2 \dd+1)
        \gamma^2+6 (\dd (4 \dd+9)+4) \gamma+13 \dd+15\right)+2},
    \scalemath{0.9}{\begin{cases}
        \text{ for } \gamma \ge \gamma_c, \delta_{c3} \le \dd \le \delta_{c4}, \\
        \text{ or } \gamma \le \gamma_c, \delta_{c3} \le \dd \le \delta_{c4}.
      \end{cases}}
  \end{cases}.
\end{align}
Figure \ref{fig:rdaoptcell}
\begin{figure}
  \includegraphics[width=0.49\textwidth]{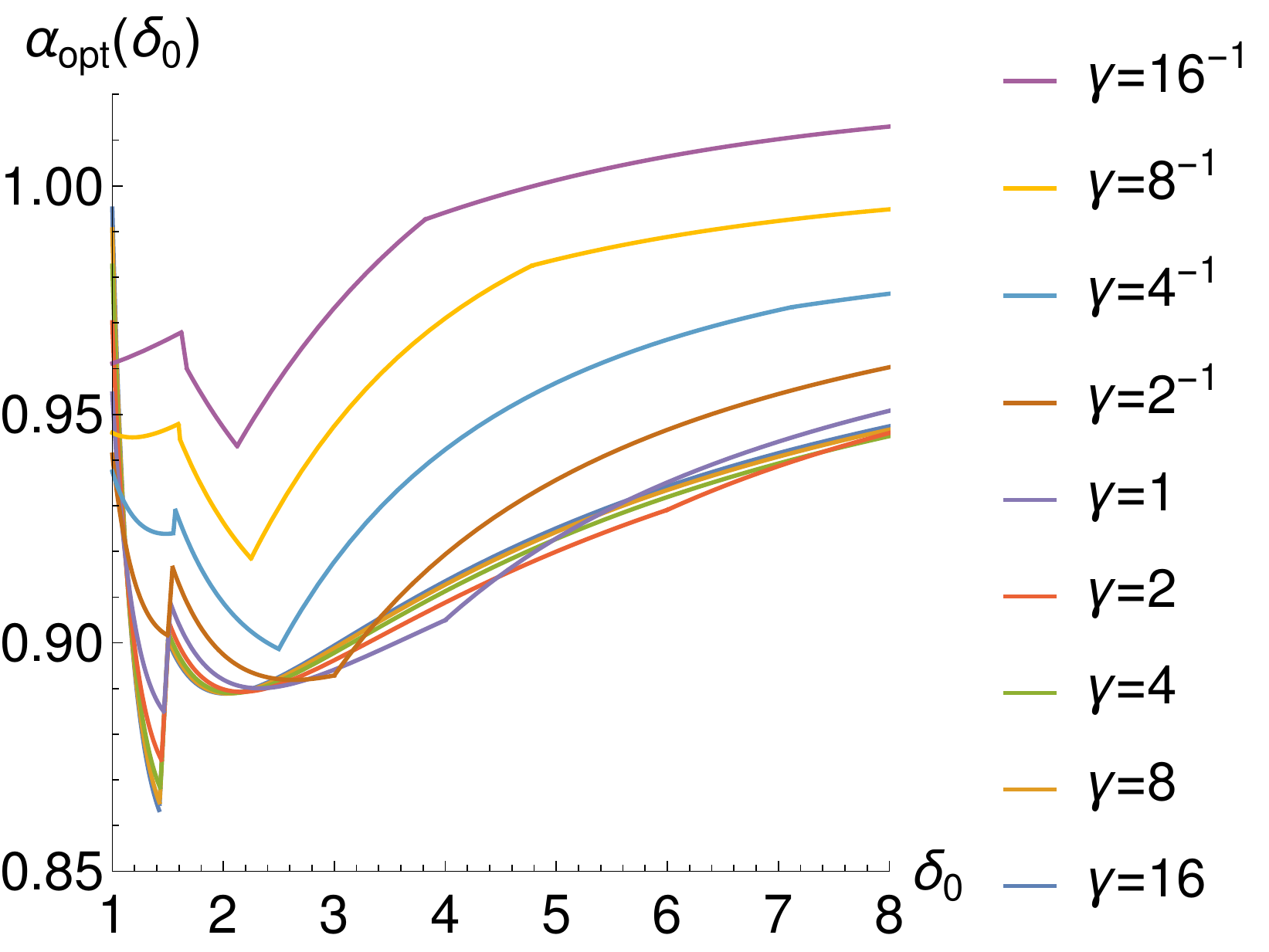}
  \includegraphics[width=0.49\textwidth]{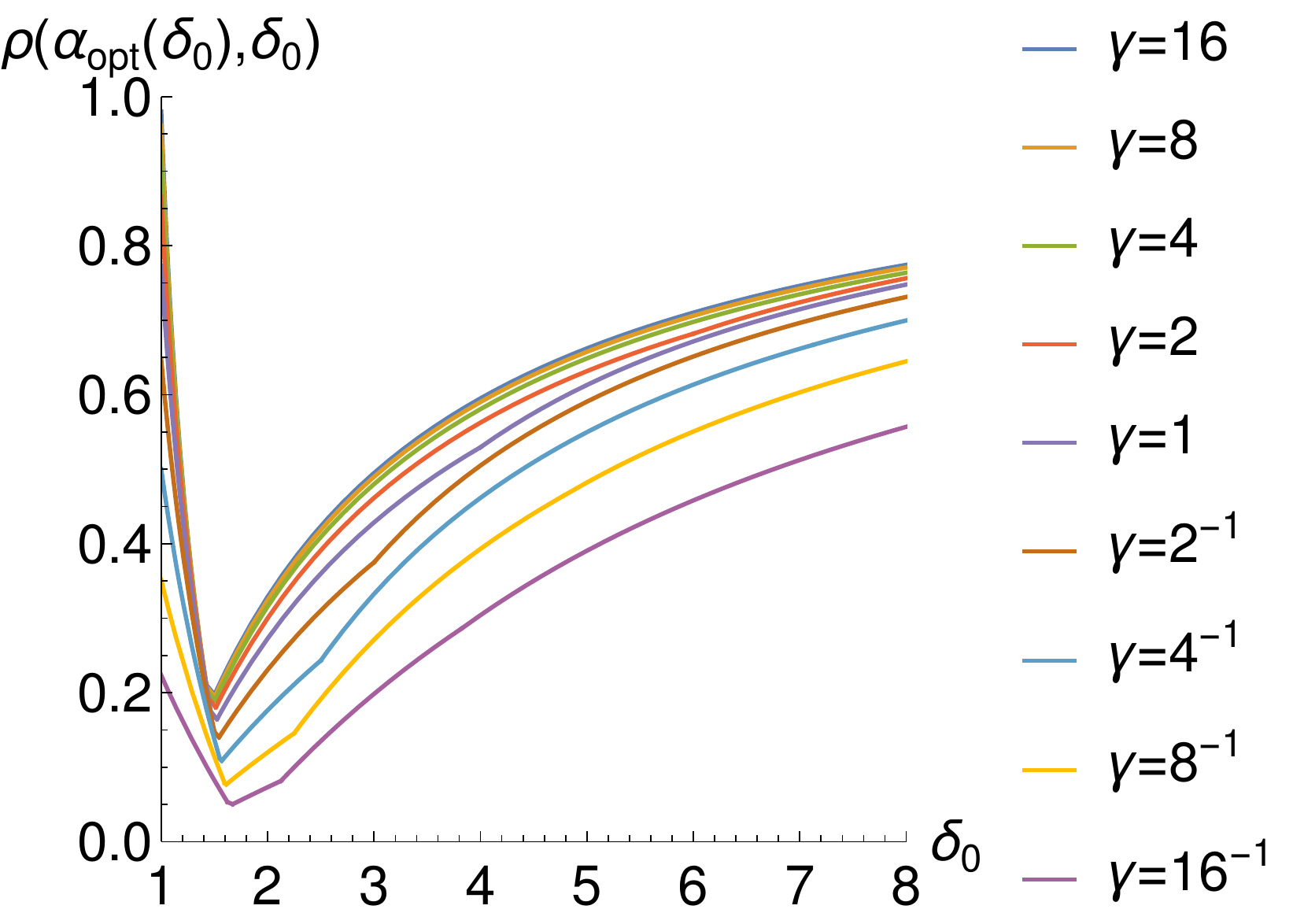}\\
  \caption{Optimized relaxation parameter $\alpha_\text{opt}(\dd)$ and
    corresponding convergence factor of Algorithm \ref{alg:precit} using
    a \emph{cell} block-Jacobi smoother as function of the stabilization
    parameter $\dd$ of the SIPG method for different reaction scalings
    $\gamma=\frac{\eps}{h^2}$.}
  \label{fig:rdaoptcell}
\end{figure}
shows the behavior of $\alpha_\text{opt}$ and the corresponding
convergence factor of the two-level method as a function of $\dd$ for
several values of the reaction scaling $\gamma=\frac{\eps}{h^2}$. From
the left plot in Figure \ref{fig:rdaoptcell}, we see that it would be
quite difficult to guess a good choice of the relaxation parameter
$\alpha$ without analysis. From the right plot in Figure
\ref{fig:rdaoptcell}, we see that the \emph{cell} block-Jacobi two
level method is also convergent for all values of the penalization
parameter $\dd>1$ and reaction scaling $\gamma$ when using the
optimized relaxation parameter $\alpha_\text{opt}$, and it has much
better convergence properties for moderate sizes of the penalization
parameter $\dd$ around 2 than the \emph{point} block-Jacobi two-level
method from Figure \ref{fig:rdaoptpoint}. However convergence is worse
for larger sizes of the penalization parameter $\dd$ than for the
\emph{point} block-Jacobi two-level method. We also see from the left
plot in Figure \ref{fig:rdaoptcell} that overrelaxation can become
necessary when the penalization parameter $\dd$ becomes large,
especially when $\gamma$ is small.

As in the case of Laplace's equation, we see that we obtain the best
performance for $\dd$ around $\frac32$, shown in Figure
\ref{fig:rdaoptcell} as the minimum of the curves on the right, and
this depends only little on the reaction scaling $\gamma$. This shows
that also in the reaction-diffusion case, choosing the penalization
parameter in SIPG wisely can make the associated iterative solver much
faster than just choosing it large enough, even with optimized
relaxation parameter $\alpha$!

\section{Numerical experiments}\label{NumSec}

We now show by numerical experiments that the expressions we obtained,
though quite lengthy in the reaction-diffusion case, are indeed very
good approximations of the optimal relaxation parameters, as a
function of the penalization parameter $\dd$ and in the reaction case
the reaction scaling $\gamma=\frac{\eps}{h^2}$.  To do so, we assemble
the system matrix on a uniform 64-element mesh, with Dirichlet
boundary conditions, and compute numerically the spectral radii of the
two-level operators using the QR method, as implemented in LAPACK
3.6.0, accessed with Python 3.5.2.

\subsection{\emph{Point} block-Jacobi smoother for the Poisson
equation} The dotted lines in Figure \ref{fig:ExpPoint}
\begin{figure}
  \begin{subfigure}[t]{0.49\textwidth}
    \includegraphics[width=\textwidth]{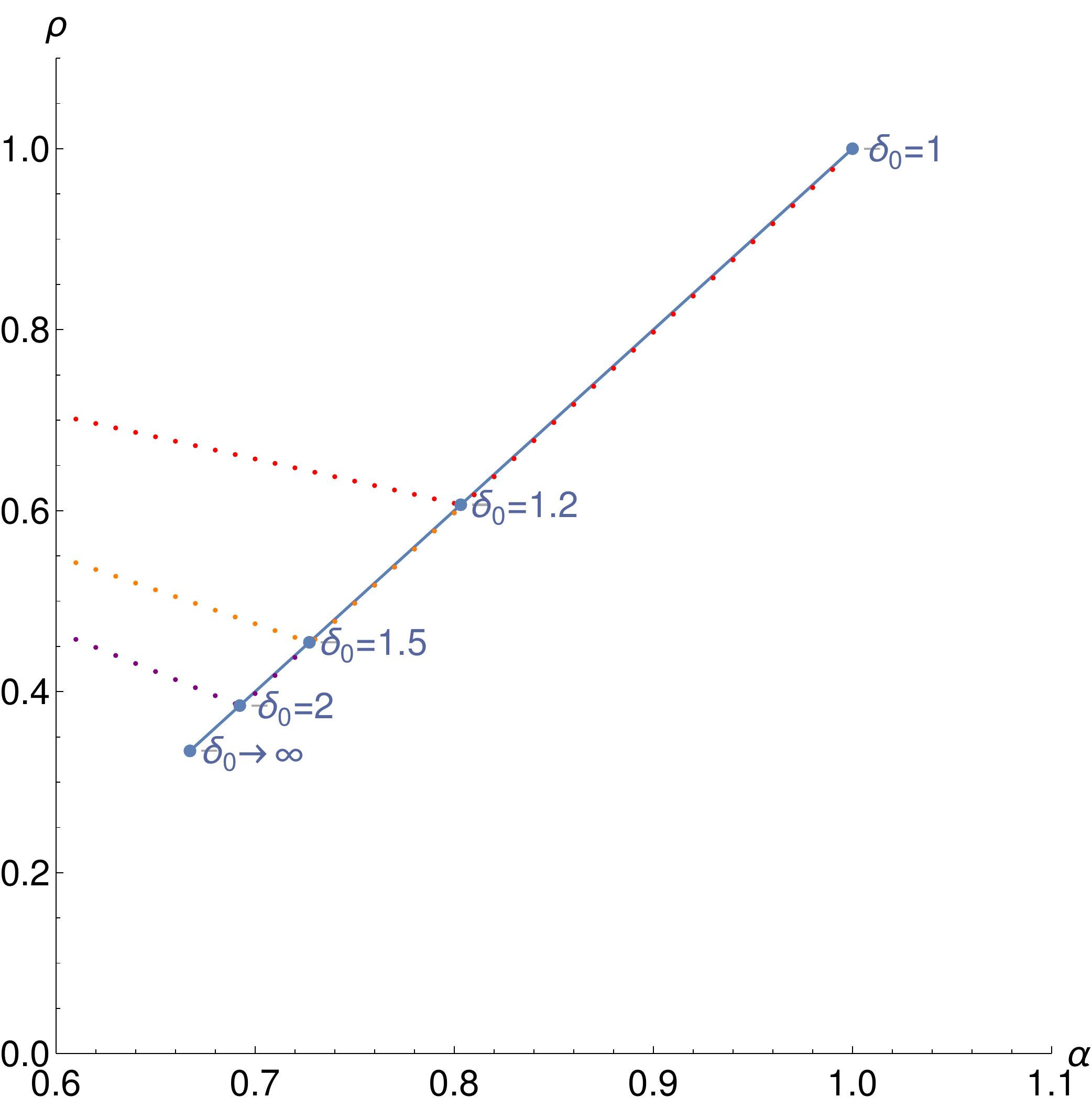}
    \caption{Numerically computed spectral radius using a \emph{point}
      block-Jacobi smoother to solve the Poisson equation. Red points:
      $\dd=1.2$, orange points: $\dd=1.5$, purple points:
      $\dd=2$. Blue points and blue line: predicted theoretically
      optimized spectral radius $\rho(\alpha_\text{opt})$.}
    \label{fig:ExpPoint}
  \end{subfigure}
  \begin{subfigure}[t]{0.49\textwidth}
    \includegraphics[width=\textwidth]{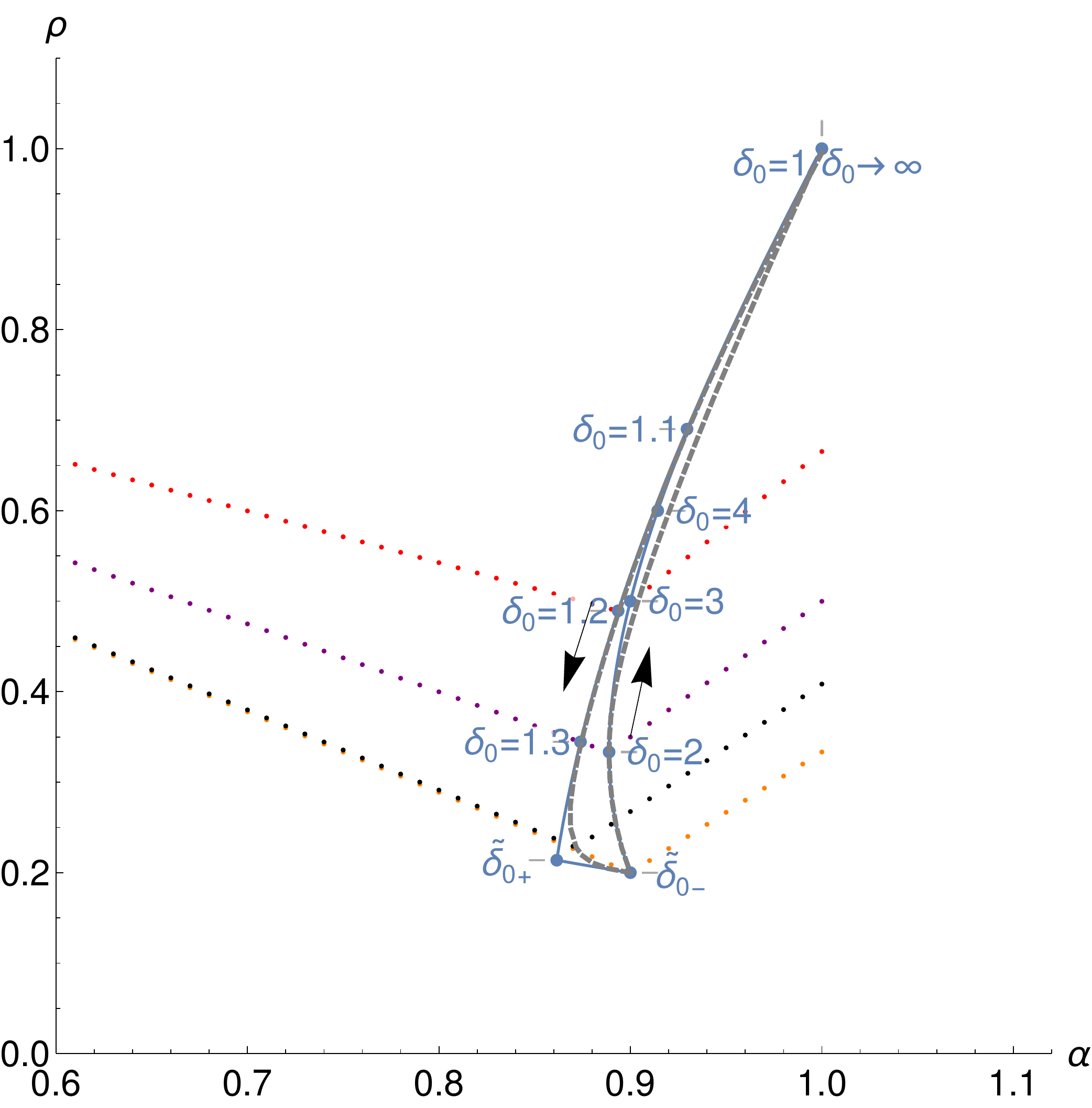}
    \caption{Numerically computed spectral radius using a \emph{cell}
      block-Jacobi smoother to solve the Poisson equation. Red points:
      $\dd=1.2$, orange points: $\dd=\widetilde{\dd}_-$, purple
      points: $\dd=2$, black points: $\dd=\widetilde{\dd}_+$. Dashed
      blue: entire curve of numerically computed optimized spectral
      radii. Solid blue: predicted theoretically optimized spectral
      radii $\rho(\alpha_\text{opt})$.}
    \label{fig:ExpCell}
  \end{subfigure}
  \caption{}
\end{figure}
are numerically computed spectral radii $\rho$ vs. relaxation
parameter $\alpha$ for $\dd = 1.2$ (red), for $\dd = 1.5$ (orange) and
for $\dd = 2$ (purple) for the two-level method with the \emph{point}
block-Jacobi smoother.  We see that they all attain a minimum value
giving fastest convergence, which coincides with the theoretical
prediction of Theorem \ref{thm:PoissonPoint} marked with blue dots and
a label indicating the value of $\dd$ used. We also added a
theoretical blue dot for $\dd = 1$ (top right) and $\dd \rightarrow
\infty$ (bottom left), and the entire theoretically predicted
parametric line $\rho(\alpha_\text{opt}(\dd),\dd)$, also in blue with
$\alpha_\text{opt}(\dd)$ from Theorem \ref{thm:PoissonPoint}. We see
that our theoretical result based on the typical LFA assumption of
periodic boundary conditions predicts the performance with Dirichlet
boundary conditions very well. One might be tempted to use large
values of $\dd$ in order to have as small a spectral radius as
possible, but for large $\dd$, the coarse problem is more difficult to
solve because the $\dd$ is doubled as we showed in
\S\ref{subsec:coarse} and the condition number of the unpreconditioned
coarse operator grows. It would be interesting to investigate if the
capacity of this smoother to deal with large values of $\dd$ can be
used to our advantage in a multigrid setting.

\subsection{\emph{Cell} block-Jacobi smoother for the Poisson
  equation}
The dotted lines in Figure \ref{fig:ExpCell} are numerically computed
spectral radii $\rho$ vs. relaxation parameter $\alpha$ for $\dd =
1.2$ (red), $\dd =\widetilde{\dd}_+ \approx 1.41964$ (black), $\dd
=\widetilde{\dd}_- = 1.5$ (orange) and $\dd = 2$ (purple) for the two
level method with the \emph{cell} block-Jacobi smoother. Like for the
\emph{point} block-Jacobi smoother they all attain a minimum value
which gives fastest convergence. With blue dots, we mark the
theoretical predictions of Theorem \ref{thm:PoissonCell}, also for a
few more values of $\dd\in\{1,1.1,1.3,4,\infty\}$. In contrast to the
\emph{point} block-Jacobi smoother case, the two values $\dd=1$ and
$\dd=\infty$ lead to the same point on the curve at the top right,
which shows that this method also deteriorates when $\dd$ becomes
large.  We also plot the entire theoretically predicted parametric
line $\rho(\alpha_\text{opt}(\dd),\dd)$ in solid blue with
$\alpha_\text{opt}(\dd)$ from Theorem \ref{thm:PoissonCell} and the
corresponding numerically determined one in dashed blue \footnote{We
did not plot this dashed line for the \emph{point} block-Jacobi
smoother case in Figure \ref{fig:ExpPoint}, since it would not have
been visible under the predicted line.}. This shows that the
theoretical prediction is very accurate, except for values around
$\dd\approx\widetilde{\dd}_+$ where there is a small difference.  We
checked that this is due to the Dirichlet boundary conditions, by
performing numerical experiments using periodic boundary conditions
which made the results match the predicted line. We also observed that
the dashed line approaches the predicted line when decreasing the mesh
size. Therefore, even though Theorem \ref{thm:PoissonCell} was
obtained with the typical LFA assumption of periodic boundary
conditions, the predictions are again very good also for the Dirichlet
case. Note that in contrast to the \emph{point} block-Jacobi case,
where best performance is achieved for large $\dd$, for \emph{cell}
block-Jacobi the best performance is achieved for
$\dd=\widetilde{\dd}_-$, and convergence is almost twice as fast as
for \emph{point} block-Jacobi with a similar value for $\dd$. Clearly,
also in practice, the DG penalization parameter influences very much
the performance of the two-level solver, even when using the best
possible relaxation parameter.

\subsection{\emph{Point} block-Jacobi smoother for the reaction-diffusion
  equation}
Results for the solution of a reaction-diffusion equation using a
two-level method with the \emph{point} block-Jacobi smoother are shown
in Figure \ref{fig:ExpRDPoint}.
\begin{figure}
  \centering
  \begin{subfigure}{0.49\textwidth}
    \includegraphics[width=\textwidth]{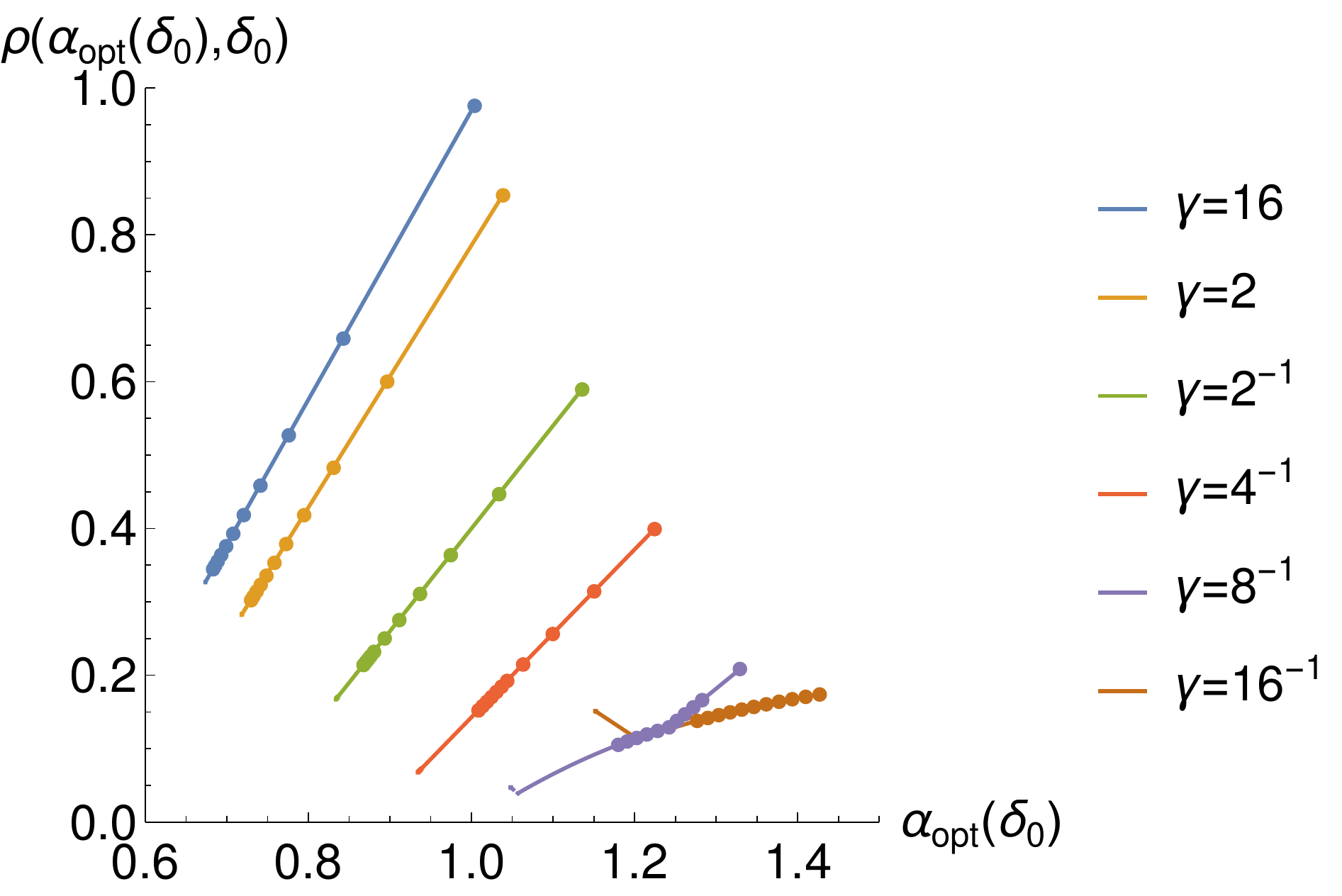}
    \caption{Measured spectral radius using a \emph{point} block-Jacobi
      smoother to solve a reaction-diffusion equation (points) overlayed on
      theoretically predicted values (solid line).}
    \label{fig:ExpRDPoint}
  \end{subfigure}
  \begin{subfigure}{0.49\textwidth}
    \includegraphics[width=\textwidth]{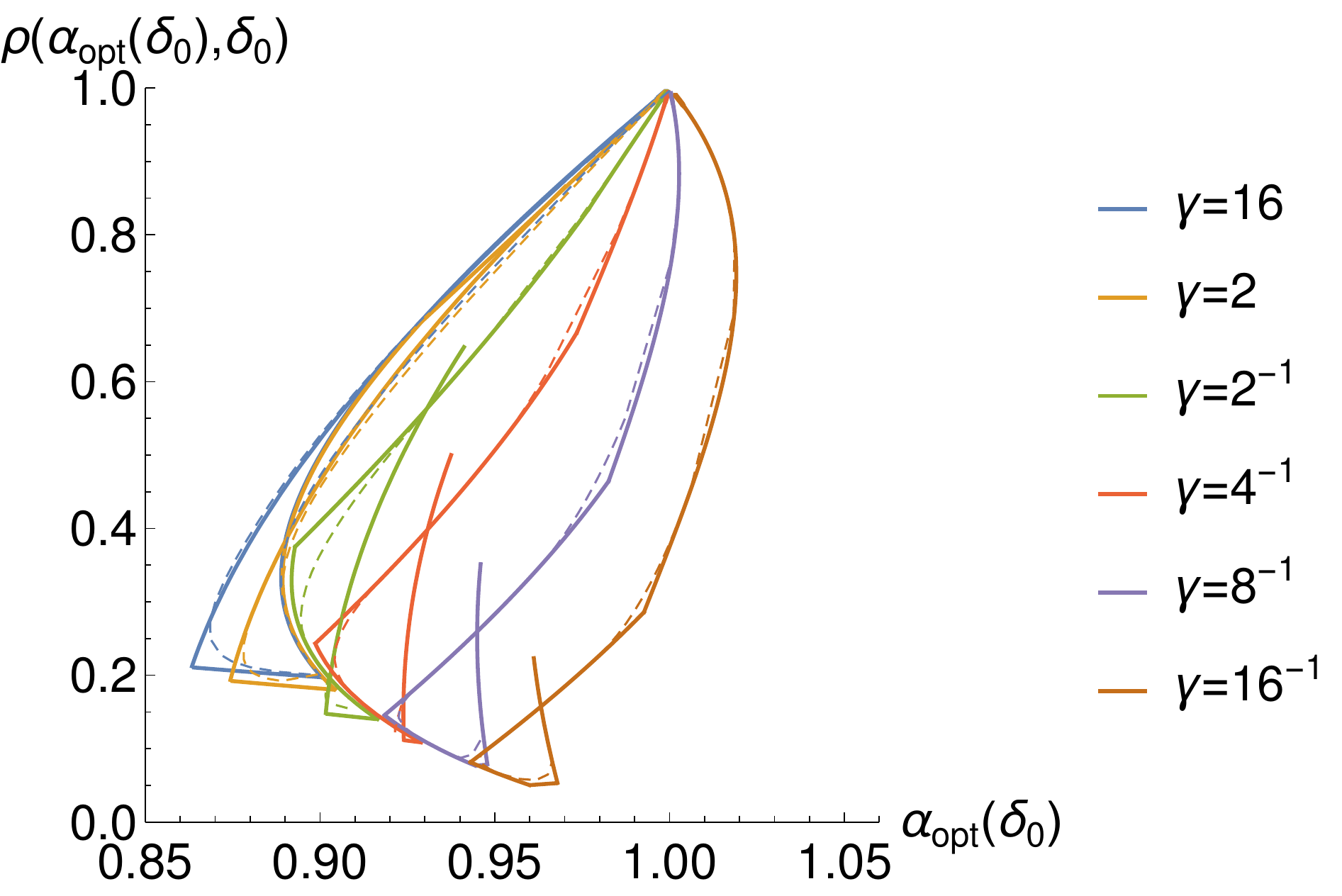}
    \caption{Measured spectral radius using a \emph{cell} block-Jacobi
      smoother to solve a reaction-diffusion equation (dashed line)
      overlayed on theoretically predicted values (solid line).}
    \label{fig:ExpRDCell}
  \end{subfigure}
  \caption{}
\end{figure}
Theoretically predicted parametric curves are shown for $\dd \in
[1,\infty)$, while numerically computed values are shown as points for
$\dd\in[1,50]$. The top right end of the curves corresponds to $\dd =
1$, while the bottom left end corresponds to $\dd \rightarrow \infty$.
In blue, we can see the measured $\rho_\text{opt}$,
$\alpha_\text{opt}$ as dots plotted on top of the predicted parametric
curve of the same color, for $\gamma = 16$. As expected, we see that a
large value of $\gamma$ almost reproduces the predicted curve that we
observed for the Poisson equation (c.f. Figure \ref{fig:ExpPoint}).
As we modify $\gamma$ and make it smaller (in orange, green, red,
violet and brown, for $\gamma=2, 2^{-1}, 4^{-1}, 8^{-1}, 16^{-1}$
respectively), the parametric curve moves towards the bottom right of
the figure, while keeping its shape until $\gamma \approx 7^{-1}$
where it features a point with discontinuous derivative. Keeping in
mind that the rightmost end of each curve corresponds to $\dd = 1$ and
the leftmost end corresponds to $\dd \rightarrow \infty$, we observe
that for any finite value of $\gamma$ the method is robust for any
value of $\dd$, i.e. the convergence factor remains bounded away from
1. Large values of $\gamma$ require underrelaxation, and small values
overrelaxation, and in between there are $\gamma$ values that require
both overrelaxation for small $\dd$ and underrelaxation for large
$\dd$ to be optimal. When $\gamma$ is very small, the regime becomes
insensitive to the values of $\dd$, which is expected since all the
terms in the bilinear form that describe derivatives are negligible in
comparison to the reaction term and even at very large values of
$\dd$, the \emph{point} block-Jacobi smoother can neutralize the
operator's dependency on $\dd$; see also the bottom curve in Figure
\ref{fig:rdaoptpoint} on the right.

\subsection{\emph{Cell} block-Jacobi smoother for the
  reaction-diffusion equation}
Results for the solution of a reaction-diffusion equation using a
two-level method with the \emph{cell} block-Jacobi smoother are shown
in Figure \ref{fig:ExpRDCell}. Theoretically predicted parametric
curves are shown for $\dd \in [1,\infty)$, while numerically computed
values are shown as dashed lines for $\dd\in[1,50]$.  All the curves
end at $\rho_\text{opt}=1$, $\alpha_\text{opt}=1$, while they begin at
smaller values of $\rho_\text{opt}$ for smaller values of $\gamma$.
Once again in blue, we show the measured $\rho_\text{opt}$,
$\alpha_\text{opt}$ with a dashed line, and the predicted value as a
solid line, for $\gamma=16$.  Such a large value of $\gamma$ is almost
equivalent to the Poisson equation and the shapes of the curves of
Figure \ref{fig:ExpCell} are reproduced.  When we set $\gamma$ to
smaller values (in orange, green, red, violet and brown, for
$\gamma=2, 2^{-1}, 4^{-1}, 8^{-1}, 16^{-1}$ respectively), we see that
convergence rapidly improves for values of $\dd$ that are order one,
including $\dd = 1$, represented as the beginning of the curve that
moves down and to the right of the figure. For moderate values of
$\dd$, very small values of $\gamma$ will even result in an exact
solver with the smoother alone. Convergence however still deteriorates
as $\dd \rightarrow \infty$, since, unlike the point block-Jacobi
smoother, the cell block-Jacobi smoother cannot neutralize the
operator’s dependency on $\dd$ for $\dd$ large.  The measured results
(dashed) and theoretically predicted ones (solid) show very good
agreement. Also, we see that small values of $\gamma$ can require
overrelaxation when $\dd$ becomes large.

\subsection{Higher dimensions and different geometries} \label{HDSec}
We now test our closed form optimized relaxation parameters from the
1D analysis in higher dimensions and on geometries and meshes that go
far beyond a simple tensor product generalization. We show in Figure
\ref{fig:higherdimensions}
\begin{figure}
  \tabcolsep0em
  \begin{tabular}{cc}
    \begin{tabular}{l}
      \includegraphics[width=0.2\textwidth]{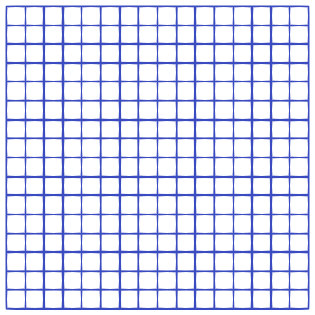}
    \end{tabular}&
    \begin{tabular}{l}
      \includegraphics[width=0.7\textwidth]{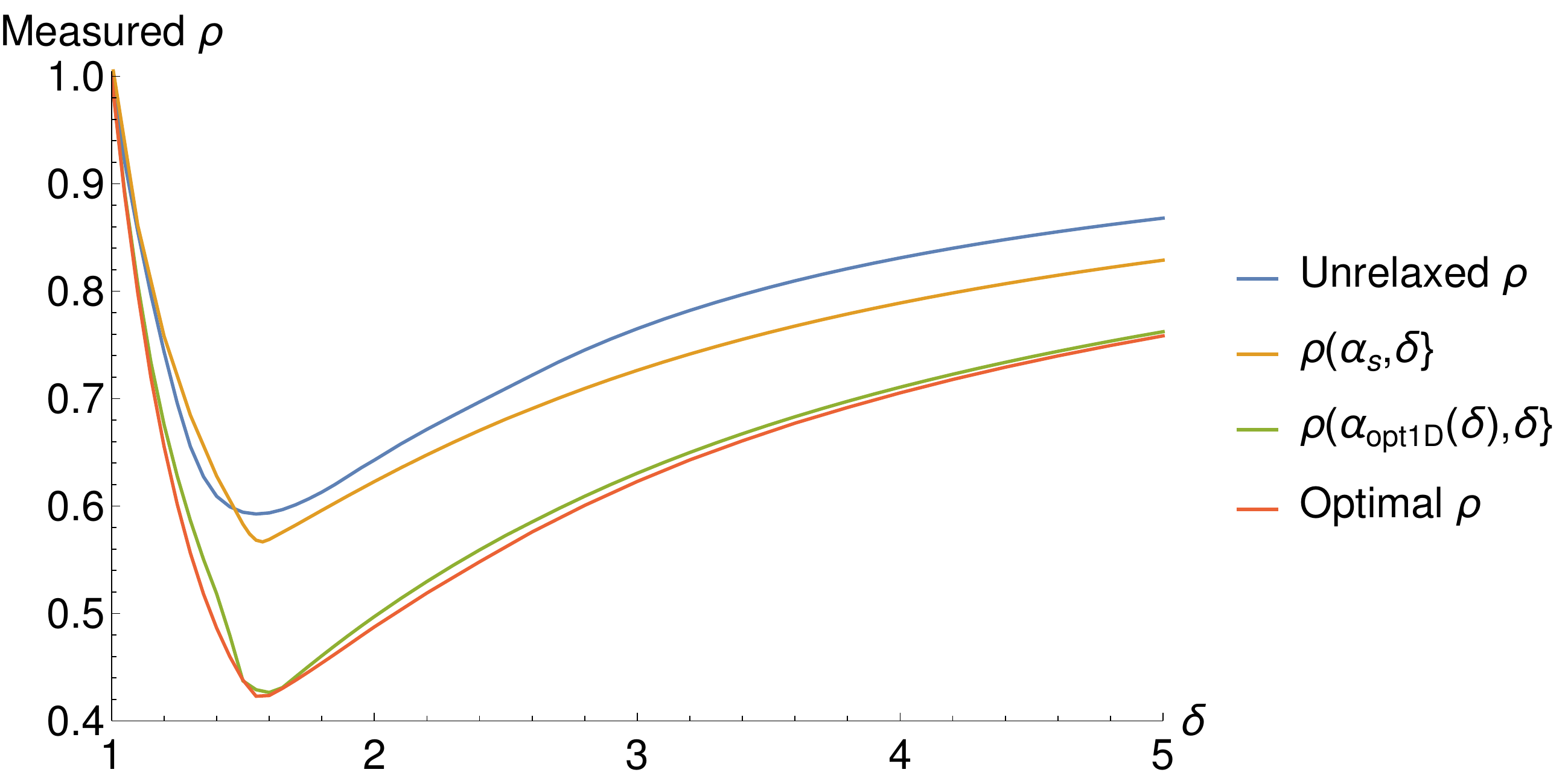}
    \end{tabular}\\
    \begin{tabular}{l}
      \includegraphics[width=0.2\textwidth]{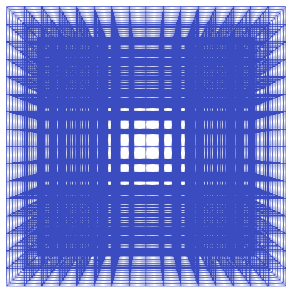}
    \end{tabular}&
    \begin{tabular}{l}
      \includegraphics[width=0.7\textwidth]{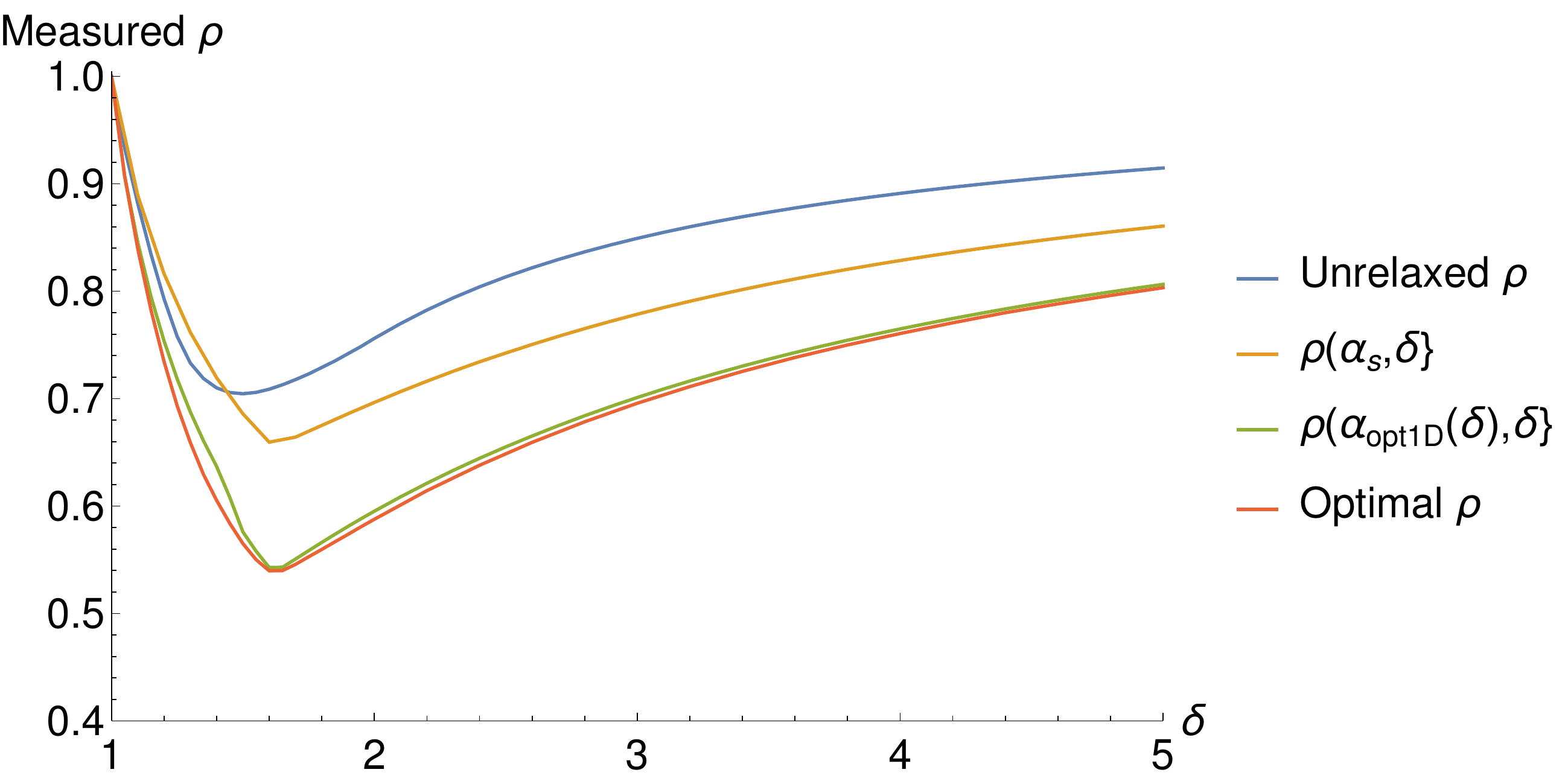}
    \end{tabular}\\
    \begin{tabular}{l}
      \includegraphics[width=0.29\textwidth]{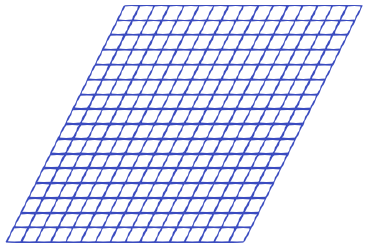}
    \end{tabular}&
    \begin{tabular}{l}
      \includegraphics[width=0.7\textwidth]{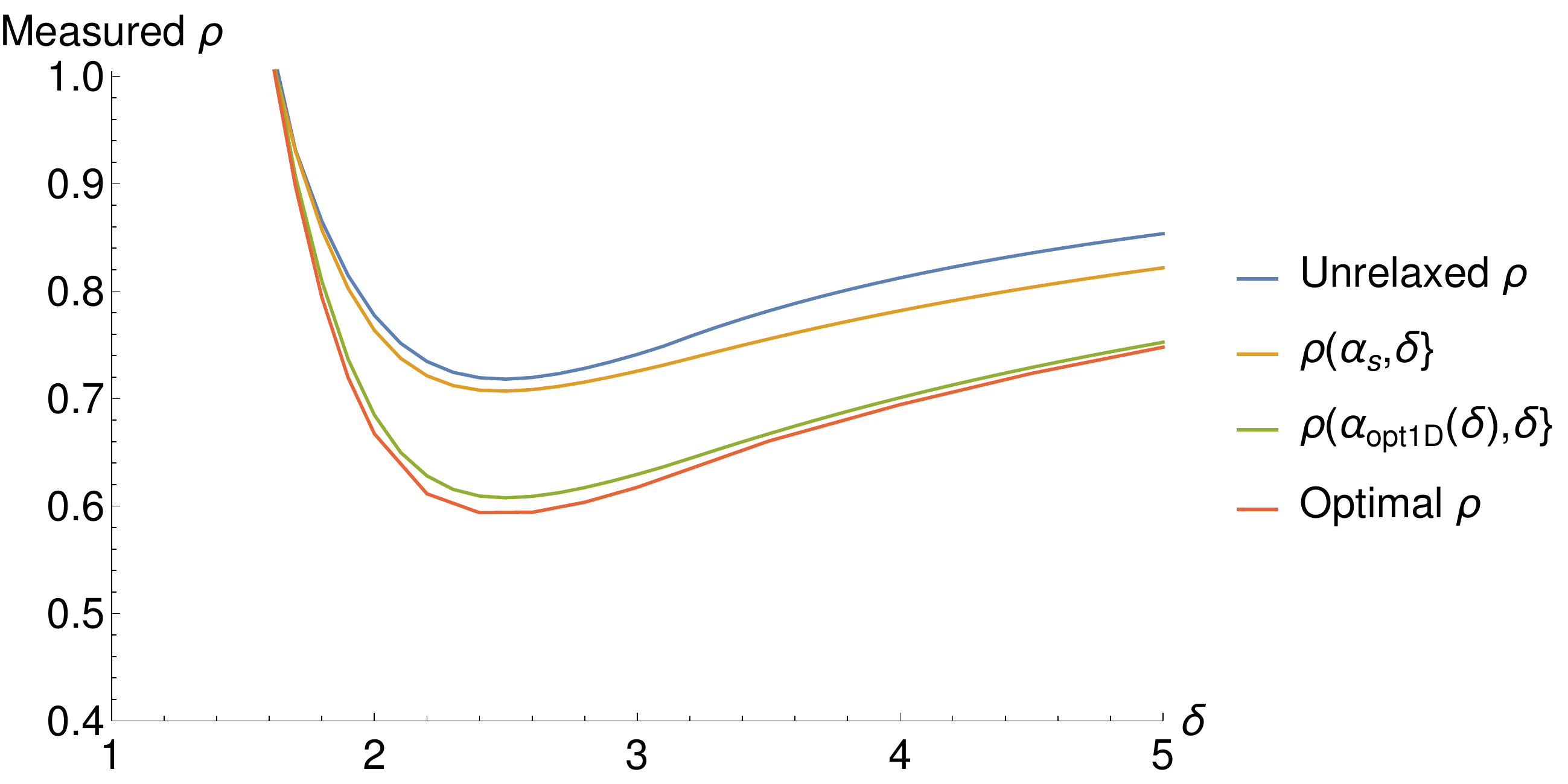}
    \end{tabular}\\
    \begin{tabular}{l}
      \includegraphics[width=0.23\textwidth]{DiskMesh}
    \end{tabular}&
    \begin{tabular}{l}
      \includegraphics[width=0.7\textwidth]{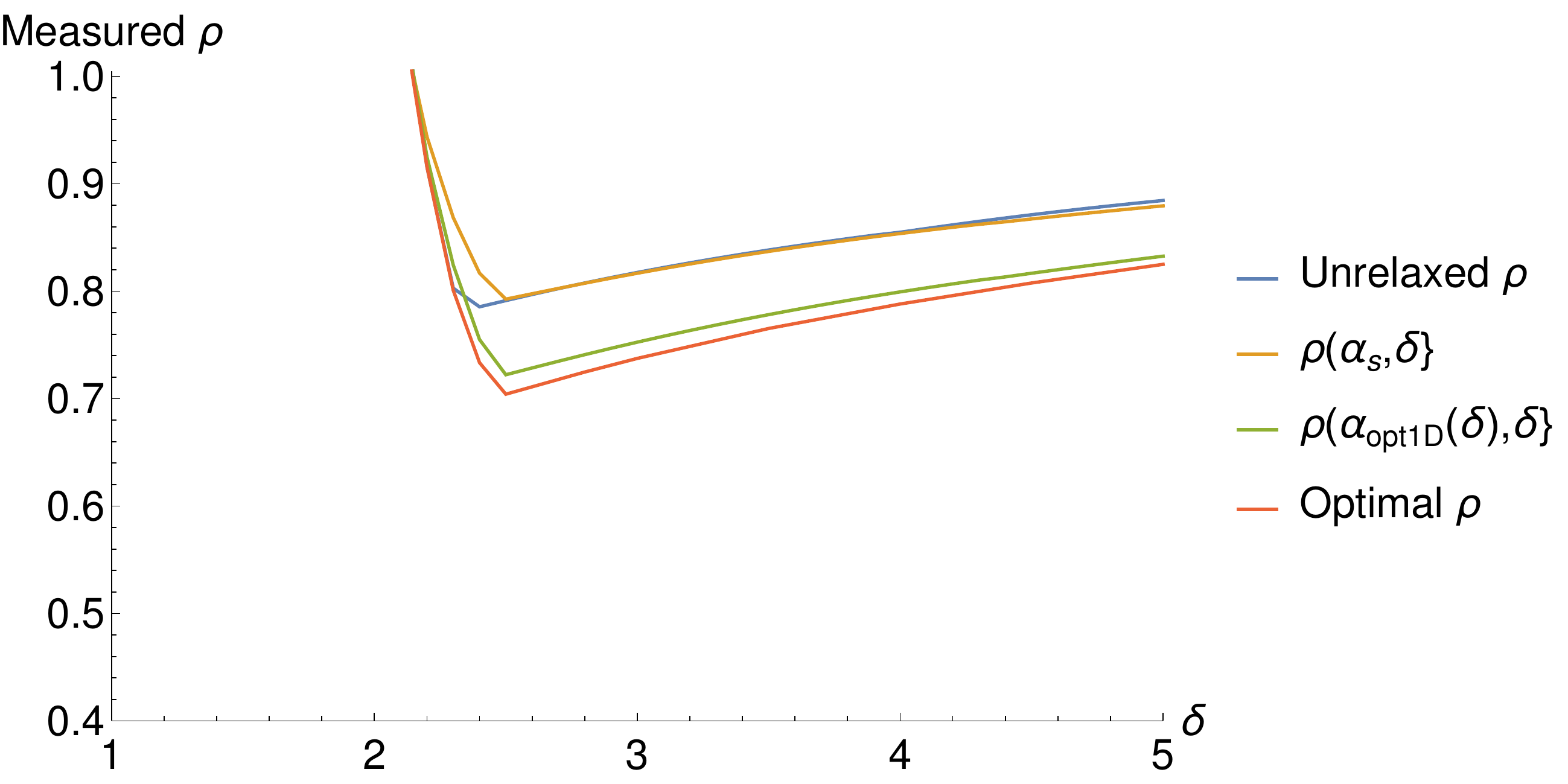}
    \end{tabular}
  \end{tabular}
  \caption{Comparison of the spectral radius of the two level operator
    for the Poisson problem on different geometries and meshes in higher
    dimensions. We compare the unrelaxed method, the relaxation
    $\alpha_s=2/3$ coming from the smoothing analysis alone, the optimized
    $\alpha_\text{opt1D}$ from Theorem \ref{thm:PoissonCell}, and the
    numerically optimal choice.}
  \label{fig:higherdimensions}
\end{figure}
a set of comparisons of the optimality of our closed form optimized
relaxation parameters for the Poisson problem, using \emph{cell}
block-Jacobi smoothers. In each case, we show the mesh used and a
comparison between the unrelaxed method, the relaxation of $2/3$
coming from the smoothing analysis alone, the one predicted by Theorem
\ref{thm:PoissonCell}, and the numerically best performing one. The
closeness between our closed form optimized parameters from the 1D
analysis and the numerically best working one in higher dimensions is
clear evidence that the seminal quote from P. W. Hemker in footnote
\ref{HemkerFootnote} is more than justified.

\section{Conclusion}

We optimized the relaxation parameter in two-level iterative methods
for solving symmetric interior penalty discontinuous Galerkin
discretized Poisson and reaction-diffusion equations using a
\emph{cell} block-Jacobi and a \emph{point} block-Jacobi smoother. Our
optimization for the complete two-level process shows that the
\emph{cell} block-Jacobi smoother leads to a more effective two-level
method for moderate sizes of the penalization parameter, while the
\emph{point} block-Jacobi smoother is superior for large penalization
parameters. Our analysis also reveals that the penalization parameter
in SIPG should not only be chosen large enough such that the DG method
converges, but it can be chosen to optimize the performance of the
associated iterative two-level solver. A good choice can lead to an
iterative solver that converges an order of magnitude faster than
other choices, and this even using the best possible relaxation
parameter in the smoother. While we performed our analysis in 1D, our
numerical experiments in higher dimensions on irregular domains with
irregular meshes clearly show that our closed form optimized
relaxation parameters work very well also in these situations, with
very close to best possible performance of the SIPG two level method.

\bibliographystyle{unsrt}
\bibliography{dgrdlfa}

\appendix

\section{Reaction-diffusion iteration operator eigenvalue coefficients
  using a \emph{point} block-Jacobi smoother}\label{apx:pointc}
\begin{align*}
  c_1 =& -8640 \alpha \dd^2 \gamma^4-14400 \alpha \dd^2 \gamma^3-2544
         \alpha \dd^2 \gamma^2 +6912 \alpha \dd \gamma^4 \\
       &+7776 \alpha \dd \gamma^3 -3744 \alpha \dd \gamma^2-992 \alpha \dd
         \gamma -864\alpha \gamma^4+288 \alpha \gamma^3 +2208 \alpha \gamma^2
  \\
       &-80 \alpha+6912 \dd^2 \gamma^4 +11520 \dd^2\gamma^3+3072 \dd^2
         \gamma^2-5184 \dd \gamma^4 -5184 \dd \gamma^3\\
       &+2688 \dd \gamma^2+1280 \dd \gamma+864 \gamma^4-1248 \gamma^2+128 \\
  c_2 =& -384 \gamma + 240 \alpha \gamma + 256 \dd \gamma - 160 \alpha
         \dd \gamma+1392 \alpha \gamma^2 - 2688 \dd \gamma^2 \\
       &+ 480 \alpha \dd \gamma^2+1536 \dd^2 \gamma^2-960 \alpha \dd^2
         \gamma^2 + 3456 \gamma^3 - 3168 \alpha \gamma^3 - 9216 \dd \gamma^3 \\
       &+12096 \alpha \dd \gamma^3 +2304 \dd^2 \gamma^3 - 5760 \alpha \dd^2
         \gamma^3 + 3456 \dd \gamma^4-3456 \alpha \dd \gamma^4 \\
       &- 6912 \dd^2 \gamma^4 +6912 \alpha \dd^2 \gamma^4 \\
  c_3 =& 96 \gamma^2 + 144 \alpha \gamma^2 - 192 \alpha \dd \gamma^2 +
         48 \alpha \dd^2 \gamma^2-576 \alpha \gamma^3 + 576 \dd \gamma^3 \\
       &+ 864 \alpha \dd \gamma^3 -576 \alpha \dd^2 \gamma^3-864 \gamma^4 +
         864 \alpha \gamma^4 + 1728 \dd \gamma^4 - 3456 \alpha \dd \gamma^4 \\
       &+1728 \alpha \dd^2 \gamma^4 \\
  c_4 =& 2985984 \alpha^2 \dd^4 \gamma^8+9953280 \alpha^2 \dd^4
         \gamma^7+6469632 \alpha^2 \dd^4 \gamma^6-3041280 \alpha^2 \dd^4
         \gamma^5 \\
       &+278784 \alpha^2 \dd^4 \gamma^4-5971968 \alpha^2 \dd^3
         \gamma^8-18911232 \alpha^2 \dd^3 \gamma^7-9123840 \alpha^2 \dd^3
         \gamma^6 \\
       &+8487936 \alpha^2 \dd^3 \gamma^5-2442240 \alpha^2 \dd^3
         \gamma^4+353280 \alpha^2 \dd^3 \gamma^3+5971968 \alpha^2 \dd^2
         \gamma^8 \\
       &+18911232 \alpha^2 \dd^2 \gamma^7+8957952 \alpha^2 \dd^2
         \gamma^6-8543232 \alpha^2 \dd^2 \gamma^5+1833984 \alpha^2 \dd^2
         \gamma^4 \\
       &-1373184 \alpha^2 \dd^2 \gamma^3+100864 \alpha^2 \dd^2
         \gamma^2-746496 \alpha^2 \dd \gamma^8+248832 \alpha^2 \dd \gamma^7 \\
       &+10368000 \alpha^2 \dd \gamma^6+13906944 \alpha^2 \dd
         \gamma^5+2062080 \alpha^2 \dd \gamma^4+856320 \alpha^2 \dd \gamma^3 \\
       &-276480 \alpha^2 \dd \gamma^2+8192 \alpha^2 \dd \gamma-248832
         \alpha^2 \gamma^7-829440 \alpha^2 \gamma^6+359424 \alpha^2 \gamma^5 \\
       &+2062080 \alpha^2 \gamma^4+734976 \alpha^2 \gamma^3+195072 \alpha^2
         \gamma^2-9216 \alpha^2 \gamma+256 \alpha^2 \\
  c_5 =& 11943936 \alpha^2 \dd^4 \gamma^7+17915904 \alpha^2 \dd^4
         \gamma^6-6967296 \alpha^2 \dd^4 \gamma^5+608256 \alpha^2 \dd^4
         \gamma^4 \\
       &-21897216 \alpha^2 \dd^3 \gamma^7-25214976 \alpha^2 \dd^3
         \gamma^6+25712640 \alpha^2 \dd^3 \gamma^5-5981184 \alpha^2 \dd^3
         \gamma^4 \\
       &+457728 \alpha^2 \dd^3 \gamma^3+20901888 \alpha^2 \dd^2
         \gamma^7+25049088 \alpha^2 \dd^2 \gamma^6-20542464 \alpha^2 \dd^2
         \gamma^5 \\
       &+10243584 \alpha^2 \dd^2 \gamma^4-2339328 \alpha^2 \dd^2
         \gamma^3+83968 \alpha^2 \dd^2 \gamma^2-3732480 \alpha^2 \dd \gamma^8
  \\
       &-17169408 \alpha^2 \dd \gamma^7-12773376 \alpha^2 \dd
         \gamma^6+12690432 \alpha^2 \dd \gamma^5-2449152 \alpha^2 \dd \gamma^4
  \\
       &+2492160 \alpha^2 \dd \gamma^3-313344 \alpha^2 \dd \gamma^2+4096
         \alpha^2 \dd \gamma+746496 \alpha^2 \gamma^8 \\
       &+1741824 \alpha^2 \gamma^7-1575936 \alpha^2 \gamma^6-4810752 \alpha^2
         \gamma^5+126720 \alpha^2 \gamma^4 -292608 \alpha^2 \gamma^3\\
       &+201216 \alpha^2 \gamma^2-10752 \alpha^2 \gamma \\
  c_6 =& -5971968 \alpha^2 \dd^4 \gamma^8-7962624 \alpha^2 \dd^4
         \gamma^7+16920576 \alpha^2 \dd^4 \gamma^6-4866048 \alpha^2 \dd^4
         \gamma^5 \\
       &+382464 \alpha^2 \dd^4 \gamma^4+11943936 \alpha^2 \dd^3
         \gamma^8+11943936 \alpha^2 \dd^3 \gamma^7-36163584 \alpha^2 \dd^3
         \gamma^6 \\
       &+23003136 \alpha^2 \dd^3 \gamma^5-4174848 \alpha^2 \dd^3
         \gamma^4+89088 \alpha^2 \dd^3 \gamma^3- 11943936 \alpha^2 \dd^2
         \gamma^8\\
       &-10948608 \alpha^2 \dd^2 \gamma^7+35997696 \alpha^2 \dd^2 \gamma^6-
         19491840 \alpha^2 \dd^2 \gamma^5+11828736 \alpha^2 \dd^2 \gamma^4\\
       &-685056 \alpha^2 \dd^2 \gamma^3- 512 \alpha^2 \dd^2 \gamma^2+4478976
         \alpha^2 \dd \gamma^8-7464960 \alpha^2 \dd \gamma^7\\
       &- 35168256 \alpha^2 \dd \gamma^6-1852416 \alpha^2 \dd
         \gamma^5-8808192 \alpha^2 \dd \gamma^4+ 1552128 \alpha^2 \dd
         \gamma^3\\
       &+4976640 \alpha^2 \gamma^7+8792064 \alpha^2 \gamma^6- 663552 \alpha^2
         \gamma^5+105984 \alpha^2 \gamma^4-988416 \alpha^2 \gamma^3\\
       &+2304 \alpha^2 \gamma^2 \\
  c_7 =& -11943936 \alpha^2 \dd^4 \gamma^7+5971968 \alpha^2 \dd^4
         \gamma^6- 995328 \alpha^2 \dd^4 \gamma^5+55296 \alpha^2 \dd^4
         \gamma^4\\
       &+21897216 \alpha^2 \dd^3 \gamma^7- 22560768 \alpha^2 \dd^3
         \gamma^6+6137856 \alpha^2 \dd^3 \gamma^5-654336 \alpha^2 \dd^3
         \gamma^4\\
       &- 15360 \alpha^2 \dd^3 \gamma^3-20901888 \alpha^2 \dd^2
         \gamma^7+22063104 \alpha^2 \dd^2 \gamma^6- 10202112 \alpha^2 \dd^2
         \gamma^5\\
       &+2585088 \alpha^2 \dd^2 \gamma^4+84480 \alpha^2 \dd^2 \gamma^3+
         4478976 \alpha^2 \dd \gamma^8+17418240 \alpha^2 \dd \gamma^7\\
       &-10450944 \alpha^2 \dd \gamma^6+ 1907712 \alpha^2 \dd
         \gamma^5-4020480 \alpha^2 \dd \gamma^4-145152 \alpha^2 \dd \gamma^3\\
       &- 1492992 \alpha^2 \gamma^8-1990656 \alpha^2 \gamma^7+7382016
         \alpha^2 \gamma^6+ 3704832 \alpha^2 \gamma^5\\
       &+2080512 \alpha^2 \gamma^4+76032 \alpha^2 \gamma^3 \\
  c_8 =& 2985984 \alpha^2 \dd^4 \gamma^8-1990656 \alpha^2 \dd^4
         \gamma^7+ 497664 \alpha^2 \dd^4 \gamma^6-55296 \alpha^2 \dd^4
         \gamma^5\\
       &+2304 \alpha^2 \dd^4 \gamma^4- 5971968 \alpha^2 \dd^3
         \gamma^8+6967296 \alpha^2 \dd^3 \gamma^7-2488320 \alpha^2 \dd^3
         \gamma^6\\
       &+ 359424 \alpha^2 \dd^3 \gamma^5-18432 \alpha^2 \dd^3
         \gamma^4+5971968 \alpha^2 \dd^2 \gamma^8- 7962624 \alpha^2 \dd^2
         \gamma^7\\
       &+3483648 \alpha^2 \dd^2 \gamma^6-940032 \alpha^2 \dd^2 \gamma^5+
         50688 \alpha^2 \dd^2 \gamma^4-3732480 \alpha^2 \dd \gamma^8\\
       &+7216128 \alpha^2 \dd \gamma^7+ 248832 \alpha^2 \dd \gamma^6+1216512
         \alpha^2 \dd \gamma^5-55296 \alpha^2 \dd \gamma^4 \\
       &-4727808 \alpha^2 \gamma^7-1824768 \alpha^2 \gamma^6-580608 \alpha^2
         \gamma^5+20736 \alpha^2 \gamma^4 \\
  c_9 =& -746496 \alpha^2 \dd \gamma^8-248832 \alpha^2 \dd
         \gamma^7+746496 \alpha^2 \gamma^8+ 248832 \alpha^2 \gamma^7 \\
  c_{10} =& 6912 \dd^2 \gamma^4+11520 \dd^2 \gamma^3+3072 \dd^2
            \gamma^2-5184 \dd \gamma^4-5184 \dd \gamma^3\\
       &+ 2688 \dd \gamma^2+1280 \dd \gamma+864 \gamma^4-1248 \gamma^2+128 \\
  c_{11} =& -6912 \dd^2 \gamma^4+2304 \dd^2 \gamma^3+1536 \dd^2
            \gamma^2+3456 \dd \gamma^4-9216 \dd \gamma^3- 2688 \dd \gamma^2\\
       &+256 \dd \gamma+3456 \gamma^3-384 \gamma \\
  c_{12} =& 1728 \dd \gamma^4+576 \dd \gamma^3-864 \gamma^4+96 \gamma^2
\end{align*}

\section{Reaction-diffusion iteration operator eigenvalue coefficients
  using a \emph{cell} block-Jacobi smoother}\label{apx:cellc}
\begin{align*} 
  c_1 =& 16 (-144 \alpha \dd^3 \gamma^4-192 \alpha \dd^3
         \gamma^3-36 \alpha \dd^2 \gamma^4- 216 \alpha \dd^2 \gamma^3\\
       &-170 \alpha \dd^2 \gamma^2+72 \alpha \dd \gamma^4+ 96\alpha \dd
         \gamma^3-84 \alpha \dd \gamma^2-50 \alpha \dd \gamma+36 \alpha
         \gamma^3\\
       &+ 60 \alpha \gamma^2-4\alpha+144 \dd^3 \gamma^4+192 \dd^3 \gamma^3-36
         \dd^2 \gamma^4+ 96 \dd^2 \gamma^3+176 \dd^2 \gamma^2-24 \dd \gamma^3\\
       &+12 \dd \gamma^2+48 \dd \gamma- 3 \gamma^2+4 ) \\
  c_2 =& 16 (144 \alpha \dd^3 \gamma^4-96 \alpha \dd^3 \gamma^3+72
         \alpha \dd^2 \gamma^4+ 216 \alpha \dd^2 \gamma^3-46 \alpha \dd^2
         \gamma^2\\
       &-72 \alpha \dd \gamma^4+ 60 \alpha \dd \gamma^3+72 \alpha \dd
         \gamma^2-4 \alpha \dd \gamma-36 \alpha \gamma^3+ 12 \alpha \gamma^2\\
       &+6 \alpha \gamma-144 \dd^3 \gamma^4+96 \dd^3 \gamma^3- 240 \dd^2
         \gamma^3+64 \dd^2 \gamma^2-108 \dd \gamma^2+8 \dd \gamma-12\gamma )\\
  c_3 =& 16 (-36 \alpha \dd^2 \gamma^4-12 \alpha \dd \gamma^3+36 \dd^2
         \gamma^4 +24 \dd \gamma^3+3 \gamma^2 ) \\
  c_4 =& 1024 \alpha^2 \gamma^2 (5184 \dd^6 \gamma^6+13824 \dd^6
         \gamma^5 +9216 \dd^6 \gamma^4-18144 \dd^5 \gamma^6-43200 \dd^5
         \gamma^5 \\
       &-17136 \dd^5 \gamma^4+10944 \dd^5 \gamma^3+21060 \dd^4 \gamma^6+36720
         \dd^4 \gamma^5 -16236 \dd^4 \gamma^4\\
       &-33624 \dd^4 \gamma^3+4665 \dd^4 \gamma^2-9072 \dd^3 \gamma^6 -864
         \dd^3 \gamma^5+41760 \dd^3 \gamma^4+23292 \dd^3 \gamma^3\\
       &-16140 \dd^3 \gamma^2+858 \dd^3 \gamma+1944 \dd^2 \gamma^6-3672 \dd^2
         \gamma^5-12384 \dd^2 \gamma^4 +7524 \dd^2 \gamma^3\\
       &+15018 \dd^2 \gamma^2-3096 \dd^2 \gamma+61 \dd^2+1836 \dd \gamma^5
         +2592 \dd \gamma^4-2340 \dd \gamma^3\\
       &-1116 \dd \gamma^2+2931 \dd \gamma-228 \dd+432 \gamma^4 +1080
         \gamma^3+504 \gamma^2-72 \gamma+219) \\
  c_5 =& 1024 \alpha^2 \gamma^2 (-10368 \dd^6 \gamma^6-6912 \dd^6
         \gamma^5 +9216 \dd^6 \gamma^4+33696 \dd^5 \gamma^6+5184 \dd^5
         \gamma^5\\
       &-46944 \dd^5 \gamma^4 +10656 \dd^5 \gamma^3-37584 \dd^4
         \gamma^6+23760 \dd^4 \gamma^5+65988 \dd^4 \gamma^4 \\
       &-48960 \dd^4 \gamma^3+4518 \dd^4 \gamma^2+16848 \dd^3 \gamma^6-34776
         \dd^3 \gamma^5 -23616 \dd^3 \gamma^4+65916 \dd^3 \gamma^3\\
       &-19836 \dd^3 \gamma^2+834 \dd^3 \gamma -3888 \dd^2 \gamma^6+14040
         \dd^2 \gamma^5-4752 \dd^2 \gamma^4-26532 \dd^2 \gamma^3 \\
       &+24900 \dd^2 \gamma^2-3498 \dd^2 \gamma+56 \dd^2-3672 \dd
         \gamma^5+1944 \dd \gamma^4 +2772 \dd \gamma^3\\
       &-8028 \dd \gamma^2+3960 \dd \gamma-222 \dd-864 \gamma^4-432 \gamma^3
         +576 \gamma^2-756 \gamma+216) \\
  c_6 =& 1024 \alpha^2 \gamma^2 (5184 \dd^6 \gamma^6-6912 \dd^6 \gamma^5
         +2304 \dd^6 \gamma^4-12960 \dd^5 \gamma^6+36288 \dd^5 \gamma^5\\
       &-18864 \dd^5 \gamma^4 +2592 \dd^5 \gamma^3+12312 \dd^4 \gamma^6-54000
         \dd^4 \gamma^5+52380 \dd^4 \gamma^4 -15912 \dd^4 \gamma^3\\
       &+1041 \dd^4 \gamma^2-6480 \dd^3 \gamma^6+30888 \dd^3 \gamma^5 -54720
         \dd^3 \gamma^4+33012 \dd^3 \gamma^3-5640 \dd^3 \gamma^2\\
       &+156 \dd^3 \gamma +1296 \dd^2 \gamma^6-11880 \dd^2 \gamma^5+19476
         \dd^2 \gamma^4-25236 \dd^2 \gamma^3 +10038 \dd^2 \gamma^2\\
       &-762 \dd^2 \gamma+4 \dd^2+1296 \dd \gamma^5-6480 \dd \gamma^4 +3636
         \dd \gamma^3-6228 \dd \gamma^2+1209 \dd \gamma\\
       &-12 \dd+324 \gamma^4-1080 \gamma^3 +36 \gamma^2-684 \gamma+6) \\
  c_7 =& 1024 \alpha^2 \gamma^2 (-2592 \dd^5 \gamma^6+1728 \dd^5
         \gamma^5 +3888 \dd^4 \gamma^6-6480 \dd^4 \gamma^5+1548 \dd^4
         \gamma^4\\
       &-1296 \dd^3 \gamma^6 +4536 \dd^3 \gamma^5-4896 \dd^3 \gamma^4+468
         \dd^3 \gamma^3+1296 \dd^2 \gamma^6 +1512 \dd^2 \gamma^5\\
       &+2808 \dd^2 \gamma^4-1548 \dd^2 \gamma^3+48 \dd^2 \gamma^2 +1080 \dd
         \gamma^5+1944 \dd \gamma^4+1116 \dd \gamma^3\\
       &-180 \dd \gamma^2+216 \gamma^4 +432 \gamma^3+180 \gamma^2) \\
  c_8 =& 1024 \alpha^2 \gamma^2 (324 \dd^4 \gamma^6+216 \dd^3 \gamma^5
         -648 \dd^2 \gamma^6+36 \dd^2 \gamma^4-540 \dd \gamma^5-108 \gamma^4)
  \\
  c_9 =& 8(2 \dd \gamma+1)(6 \dd \gamma+1)(3 (8 \dd-1) \gamma^2+32 \dd
         \gamma-3 \gamma^2+8) \\
  c_{10} =& -64 \gamma (2 \dd \gamma+1) (6 \dd \gamma+1) (\dd (3
            \gamma-2)+3) \\
  c_{11} =& 48 \gamma^2 (2 \dd \gamma+1)(6 \dd \gamma+1)
\end{align*}

\end{document}